\documentclass[12pt,a4paper,leqno]{article}

\usepackage{amsmath,amscd,amsfonts,amsthm,amssymb,dsfont,bbm,manfnt}
\usepackage{hyperref,url,color,appendix,eucal,comment,tabu,float}
\usepackage[usenames,dvipsnames]{xcolor}
\usepackage[nottoc]{tocbibind}
\usepackage{multicol}
\usepackage[OT2,T1]{fontenc}
\usepackage[utf8]{inputenc}
\usepackage[all]{xy}
\usepackage{lscape,rotating} 
\usepackage{graphicx}
\usepackage[nolabel]{showlabels}
\usepackage{ulem}

\parindent=0.2cm

\newenvironment{pf}
{\medskip\noindent {\it Proof.  }}
{\hfill\nobreak $\Box$ \par\bigbreak}

\setcounter{tocdepth}{1}

%macros
\newcommand{\isomo}{\overset{\sim}{\rightarrow}}

\newcommand{\tr}{\mathrm{tr}}

\newcommand{\SL}{\mathrm{SL}}
\newcommand{\ps}{\par \smallskip}

\newcommand{\Z}{\mathbb{Z}}
\newcommand{\Zp}{\Z_p}

\newcommand{\Q}{\mathbb{Q}}

\newcommand{\Qp}{\mathbb{Q}_p} 
\newcommand{\R}{\mathbb{R}}    
\newcommand{\C}{\mathbb{C}}

\newcommand{\AAA}{\mathbb{A}}

%numerotationtheoremes

\newtheorem{thm}[subsection]{Theorem}
\newtheorem{lemma}[subsection]{Lemma}

\newtheorem{remark}[subsection]{Remark}
\newtheorem{cor}[subsection]{Corollary}

\newtheorem{prop}[subsection]{Proposition}
\newtheorem{example}[subsection]{Example}

\newtheorem{thmintro}{Theorem}

\newtheorem{thmapp}{Theorem}
\newtheorem{lemmapp}[thmapp]{Lemma}
\newtheorem{remapp}[thmapp]{Remark}
\newtheorem{propapp}[thmapp]{Proposition}
\newtheorem{definition}[subsection]{Definition}
\newtheorem{fact}[subsection]{Fact}

%personalisation de la macro sousoussection

\usepackage{titlesec}

\titleformat{\section}{\bf \normalsize}{\arabic{section}.}{1 em}{}

\titleformat{\subsection}
{}% apparence commune au titre et au numXro
{\bf \arabic{section}.\arabic{subsection}.}% apparence du numXro
{0.5 em}% espacement numXro/texte
{\normalsize}% apparence du titre

\titleformat{\subsubsection}[runin]
{\small \bf}% apparence commune au titre et au numXro
{}% apparence du numXro
{}% espacement numXro/texte
{}% apparence du titre []
%autre possibilite : paragraphes non numerotes

\hypersetup{colorlinks=true,linkcolor=black,citecolor=black,urlcolor=Periwinkle,pdfborderstyle={/S/U/W 1}}

\makeatletter
\renewcommand\theequation{\arabic{section}.\arabic{equation}}
\@addtoreset{equation}{section}
\makeatother

\pagenumbering{arabic}

\AtEndDocument{
 \bigskip{
   \footnotesize%
    \textsc{Ga\"etan Chenevier}, CNRS, DMA, \'Ecole Normale Sup\'erieure, 45 rue d'Ulm, 75230 Paris Cedex 5, France,\,\,
    \texttt{gaetan.chenevier@cnrs.fr} \par
    \addvspace{\medskipamount}
    \textsc{Olivier Ta\"ibi}, CNRS, UMPA, \'Ecole Normale Sup\'erieure de Lyon, Site Monod, 46 all\'ee d'Italie, 69364 Lyon Cedex 07, France,\,\, 
    \texttt{olivier.taibi@ens-lyon.fr}
  }
}

\begin{document}

\title{Statistics for Kneser $p$-neighbors}

\author{Ga\"etan Chenevier\thanks{During this work, the authors have been supported by the C.N.R.S. and by the project ANR-19-CE40-0015-02 COLOSS.}
\\ {\small (with an appendix by Olivier Ta\"ibi)} }

%\subjclass[2010]{Primary 11F; Secondary 11F46, 11F55,  11F70, 11F72, 11F80, 11G40, 11H06, 11R39, 11Y55, 22C05.}

\maketitle

\begin{abstract} 
Let $L$ and $L'$ be two integral Euclidean lattices in the same genus. 
We give an asymptotic formula for the number of Kneser $p$-neighbors of $L$ which are isometric to $L'$,
when the prime $p$ goes to infinity. 
In the case $L$ is unimodular, 
and if we fix furthermore a subgroup $A \subset L$, 
we also give an asymptotic formula for the number of $p$-neighbors of $L$ containing $A$ and which are isomorphic to $L'$.
These statements explain numerical observations in \cite{unimodularhunting} and \cite{unimodularhunting2}.\par
In an Appendix, O. Ta\"ibi shows how to deduce from Arthur's results the existence of global parameters associated to automorphic representations of definite orthogonal groups over the rationals.  
\end{abstract}

{\footnotesize
\tableofcontents
}

\section{Introduction}

\subsection{The Minkowski-Siegel-Smith measure on a genus of integral lattices} Consider the standard Euclidean space $\R^n$, with inner product $x.y =\sum_i x_i y_i$. 
Recall that a lattice $L \subset \R^n$ is called {\it integral}
%\footnote{All the results of this paper even hold more generally for {\it rational} lattices, i.e. with $x.y \in \Q$ for all $x,y$ in $L$.} 
if we have $x.y \in \Z$ for all $x,y \in \Z$.
Such a lattice has a {\it genus} ${\rm Gen}(L)$, 
which is the set of all integral lattices $L'$ in $\R^n$ such that for all primes $p$
the symmetric bilinear spaces $L \otimes \Z_p$ and $L' \otimes \Z_p$, over the ring $\Z_p$ of $p$-adic integers, are isometric.
The Euclidean orthogonal group ${\rm O}(\R^n)$ naturally acts on ${\rm Gen}(L)$ with finitely many orbits, 
and we denote by
\begin{equation} \label{defXL} {\rm X}(L) = {\rm O}(\R^n) \backslash {\rm Gen}(L)\end{equation}  the set of isometry classes of lattices in ${\rm Gen}(L)$.\ps

Consider now the set $\mathcal{R}_n$ of {\it all} lattices in $\R^n$; this is a homogeneous space under ${\rm GL}_n(\R)$ 
with discrete stabilizers, so that $\mathcal{R}_n$ is a locally compact topological space in a natural way.
For any $L$ in $\mathcal{R}_n$, the orbit map ${\rm O}(\R^n) \rightarrow \mathcal{R}_n, g \mapsto g(L),$
is a topological covering with $|{\rm O}(L)|$-sheets onto its image, where 
${\rm O}(L) = \{ g \in {\rm O}(\R^n)\, \,|\, \, g(L)=L\}$ denotes
the isometry group of the lattice $L$, a compact discrete, hence finite, group.
Equip ${\rm O}(\R^n)$ with its volume $1$ Haar measure.
The orbit, or isometry class, of $L$ is thus endowed with an ${\rm O}(\R^n)$-invariant measure ${\rm m}$  
with total measure $${\rm m}({\rm O}(\R^n).L) = \frac{1}{|{\rm O}(L)|}.$$
The rational number $\frac{1}{|{\rm O}(L)|}$ will be called the {\it mass} of $L$. 
Assuming now $L$ is integral, and writing ${\rm Gen}(L)$ as a finite disjoint union of ${\rm O}(\R^n)$-orbits,
the construction above furnishes an ${\rm O}(\R^n)$-invariant measure ${\rm m}$ on 
${\rm Gen}(L)$, called the {\it Minkowski-Siegel-Smith} measure. 
A less artificial construction of the same measure can be obtained using a Haar measure 
on the adelic orthogonal group $G$ scheme of $L$ 
and the natural identification ${\rm Gen}(L) \isomo G(\Q) \backslash G(\AAA) / \prod_p G(\Z_p)$, 
with $\AAA$ the ad\`ele ring of $\Q$.
The {\it mass formula}, whose origin goes back to Smith, Minkowski and Siegel, gives a close and computable formula for
the rational number ${\rm m}({\rm Gen}(L))$. A  concrete (tautological) manifestation of these concepts is 
summarized in the following:\ps 

\begin{fact} \label{factarch}
In a genus $\mathcal{G}$ of integral Euclidean lattices, the probability of a random lattice to be isomorphic to $L$ is  $\frac{1/|{\rm O}(L)|}{{\rm m}(\mathcal{G})}$.
\end{fact}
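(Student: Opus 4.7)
The plan is to observe that this \emph{fact} is essentially tautological given the construction of the Minkowski--Siegel--Smith measure ${\rm m}$ recalled just above, and the only real content is to unpack what ``random'' means. First I would make explicit the underlying probability space: one normalizes the finite total-mass measure ${\rm m}$ on $\mathcal{G}$ to obtain the probability measure $\mu = \frac{1}{{\rm m}(\mathcal{G})}\, {\rm m}$, and ``a random lattice'' then means a sample from $\mu$. The event ``isomorphic to $L$'' corresponds to the ${\rm O}(\R^n)$-orbit ${\rm O}(\R^n).L \subset \mathcal{G}$.

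Next I would compute ${\rm m}\big({\rm O}(\R^n).L\big)$ directly from the construction. By the discussion of the orbit map ${\rm O}(\R^n) \to \mathcal{R}_n$, $g \mapsto g(L)$, this map is a topological covering onto its image with exactly $|{\rm O}(L)|$ sheets, and ${\rm O}(\R^n)$ carries its volume-$1$ Haar measure. Pushing this measure forward along the orbit map and dividing by the number of sheets gives the ${\rm O}(\R^n)$-invariant measure ${\rm m}$ on the orbit, with total mass exactly $1/|{\rm O}(L)|$. Summing over the finitely many orbits recovers the global measure ${\rm m}$ on ${\rm Gen}(L)$, so the restriction of ${\rm m}$ to the isometry class of $L$ indeed has total mass $1/|{\rm O}(L)|$.

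Finally I would conclude by a one-line Bayes/definition check:
\[
 \mathbb{P}_{\mu}\big(\text{lattice} \simeq L\big) \;=\; \mu\big({\rm O}(\R^n).L\big) \;=\; \frac{{\rm m}\big({\rm O}(\R^n).L\big)}{{\rm m}(\mathcal{G})} \;=\; \frac{1/|{\rm O}(L)|}{{\rm m}(\mathcal{G})}.
\]
There is no real obstacle here; the only subtle point, if any, is to make sure the reader understands that the measure ${\rm m}$ on ${\rm Gen}(L)$ has finite total mass (which is the content of the classical mass formula, already invoked in the paragraph above), so that normalizing to a probability measure is legitimate.
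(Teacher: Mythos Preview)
Your proposal is correct and matches the paper's treatment exactly: the paper does not give a separate proof of this fact at all, calling it a ``(tautological) manifestation'' of the constructions just introduced. You have simply written out the tautology in full, making explicit the probability space and the computation ${\rm m}({\rm O}(\R^n).L)=1/|{\rm O}(L)|$ already stated in the preceding paragraph.
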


\subsection{Kneser neighbors and their statistics}  Our first result will be a second manifestation of 
Fact \ref{factarch} in the setting of {\it Kneser neighbors}, that we first briefly review (see Sect. \ref{sect:pnei}).
We still assume that $L$ is an integral Euclidean lattice, and choose a prime $p$ not dividing $\det L$. 
An integral lattice $N  \subset \R^n$ is called a {\it $p$-neighbor} of $L$ if $L \cap N$ has index $p$ both in $L$ and in $N$. In this case, $L$ is a $p$-neighbor of $N$ as well.
This notion was introduced by Kneser originally in the case $p=2$.
As we will eventually let $p$ go to infinity, it will be harmless to rather assume $p$ is odd from now on. In this case\footnote{Indeed, for any such $p$-neighbor $N$ of $L$, 
we have $N [1/p]=L [1/p]$, and $N \otimes \Z_p \simeq L \otimes \Z_p$ by Lemma \ref{orbunimloc}.
} any $p$-neighbor of $L$ belongs to ${\rm Gen}(L)$.
We denote by $\mathcal{Nei}_p(L)$ the set of $p$-neighbors of $L$.
They are all easily described in terms of the set ${\rm C}_L(\Z/p)$ of isotropic $\Z/p$-lines of the non-degenerate symmetric bilinear space $L \otimes \Z/p$ over $\Z/p$ (a finite quadric). Indeed, the {\it line} map 
\begin{equation} \label{defell} \mathcal{\ell}:  \mathcal{Nei}_p(L) \longrightarrow {\rm C}_L(\Z/p), \, \, N \mapsto (pN+pL)/pL.\end{equation}
%as follows: for $N$ in $\mathcal{N}_p(L)$, the lattice $M:=L \cap N$ contains $pL$, and 
%$M/pL$ is the orthogonal of a unique $\Z/p$-line in $L \otimes \Z/p$, denoted $\ell(N)$. 
is {\it bijective}, and the lattice $N$ can be recovered from the isotropic line $l = \ell(N)$ by the following well-known construction.
For $l$ in ${\rm C}_L(\Z/p)$, set ${\rm M}_p(L; l) = \{ v \in L\, \, |\, \, v.l \equiv 0 \bmod p\}$ and choose $x \in L$ with 
$x.x \equiv 0 \bmod p^2$ whose image in $L \otimes \Z/p$ generates $l$. Such an $x$ always exists, and the lattice
\begin{equation} \label{voisdl} \,{\rm M}_p(L; l) \,+ \,\Z \,\frac{x}{p} \end{equation} does not depend on its choice: it is 
the unique $p$-neighbor $N$ of $L$ with $\ell(N)=l$, or equivalently, with $N \cap L = {\rm M}_p(L ; l)$. \ps

Since works of Kneser, Formula \eqref{voisdl} has been widely used as one of the main tools for constructing large quantities of new lattices in a fixed genus, from given ones and some primes. 
The cardinality of ${\rm C}_L(\Z/p)$ is easily computed: if $(\frac{a}{p})$ denotes the Legendre symbol of $a$ modulo $p$, we have for $n \geq 2$
{\small
\begin{equation}
\label{cardCLp}
|{\rm C}_L(\Z/p)| =  \left\{ \begin{array}{cc} 1+p+\cdots+p^{n-2} & \textrm{if $n$ is odd}, \\
1+p+\cdots+p^{n-2} + ( \frac{(-1)^{n/2} \det L}{p}) p^{n/2-1} & \textrm{if $n$ is even},
\end{array} \right.
\end{equation}
}\par
\noindent 
This cardinality only depends on $n$ and $\det L$, and for $n>2$ we have
 \begin{equation} \label{asymcl} |{\rm C}_L(\Z/p)| \sim p^{n-2}, \, \, \, \, {\rm for}\,\, p \rightarrow \infty. \end{equation}

%, which leads to the following definition.
%\begin{definition} Let $L$, $L'$ be integral Euclidean lattices, and $p$ an odd prime not dividing $\det L$. 
%We denote by ${\rm N}_p(L, L')$ the number of $p$-neighbors of $L$ which are isometric to $L'$. 
%\end{definiton}
An interesting question is to understand how many $p$-neighbors of a given $L$ lie in a given isometry class.
A first striking result in this direction, proved by Kneser as a consequence of his
{\it strong approximation theorem}, and originally applied by him to the case $p=2$ and the genera of unimodular lattices, 
is the following. Assume to simplify we have $n>2$ 
and that $\mathcal{G}={\rm Gen}(L)$ is a single {\it spinor genus} in the sense of Eichler 
(this is the most common situation). 
Consider the graph ${\rm K}_p(\mathcal{G})$ with set of vertices ${\rm X}(L)$ 
and with an arrow between $x$ and $y$ if $x$ has a $p$-neighbor isometric to $y$. 
Then ${\rm K}_p( \mathcal{G} )$ {\it is connected for any prime $p$ not dividing $\det L$}.
This theoretical result had decisive consequences for the classification of integral lattices of low
dimension or determinant, such as unimodular lattices (see {\it e.g.} \cite{scharlau} for a survey of such results). \ps

Another striking result, due to Hsia and J\"ochner \cite{HJ}, 
say still under the assumption $n>2$
and that $\mathcal{G}$ is a single spinor genus, asserts that for any 
$L'$ in ${\rm Gen}(L)$ there are infinitely many primes $p$ such that 
$L'$ is isomorphic to a $p$-neighbor of $L$.
(Their precise result is a bit more restrictive: see Corollary 4.1 {\it loc. cit.})
Our first main result is a more precise, quantitative, version of this result by Hsia and J\"ochner.\ps

\begin{thmintro}\label{thmstatgen} Let $L$ be an integral lattice in $\R^n$ with $n>2$. 
Assume $\mathcal{G}={\rm Gen}(L)$ is a single spinor genus, choose $L'$ in $\mathcal{G}$ and denote
by ${\rm N}_p(L,L')$ the number of $p$-neighbors of $L$ isometric to $L'$.
Then for $p \rightarrow \infty$ we have 
\begin{equation} \label{ramprop} \,\,\,\frac{ {\rm N}_p(L,L') }{| {\rm C}_L(\Z/p) |}  =  \frac{1/|{\rm O}(L')|}{{\rm m}(\mathcal{G})} + {\rm O}(\frac{1}{\sqrt{p}}),\end{equation} 
and we can even replace the $\frac{1}{\sqrt{p}}$ above by $\frac{1}{p}$ in the case $n>4$.
\end{thmintro}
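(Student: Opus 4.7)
The strategy is to interpret $N_p(L,L')$ as a matrix coefficient of a Hecke-type operator on the finite-dimensional ``genus space'' $V = \C^{{\rm X}(L)}$, equipped with the Minkowski-Siegel-Smith inner product $\langle f, g\rangle = \sum_{[M] \in {\rm X}(L)} f([M])\overline{g([M])}/|{\rm O}(M)|$, and to estimate its non-trivial eigenvalues via automorphic theory. Define the Kneser operator $T_p : V \to V$ by $(T_p f)([M]) = \sum_{N \in \mathcal{Nei}_p(M)} f([N])$; a direct check shows $T_p$ is self-adjoint, and by definition $N_p(L, L') = (T_p \delta_{[L']})([L])$, where $\delta_{[L']}$ denotes the indicator function of the isometry class of $L'$.

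By the bijection \eqref{defell}, the constant function $\mathbf{1}$ is an eigenvector of $T_p$ with eigenvalue $|{\rm C}_L(\Z/p)|$, and has squared norm ${\rm m}(\mathcal{G})$. Writing $\delta_{[L']} = \frac{1/|{\rm O}(L')|}{{\rm m}(\mathcal{G})}\mathbf{1} + f_\perp$ for the decomposition into the $\mathbf{1}$-component and its orthogonal complement gives
\[ N_p(L, L') = \frac{|{\rm C}_L(\Z/p)|/|{\rm O}(L')|}{{\rm m}(\mathcal{G})} + (T_p f_\perp)([L]). \]
Since $\|f_\perp\|$ and $\|\delta_{[L]}\|$ are bounded independently of $p$, Cauchy--Schwarz reduces the proof to bounding every eigenvalue of $T_p$ on the invariant subspace $\mathbf{1}^\perp$ by $O(p^{n-5/2})$ in general, and by $O(p^{n-3})$ when $n > 4$.

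This is where automorphic theory intervenes. Under the spinor genus hypothesis, strong approximation for the spin group identifies $V$ with a space of automorphic forms on the adelic definite orthogonal group ${\rm G}$ of $L$, invariant under the compact open subgroup $\prod_\ell {\rm O}(L \otimes \Z_\ell)$; $T_p$ then acts as a specific element of the unramified spherical Hecke algebra at $p$. Each $T_p$-eigenspace in $\mathbf{1}^\perp$ corresponds to a non-trivial global automorphic representation $\pi$ of ${\rm G}$, spherical at every place, and the eigenvalue $\lambda_\pi(p)$ is the value of the Satake transform of the Kneser operator at the Satake parameter of $\pi_p$. By Ta\"ibi's appendix, $\pi$ admits a global Arthur parameter $\psi$, which expresses the Satake parameters of $\pi_p$ as combinations of Satake parameters of tempered cuspidal representations of general linear groups, twisted by half-integral powers of $p$ coming from the ${\rm SL}_2$-factors of $\psi$.

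Combining this with the Ramanujan bound (Deligne--Lafforgue) for tempered cuspidal ${\rm GL}_N$-representations and with an explicit computation of the Kneser Satake transform yields $|\lambda_\pi(p)| \leq C\, p^{(n-2)/2}$ when $\pi_p$ is tempered, and $|\lambda_\pi(p)| \leq C\, p^{(n-1)/2}$ in general, the extra factor $p^{1/2}$ coming from the largest non-tempered ${\rm SL}_2$-factor allowed in a \emph{non-trivial} Arthur parameter of an orthogonal group. Combined with $|{\rm C}_L(\Z/p)| \sim p^{n-2}$, these give the claimed estimate: for $n \in \{3,4\}$ the ratio is $O(p^{-1/2})$, while for $n > 4$ it improves to $O(p^{-1})$. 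The main obstacle, addressed via Ta\"ibi's appendix, is to verify that the non-trivial Arthur parameters relevant here have all ${\rm SL}_2$-factors of dimension at most $2$, and to perform the explicit computation of the Satake transform of the Kneser operator on such parameters, so that the bound is genuinely uniform in $p$.
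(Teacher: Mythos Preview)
Your overall architecture coincides with the paper's: interpret ${\rm N}_p(L,L')$ as a matrix coefficient of ${\rm T}_p$ on ${\rm M}(K)$, split off the constant function, and bound the remaining eigenvalues via Arthur's classification (supplied here by Ta\"ibi's appendix). The setup and the reduction to an eigenvalue estimate are fine. The endgame, however, contains two genuine gaps.

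First, your assertion that ``the non-trivial Arthur parameters relevant here have all ${\rm SL}_2$-factors of dimension at most $2$'' is false, and Ta\"ibi's appendix proves nothing of the sort (it only furnishes the existence of a discrete parameter). Already for $n=6$ one may have $\psi=\pi[3]$ with $\pi$ an orthogonal cuspidal representation of ${\rm GL}_2$, and in general the maximal ${\rm SL}_2$-dimension $d$ on $\mathbf{1}^\perp$ can be as large as $\dim V-3$. Consequently your stated bound $|\lambda_\pi(p)|\leq C\,p^{(n-1)/2}$ does not follow; the true eigenvalues can be of size $\asymp p^{n-3}$. (Incidentally, your own bound $p^{(n-1)/2}$ divided by $|{\rm C}_L(\Z/p)|\sim p^{n-2}$ gives only $O(1)$ when $n=3$, not $O(p^{-1/2})$.)

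Second, your appeal to ``the Ramanujan bound (Deligne--Lafforgue) for tempered cuspidal ${\rm GL}_N$-representations'' is unjustified: Lafforgue's theorem is over function fields, and over $\Q$ the Ramanujan conjecture for ${\rm GL}_N$ with $N\geq 2$ remains open in general. The paper avoids this by using only the Jacquet--Shalika estimate $|\lambda_i(p)|<n_i\,p^{1/2}$, which already suffices whenever $d\leq \dim V-4$. For the remaining cases $d\in\{\dim V-3,\dim V-2,\dim V-1,\dim V\}$ one performs a short case analysis: either the parameter is a spinor character (hence excluded on $\mathbf{1}^\perp$ by the single-spinor-genus hypothesis and strong approximation), or the dimension constraint $n_j d\leq {\rm n}_V$ together with the parity condition ${\rm s}(\pi_j)=(-1)^{\dim V+d-1}$ forces $n_j\in\{1,2\}$. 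For those small $n_j$ the Ramanujan bound \emph{is} known (trivially for ${\rm GL}_1$; by Eichler--Shimura--Weil for weight-$2$ holomorphic forms on ${\rm GL}_2$; and via automorphic induction for orthogonal cuspidal ${\rm GL}_2$), and this is what yields the sharper estimate. That case analysis is the actual content of the proof and is missing from your sketch.
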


In other words, for $L,L'$ in a same genus $\mathcal{G}$, and when the prime $p$ grows, 
the probability for a $p$-neighbors of $L$ to be isometric to $L'$ is proportional to the mass of $L'$. In particular, this probability does not depend on the choice of $L$ in $\mathcal{G}$. \ps

The most general version of Theorem~\ref{thmstatgen} that we prove 
actually applies to arbitrary genera, and even to genera of lattices equipped with level structures: see Theorem~\ref{thmmain}. 
For instance, if we do not assume that $\mathcal{G}={\rm Gen}(L)$ is a single spinor genus, then the same theorem holds 
with ${\rm m}(\mathcal{G})$ replaced by the common mass of the spinor genera $\mathcal{G}'$ in ${\rm Gen}(L)$, 
assuming we restrict in the asymptotics to primes $p$ belonging to a certain explicit union of arithmetic progressions (with Dirichlet density $\frac{{\rm m}(\mathcal{G}')}{{\rm m}(\mathcal{G})}$): 
see Remark~\ref{spinordirichlet}. In the case $n=2$, and for $p \nmid 2\det L$, then $L$ has 
either $0$ or $2$ $p$-neighbors, whose proper isometry classes are easily described using 
the theory of binary forms and class field theory: we won't say more about this fairly classical case. The case $n=1$ is irrelevant: we have ${\rm Gen}(L)=\{L\}$ and $L$ has no $p$-neighbor.\ps

\subsection{Remarks and proofs}
\label{subsect:rempfintro}

As far as we know, Theorem~\ref{thmstatgen} was only known before in the case $n=3$, 
and proved by Sch\"ulze-Pillot \cite[Corollary p. 120]{schulzepillot}. 
Our own inspiration comes from our work with Lannes \cite{chlannes}, 
in which we gave an exact formula for ${\rm N}_p(L,L')$ 
in the special case $L$ and $L'$ are even unimodular lattices of rank $\leq 24$. 
The asymptotics \eqref{ramprop} easily followed: see Scholium 10.2.3 {\it loc. cit.}\ps
 
Although it is unreasonable to ask for general exact formulas for ${\rm N}_p(L,L')$ 
in the case of arbitrary genera, it was quite natural to expect that 
Formula~\eqref{ramprop} should hold in general.
Indeed, after some elementary reductions, this formula is equivalent to a spectral gap property for 
the Hecke operator of Kneser $p$-neighbors acting on the space of all (automorphic!) 
functions ${\rm X}(L) \rightarrow \C$.
It is thus related to the generalized Ramanujan conjecture for automorphic representations of orthogonal groups,
in a way reminiscent of the classical constructions of Ramanujan graphs by Lubotzky-Phillips-Sarnak \cite{lps}, Margulis \cite{mar} or Mestre-Oesterl\'e \cite{mes} (isogeny graphs of supersingular elliptic curves). In the context above, the dominant eigenvalues correspond to spinor characters;
we review them in Sect. \ref{prelimspin} in the generality needed for the application to Theorem~\ref{secmainthmintro} below. \ps

As explained in \S \ref{sect:pf}, and quite naturally, the aforementioned spectral gap is 
a simple consequence of Arthur's endoscopic classification 
of automorphic representations of classical groups \cite{arthur} 
combined with estimates toward the generalized Ramanujan conjecture for general linear groups. 
In the end, the Jacquet-Shalika estimates for ${\rm GL}_m$ (any $m$) turn out to be enough, 
and we only need the full Ramanujan conjecture for ${\rm GL}_1$ (easy) 
and for classical holomorphic modular forms of weight $2$ for ${\rm GL}_2$ (Eichler, Shimura, Weil).
The basic results we need about Arthur's theory for the orthogonal group of positive definite rational quadratic forms 
were actually missing form the literature. 
In an appendix by Ta\"ibi, he explains how to deduce them from the results of Arthur \cite{arthur}, using similar arguments as in \cite{taibi_cpctmult}. \ps
Following a suggestion of Aurel Page, we also give in \S \ref{sect:pf} a second proof 
for the above spectral gap, in the case $n\geq 5$, which does not use Arthur's theory but instead
the (purely local) uniform estimates for matrix coefficients of unitary representations
given by Oh in \cite{oh}, in the spirit of the work of Clozel, Oh and Ullmo \cite{cou}
on equidistribution of Hecke orbits.  This second method, although less intuitive to us, 
is much less demanding in theory and leads to the same asymptotics. 
On the other hand, the first method makes it clear that the given asymptotics are optimal, 
and paves the way for the study of finer asymptotic expansions: we thus decided to give both arguments. 
For the sake of completeness, we also provide an alternative proof in the cases $n=3,4$ 
relying on the Jacquet-Langlands correspondence; 
for $n=4$ we even show that we may replace $\frac{1}{\sqrt{p}}$ by $\frac{1}{p}$ 
in the asymptotics provided $\det L$ is not a square (see Corollary~\ref{addendumdim4}).
\ps

As already said, the given asymptotics cannot be improved in general. 
For instance, if $L$ and $L'$ are Niemeier lattices with respective number of roots $24h$ and $24h'$, 
and if $\mathcal{G}_{24}$ denotes the genus of all Niemeier lattices,
we have {\small
\begin{equation}
\label{npllniemeier}
\frac{ {\rm N}_p(L,L') }{ |{\rm C}_L(\Z/p)|}  \,= \,\frac{ 1/|{\rm O}(L')| }{ {\rm m}(\mathcal{G}_{24}) }(\,1\,+\,  \frac{37092156523}{34673184000} \,(h-\frac{2730}{691})(h'-\frac{2730}{691})\, \frac{1}{p}) \,+\, {\rm O}(\frac{1}{p^2})
\end{equation}
}
\par \noindent by \cite[p.  317]{chlannes}. As explained {\it loc. cit.} the coefficient of $1/p$ in \eqref{npllniemeier} may be explained by studying the theta series of Niemeier lattices and a certain related eigenfunction on ${\rm O}(\R^{24})\backslash \mathcal{G}_{24}$ determined by Nebe and Venkov. This is a general phenomenon that could certainly be studied and developed further using higher genus theta series as in \cite{chlannes}, but that we do not pursue here.
Incidentally, we see in \eqref{npllniemeier} that the coefficient of $1/p$ can be positive or negative. 
In other examples, this coefficient can vanish as well. \ps
 
 \subsection{Biased statistics for unimodular lattices}
 \label{subsect:biasedintro} 
 
Theorem~\ref{thmstatgen}, applied to the genera of odd or even unimodular lattices, provide 
a theoretical explanation for some methods used in our forthcoming works \cite{unimodularhunting,unimodularhunting2},
in which we classify the isometry classes of unimodular lattices of rank $26, 27$ and $28$, giving
representatives for each of them as a cyclic $d$-neighbor of the standard lattice $\Z^n$.
In order to discover some unimodular lattices with very small mass, the rarest ones from the point of view of 
Theorem~\ref{thmstatgen}, we were led to {\it biase} the statistics by 
restricting to neighbors containing some fixed sublattice of rank $<n$. 
Our main motivation for writing this paper was to provide a mathematical 
justification of our experimental observations there, and to precisely compute the resulting biased probabilities.\ps

So we fix now an integral unimodular lattice $L \subset \R^n$, as well as a saturated subgroup $A \subset L$ (see \S \ref{sect:biased}).
To make statements as simple as possible in this introduction, we also assume $L$ is even.
For $p$ a prime, we are interested in the set $\mathcal{N}_p^A(L)$ of $p$-neighbors $N$ of $L$ 
with $A \subset N$. It is easily seen that for $p \, \nmid \, \det A$, the line map identifies 
$\mathcal{N}_p^A(L)$ with ${\rm C}_{A^\perp}(\Z/p)$, where $A^\perp$ denotes the orthogonal of $A$ in $L$ (see Lemma~\ref{linemapA}).
For $L'$ another unimodular integral lattice in $\R^n$, we 
are interested in the number ${\rm N}_p^A(L,L')$ of elements $N \in \mathcal{N}_p^A(L)$ which are isometric to $L'$.  \ps

For any integral lattice $U$, we denote by ${\rm emb}(A,U)$ the set of isometric embeddings $e : A \hookrightarrow U$ 
such that $e(A)$ is a saturated subgroup of $U$.  This is a finite set, whose cardinality only depends on the isomorphism class of $U$.
We set ${\rm m}_n^{\rm even}(A) \,=\, \sum_U\, |{\rm emb}(U,A)|/|{\rm O}(U)|$, 
the sum being over the isomorphism classes of rank $n$ even unimodular lattices $U$. 
We denote by ${\rm g}(G)$ the minimal number of generators of the finite abelian group $G$, 
and set ${\rm res}\, A\,= A^\sharp/A$ (see \S \ref{sect:notations} (ii)). 
We always have ${\rm rank}\, A \,+\, {\rm g}\,({\rm res}\, A) \leq n$.
%The quantity ${\rm m}_m^{\rm even}(A)$ above has a natural interpretation as the mass of a groupoid that we will recall in}. \ps\ps

\begin{thmintro} \label{secmainthmintro} Let $L$ be an even unimodular lattice of rank $n$ and $A$ a saturated 
subgroup of $L$. Assume ${\rm rank}\, A \,+\, {\rm g}({\rm res}\, A)\,< n-1$.
Then  
\begin{equation} 
\label{biasedstatint} 
\frac{{\rm N}_p^A(L,L')}{|{\rm C}_{A^\perp}(\Z/p)|} \,= \,\frac{ |{\rm emb}(A,L')|/|{\rm O}(L')|}{{\rm m}_n^{\rm even}(A)} \,+\, {\rm O}(\frac{1}{\sqrt{p}})\,\,
\, \,\,\,\,{\rm for}\, \,\,p \rightarrow \infty,
\end{equation}
and we can even replace the $\frac{1}{\sqrt{p}}$ above by $\frac{1}{p}$ in the case ${\rm rank}\, A \leq n-5$.
\end{thmintro}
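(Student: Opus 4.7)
The plan is to reduce Theorem~\ref{secmainthmintro} to the level-structure version of Theorem~\ref{thmstatgen}, namely Theorem~\ref{thmmain}, applied to an auxiliary genus of lattices carrying a distinguished saturated copy of $A$.

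\textbf{Step 1 (geometric reduction).} Set $M = A^\perp$, a saturated sublattice of $L$ of rank $n - {\rm rank}\,A$. Since $p \nmid \det A$, the index $[L : A \oplus M]$ is coprime to $p$, so $L \otimes \Z_p = A \otimes \Z_p \oplus M \otimes \Z_p$ with both summands unimodular. Using~\eqref{voisdl}, a $p$-neighbor $N = {\rm M}_p(L;\ell) + \Z\, x/p$ of $L$ contains $A$ if and only if $\ell \subset A^\perp \otimes \Z/p$; choosing $x$ inside $A^\perp$, the identity ${\rm M}_p(L;\ell) \cap A^\perp = {\rm M}_p(M;\ell)$ shows that $M' := A^{\perp,N}$ is precisely the $p$-neighbor of $M$ attached to $\ell \in {\rm C}_M(\Z/p)$. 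This yields a bijection
$$\mathcal{N}_p^A(L) \longrightarrow \mathcal{N}_p(M), \quad N \longmapsto A^{\perp,N},$$
whose inverse sends $M'$ to the unique even unimodular over-lattice of $A \oplus M'$ that agrees with $L$ at every prime $q \neq p$.

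\textbf{Step 2 (groupoid of embedded lattices).} Consider the groupoid $\mathcal{X}_A$ of pairs $(U,e)$ with $U$ an even unimodular lattice of rank $n$ in $\R^n$ and $e : A \hookrightarrow U$ a saturated embedding, the morphisms being isometries intertwining $e_1$ and $e_2$. The automorphism group of $(U,e)$ is the stabilizer ${\rm O}(U)_e$ of $e$ in ${\rm O}(U)$, and by orbit-stabilizer applied fiberwise to the forgetful map $(U,e) \mapsto U$,
$${\rm m}(\mathcal{X}_A) \,=\, \sum_{[(U,e)]} \frac{1}{|{\rm O}(U)_e|} \,=\, \sum_{[U]} \frac{|{\rm emb}(A,U)|}{|{\rm O}(U)|} \,=\, {\rm m}_n^{\rm even}(A).$$
By Step~1, each $(U,e) \in \mathcal{X}_A$ admits exactly $|{\rm C}_M(\Z/p)|$ ``$p$-neighbors'' in $\mathcal{X}_A$, namely the pairs $(N,e)$ with $N$ a $p$-neighbor of $U$ containing $e(A)$; this endows $\mathcal{X}_A$ with a Hecke operator fitting the framework of Theorem~\ref{thmmain}, whose underlying genus is that of $M$.

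\textbf{Step 3 (application and hypotheses).} For a fixed even unimodular $L'$, each $N \in \mathcal{N}_p^A(L)$ isometric to $L'$ determines, via any isometry $\sigma : N \isomo L'$, a well-defined orbit $[\sigma|_A] \in {\rm emb}(A,L')/{\rm O}(L')$ (independent of $\sigma$ by ${\rm O}(L')$-conjugacy), giving the partition
$${\rm N}_p^A(L,L') \,=\, \sum_{[e]\in {\rm emb}(A,L')/{\rm O}(L')} \#\bigl\{(L,{\rm id}_A)\text{-}p\text{-neighbors in } \mathcal{X}_A \text{ of class } [(L',e)]\bigr\}.$$
Theorem~\ref{thmmain} applied to $\mathcal{X}_A$ then gives, for each orbit $[e]$,
$$\frac{\#\bigl\{(L,{\rm id}_A)\text{-}p\text{-neighbors of class } [(L',e)]\bigr\}}{|{\rm C}_M(\Z/p)|} \,=\, \frac{1/|{\rm O}(L')_e|}{{\rm m}_n^{\rm even}(A)} \,+\, {\rm O}(1/\sqrt{p}),$$
and summing over $[e]$ with $\sum_{[e]} 1/|{\rm O}(L')_e| = |{\rm emb}(A,L')|/|{\rm O}(L')|$ (orbit-stabilizer) yields~\eqref{biasedstatint}. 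The main technical step is verifying the hypotheses of Theorem~\ref{thmmain}: since $L$ is unimodular, the gluing $L/(A \oplus M) \subset {\rm res}\,A \oplus {\rm res}\,M$ is the graph of an anti-isometry ${\rm res}\,A \isomo {\rm res}\,M$, so ${\rm g}({\rm res}\,M) = {\rm g}({\rm res}\,A)$, and the assumption ${\rm rank}\,A + {\rm g}({\rm res}\,A) < n-1$ becomes ${\rm rank}\,M > {\rm g}({\rm res}\,M) + 1$. By strong approximation for spin groups (Eichler), this classical condition ensures that the genus of $M$ (hence of $\mathcal{X}_A$) forms a single spinor genus of rank $\geq 3$, so Theorem~\ref{thmmain} applies with the announced main term. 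The improvement to ${\rm O}(1/p)$ under ${\rm rank}\,A \leq n-5$ corresponds to ${\rm rank}\,M \geq 5$, for which Theorem~\ref{thmstatgen} provides the sharper error.
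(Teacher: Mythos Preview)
Your overall strategy---pass to $M=A^\perp$, set up the groupoid $\mathcal{X}_A$ of pairs $(U,e)$, and apply Theorem~\ref{thmmain}---is exactly the paper's. Step~1 is correct and matches Lemmas~\ref{linemapA} and~\ref{idenei}, and the mass computation in Step~2 is the paper's Lemma~\ref{masslemma}.

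The gap is in the sentence ``this endows $\mathcal{X}_A$ with a Hecke operator fitting the framework of Theorem~\ref{thmmain}, whose underlying genus is that of $M$.'' Theorem~\ref{thmmain} applies to a class set ${\rm X}(K)$ for a specific compact open $K\subset {\rm O}_W(\AAA_f)$, and you have not said which $K$ realizes $\pi_0(\mathcal{X}_A)$. The plain genus $K=\prod_p {\rm O}(M_p)$ does \emph{not} work: two pairs $(U,e)$, $(U',e')$ can have isometric orthogonal complements yet be non-isomorphic in $\mathcal{X}_A$, because the glueing isometries $\sigma,\sigma'\in{\rm Isom}(-{\rm res}\,A,{\rm res}\,M')$ may differ by an element of ${\rm O}({\rm res}\,M')$ not in the image of ${\rm O}(M')$. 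The correct choice is the \emph{inertial} level $K=\prod_p {\rm I}(M_p)$ with ${\rm I}(M_p)=\ker({\rm O}(M_p)\to{\rm O}({\rm res}\,M_p))$, and the paper proves (Lemma~\ref{equivphi}) that ${\rm X}(K)\simeq\pi_0(\mathcal{X}_A)$ using two nontrivial inputs of Nikulin: the discriminant quadratic form of an even lattice determines its genus, and ${\rm O}(M_p)\to{\rm O}({\rm res}\,M_p)$ is surjective onto the quadratic isometries. Without these you cannot conclude that every isomorphism class in $\mathcal{X}_A$ is hit, nor that the stabilizers $|{\rm O}(U)_e|$ coincide with the $|\Gamma_x|$ appearing in Theorem~\ref{thmmain}.

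Correspondingly, your spinor-genus check is for the wrong $K$. What must be shown is $|\Sigma(K)|=1$ for the inertial $K$, i.e.\ that for every prime $p$ the subgroup ${\rm I}(M_p)\cap{\rm SO}(M_p)$ already has spinor norm $\supset\Z_p^\times$ and $\det {\rm I}(M_p)=\{\pm 1\}$. This is a genuine local computation (the paper's Lemma~\ref{inertialspin}, splitting off a unimodular summand of rank $\geq 2$ from $M_p$), and it is here that the hypothesis ${\rm rank}\,M\geq {\rm g}({\rm res}\,M)+2$ is used; the phrase ``by strong approximation for spin groups (Eichler)'' does not by itself give this.
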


Note that neither $|{\rm C}_{A^\perp}(\Z/p)|$, nor the right-hand side of \eqref{biasedstatint}, depend on the choice of $L$ containing $A$. We refer to Theorem~\ref{secmainthmodd} for a more general statement, which applies to odd unimodular lattices as well. Our proof is a combination of Theorem~\ref{thmmain} applied to the {\it inertial genus} of the lattice $A^\perp$, and of
the classical {\it glueing method}. 
Results of Nikulin \cite{nikulin}, namely the fact that the discriminant bilinear form of an Euclidean integral lattice determines its genus, as well as properties of the isometry groups of $p$-adic lattices, play a decisive role as well in the argument: see Sect. \ref{sect:biased}.\ps

We end this introduction by a simple illustration of Theorem \ref{secmainthmintro} 
to the genus $\mathcal{G}_{16}$ of even unimodular lattices in $\R^{16}$. 
As is well-known, it has two isometry classes, 
namely that of the lattices $L_1={\rm E}_{16}$ and $L_2={\rm E}_8 \oplus {\rm E}_8$, 
with respective root systems of type ${\bf D}_{16}$ and ${\bf E}_8 \coprod {\bf E}_8$. 
As an example, assume that $A$ is a root lattice with irreducible root system $R$ 
and having a saturated embedding in both $L_1$ and $L_2$: 
the possible $R$ are listed in the table below. 
The quantities $|{\rm emb}(A,L_i)|$ easily follow from Table 4 in \cite{king}.
\footnote{Note that the embeddings of ${\rm A}_7$ in ${\rm E}_8$ with orthogonal ${\rm A}_1$ 
do not have a saturated image.
} 
Theorem~\ref{secmainthmintro} applies, 
since we have ${\rm g}({\rm res}\, A) \leq 2$ in all cases, 
and shows, for $L \in \mathcal{G}_{16}$ and $p \rightarrow \infty$,
\begin{equation} 
\label{biasedX16}
\frac{{\rm N}_p^{A}(L,{\rm E}_8 \oplus {\rm E}_8)}{|{\rm C}_{A^\perp}(\Z/p)|}\, =\, \mu(R) \,+\, {\rm O}(\,\frac{1}{p}\,),
\end{equation}
where the constant $\mu(R)$ is given by the table below. The third column fits the fact that $L_1$ and $L_2$ have the same Siegel theta series of genus $1,2$ and $3$ (Witt, Igusa, Kneser).
%This constant is maximal for $R \simeq {\bf D}_4$ or ${\bf D}_5$, and minimal for $R \simeq {\bf A}_7$. 
Several other examples will be discussed in \cite{unimodularhunting}.

\begin{table}[H]
{\scriptsize
\renewcommand{\arraystretch}{1.8} \medskip
 \begin{center}
 \begin{tabular}{c|c|c|c|c|c|c|c|c|c|c}
$R$ & $\emptyset$ & ${\bf A}_r$ ($r\leq 3$) & ${\bf A}_4$ & ${\bf D}_4$ & ${\bf A}_5$ & ${\bf D}_5$ &  ${\bf A}_6$ & ${\bf D}_6$ & ${\bf A}_7$ & ${\bf D}_7$ \\
\hline   
$\mu(R)$ &  $286/691$ &  $286/691$  & $22/67$ & $22/31$ & $2/11$ & $22/31$ & $1/16$ & $2/5$ & $1/136$ & $2/17$ \\
\end{tabular} 
\label{tab:alphaR}
\end{center}
} 
\end{table}

\ps
{\sc Acknowledgements:} We are grateful to Olivier Ta\"ibi for several useful discussions and for writing the appendix to this paper. We are also grateful to Aurel Page for his remarks about the first version of this paper and for pointing out to us the work \cite{oh}.

\section{General notations and conventions}\label{sect:notations}

\ps\ps
In this paper, group actions will be on the left. We denote by $|X|$ the cardinality of the set $X$. 
For $n\geq 1$ an integer, we denote by $\Z/n$ the cyclic group $\Z/n\Z$ and 
by ${\rm S}_n$ the symmetric group on $\{1,\dots,n\}$. If $V$ if a finite dimensional vector-space
over a field $K$, and if $A$ is a subring of $K$, an $A$-{\it lattice} in $V$ is an $A$-submodule 
$L \subset V$ generated by a basis of $V$. 
\ps\ps

(i) ({\it Euclidean lattices})  If $V$ is an Euclidean space, we denote by $x \cdot y$ its inner product. A lattice in $V$ is a
$\Z$-lattice, or equivalently, a discrete subgroup $L$ with finite covolume, denoted ${\rm covol}\, L$. Its {\it dual lattice} is the lattice $L^\sharp$ defined as $\{ v \in V\, \, |\, \, v \cdot x \in \Z ,\,\, \, \forall x \in L\}$.  The lattice $L$ is {\it integral} if we have $L.L \subset \Z$, {\it i.e.} $L \subset L^\sharp$.  An integral lattice is called {\it even} if we have $x \cdot x \in 2\Z$ for all $x \in L$, and {\it odd} otherwise. The orthogonal group of $L$ is the finite group ${\rm O}(L)=\{ \gamma \in {\rm O}(V), \, \, \gamma(L)=L\}$.  \ps\ps

(ii) ({\it Residue}) Assume $L$ is an integral lattice in the Euclidean space $V$. The finite abelian group ${\rm res}\, L := L^\sharp/L$, called the {\it discriminant group} in \cite{nikulin} or the {\it glue group} in \cite{conwaysloane}, is equipped with a non-degenerate $\Q/\Z$-valued symmetric bilinear form ${\rm b}(x,y) = x.y \bmod \Z$. When $L$ is even, there is also a natural quadratic form ${\rm q} : {\rm res}\, L \rightarrow \Q/\Z$ defined by ${\rm q}(x) \equiv \frac{x.x}{2} \bmod \Z$, and which satisfies $ {\rm q}(x+y)={\rm q}(x)+{\rm q}(y)+{\rm b}(x,y)$ for all $x,y \in {\rm res}\, L$. We have $({\rm covol}\, L)^2 \,=\,  |{\rm res}\, L|$. This integer is also the {\it determinant} $\det L$ of the {\it Gram matrix} ${\rm Gram} (e) = (e_i \cdot e_j)_{1 \leq i,j \leq n}$ of any $\Z$-basis $e=(e_1,\dots,e_n)$ of $L$. \ps

(iii) ({\it Unimodular overlattices}) Fix $L \subset V$ as in (ii). A subgroup $I \subset {\rm res}\, L$ is called {\it isotropic}, if we have ${\rm b}(x,y) =0$ for all $x,y \in I$, and a {\it Lagrangian} if we have furthermore $|I|^2=|{\rm res}\, L|$. The map $M \mapsto M/L$ is a bijection between the set of integral lattices containing $L$ and the set of isotropic subgroups of ${\rm res}\, L$. In this bijection, $M/L$ is a Lagrangian if and only if $M$ is unimodular. In the case $L$ even, $M$ is even if and only if $I:=M/L$ is a {\it quadratic Lagrangian}, that is satisfies ${\rm q}(I)=0$. \ps

(iv) ({\it Localization}) Most considerations in (i), (ii) and (iii) have a local analogue, 
in which $V$ is replaced by a non-degenerate quadratic space $V_p$ (see below) 
over the field $\Q_p$ of $p$-adic numbers, 
and $L_p$ is a $\Z_p$-lattice in $V_p$.
Then $L_p^\sharp = \{ v \in L_p \, |\, v.L_p \subset \Z_p\}$ is a $\Z_p$-lattice, 
and when $L_p$ is integral, {\it i.e.} $L_p \subset L_p^\sharp$, 
we define ${\rm res}\, L_p = L_p^\sharp/L_p$ as above. 
This is a finite abelian $p$-group equipped with a natural non-degenerate $\Q_p/\Z_p$-valued symmetric bilinear form ${\rm b}_p$. 
In the case $p=2$ and ${\rm q}(L_2) \subset \Z_2$ ({\it i.e.} $L_2$ is {\it even}), 
${\rm res} \,L_2$ also has a natural $\Q_2/\Z_2$-valued quadratic form ${\rm q}_2$ inducing ${\rm b}_2$. 
We denote by ${\rm O}({\rm res}\, L_p)$ the automorphism group of the bilinear space $({\rm res}\, L_p,{\rm b}_p)$,
and set ${\rm I}(L_p):=\{ \gamma \in {\rm O}(L_p)\, |\, {\rm res}\, \gamma = 1\}$.
We have a natural exact sequence
\begin{equation}\label{resp} 1 \longrightarrow {\rm I}(L_p) \longrightarrow {\rm O}(L_p) \overset{{\rm res}}{\longrightarrow} {\rm O}({\rm res}\, L_p). \end{equation}
Assume now $V_p=L\otimes \Q_p$ for some integral Euclidean lattice $L$, and $L_p=L \otimes \Z_p$. The natural isomorphism $\Q/\Z \simeq \oplus_p \Q_p/\Z_p$ allows to identify 
${\rm res}\, L_p$ canonically with the $p$-Sylow subgroup of ${\rm res}\, L$, and $({\rm res}\, L,{\rm b})$ with the orthogonal direct sum of the $({\rm res}\, L_p,{\rm b}_p)$ : see {\it e.g.} \S 1.7 in \cite{nikulin} for more about these classical constructions.\ps

(v) ({\it Quadratic spaces}) A quadratic space over a commutative ring $k$ is a free $k$-module $V$ of finite rank equipped with a quadratic form, always denoted by ${\rm q}:  V \rightarrow k$. We often denote by $x.y={\rm q}(x+y)-{\rm q}(x)-{\rm q}(y)$ the associated symmetric $k$-bilinear form on $V$,
and say that $V$ is non-degenerate if this form is (so $\dim V$ is even if $k$ has characteristic $2$). 
We denote by $\det V$ the class in $k/k^{\times,2}$ of the determinant of any Gram matrix of $V$; 
so $V$ is non-degenerate if and only if $\det V \subset k^\times$.\ps

(vi) ({\it Orthogonal and Spin groups})
We denote respectively by ${\rm O}(V)$, ${\rm SO}(V)$ and ${\rm Spin}(V)$ the orthogonal group, the special orthogonal group and the spin group of the quadratic space $V$. Recall that we have ${\rm O}(V)=\{ g \in {\rm GL}(V)\, \,|\,\, {\rm q}\circ g = {\rm q} \}$, and unless $\dim V$ is even and $2$ is a zero divisor in $k$, a case that can mostly be ignored in this paper, the group ${\rm SO}(V)$ is defined as the kernel of the determinant morphism $\det : {\rm O}(V) \rightarrow \mu_2(k)$, with $\mu_2(k)=\{ \lambda \in k^\times \, \, |\, \, \lambda^2=1\}$ (see \cite[\S 5]{knuss} or \cite[\S 2.1]{chlannes}). 
By the Clifford algebra construction, we have a natural morphism $\mu : {\rm Spin}(V) \rightarrow {\rm SO}(V)$, and assuming $V$ is non-degenerate, 
a {\it spinor norm} morphism ${\rm sn} : {\rm O}(V) \rightarrow k^\times/k^{\times,2}$, such that the sequence 
\begin{equation}
\label{spinornorm} 
1 \longrightarrow \mu_2(k) \rightarrow {\rm Spin}(V) \overset{\mu}{\longrightarrow} {\rm SO}(V) \overset{{\rm sn}}{\longrightarrow} k^\times/k^{\times,2}
\end{equation}
is exact (see \cite[Thm. 6.2.6]{knuss}). Recall that if $s \in {\rm O}(V)$ denotes the orthogonal symmetry about an element $v \in V$ with ${\rm q}(v) \in k^\times$, we have ${\rm sn}(s) \equiv {\rm q}(v) \bmod k^{\times,2}$. It follows that ${\rm sn}$ is surjective when ${\rm q}(V)=k$, {\it e.g.} when $V$ contains a hyperbolic plane, or when $k$ is a field and $V$ is isotropic. 

\ps
(vii) ({\it Algebraic groups}) For any quadratic space $V$ over $k$ and any ring morphism $k \rightarrow k'$, we have a natural quadratic space $V \otimes k'$ obtained by scalar extension.  The groups ${\rm O}(V)$, ${\rm SO}(V)$ and ${\rm Spin}(V)$ are then the $k$-rational points of natural linear algebraic groups schemes over $k$ that we denote by ${\rm O}_V$, ${\rm SO}_V$ and ${\rm Spin}_V$. Also, the morphisms $\mu$, $\det$ and ${\rm sn}$ are functorial in $k$ whenever defined 
(in particular $\mu$ and $\det$ are group scheme morphisms). \ps

\section{Review and preliminaries on spinor characters}
\label{prelimspin}
  
	If $V$ is a quadratic space over $\Q$, we denote by $V_\infty$ the real quadratic space $V \otimes \R$, and for any prime $p$, we denote by $V_p$ the quadratic space $V \otimes \Q_p$ over the field $\Q_p$ of $p$-adic integers. 

\begin{lemma}\label{spinornormSOlemma} Assume $V$ is a non-degenerate quadratic space over $k$ with $k=\Q_p$ or $k=\Q$, and $\dim V \geq 3$. Then the spinor norm morphism ${\rm sn}$ in \eqref{spinornorm} is surjective, unless we have $k=\Q$ and $V_\infty$ is anisotropic, in which case its image is $\Q_{>0}/\Q^{\times,2}$. 
\end{lemma}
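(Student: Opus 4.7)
The plan is to reduce, via Cartan--Dieudonn\'e, to computing the subgroup $H \subset k^\times/k^{\times,2}$ generated by the nonzero values of ${\rm q}$ on $V$, then handle the local case by direct inspection and the global case by Hasse--Minkowski together with (in dimension $3$) the theory of quaternion algebras. By Cartan--Dieudonn\'e every element of ${\rm O}(V)$ is a product of reflections $s_v$ for $v \in V$ anisotropic; combined with the formula ${\rm sn}(s_v) \equiv {\rm q}(v) \bmod k^{\times,2}$ recalled in (vi), this identifies the image of ${\rm sn}$ on ${\rm O}(V)$ with $H$. A standard verification using $\dim V \geq 3$ (for instance, exhibiting two linearly independent anisotropic vectors with equal squared-norm inside a suitable non-degenerate $2$-plane) shows that the image on ${\rm SO}(V)$ is again $H$, so it suffices to determine $H$.

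In the local case $k = \Q_p$: if $V$ is isotropic it contains a hyperbolic plane, hence ${\rm q}(V) = \Q_p$ and $H = \Q_p^\times/\Q_p^{\times,2}$. Otherwise $V$ is anisotropic, which forces $\dim V \in \{3,4\}$, since any non-degenerate form of dimension $\geq 5$ over $\Q_p$ is isotropic. Up to a scalar, $V$ is then the reduced-norm form (dim 4), respectively its restriction to pure-quaternion elements (dim 3), of the unique quaternion division algebra $D$ over $\Q_p$. In dimension $4$ the reduced norm $D^\times \to \Q_p^\times$ is surjective, giving $H = \Q_p^\times/\Q_p^{\times,2}$. In dimension $3$, using $v^2 = -{\rm q}(v)$ for pure quaternions $v$, the set ${\rm q}(V \setminus \{0\})$ equals the set of $\alpha \in \Q_p^\times$ such that $\Q_p[\sqrt{-\alpha}]$ is a field (and then it automatically embeds in $D$), i.e.\ $-\alpha \notin \Q_p^{\times,2}$; this omits exactly one square class, but a direct case check in $\Q_p^\times/\Q_p^{\times,2}$ shows the remaining classes generate the full group.

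In the global case $k = \Q$: if $V_\infty$ is anisotropic then the sign of ${\rm q}$ on $V_\infty \setminus \{0\}$ is constant, so $H \subset \Q_{>0}/\Q^{\times,2}$ automatically. It remains to show that every $a \in \Q^\times$ of the correct sign belongs to $H$. If $\dim V \geq 4$, the form $V \oplus \langle -a \rangle$ has dimension $\geq 5$, hence is isotropic at every finite place, and is also isotropic at $\infty$ by the sign assumption; Hasse--Minkowski then furnishes a global zero, so $V$ represents $a$ and $a \in H$. If $\dim V = 3$ this argument fails because $V \oplus \langle -a \rangle$ has dimension $4$ and may be anisotropic at some finite prime; I would instead invoke the classical correspondence, via the even Clifford algebra, between ternary non-degenerate quadratic spaces over $\Q$ and quaternion algebras $B$ over $\Q$, under which $(V, {\rm q})$ is a scalar multiple of the trace-zero part of $B$ and ${\rm sn}$ corresponds to the reduced-norm map $B^\times/\Q^\times \to \Q^\times/\Q^{\times,2}$. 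The Hasse--Schilling--Maass theorem identifies the image of the reduced norm $B^\times \to \Q^\times$ with the subgroup of elements positive at every real place where $B$ ramifies, and $B$ ramifies at the unique real place of $\Q$ iff $V_\infty$ is anisotropic, yielding precisely the asserted image.

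The main obstacle is the low-dimensional $\dim V = 3$ global case, which lies beyond the reach of Hasse--Minkowski alone and requires the deeper Hasse--Schilling--Maass norm theorem via the quaternion-algebra interpretation of $V$; the remaining cases reduce cleanly to local Witt theory together with the universality provided by dimension $\geq 5$ and Hasse--Minkowski.
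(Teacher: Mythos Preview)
The paper's own ``proof'' is nothing more than a citation to O'Meara \cite{omeara} (statements 91:6 and 101:8), so your argument is not an alternative route but an unpacking of the standard one: Cartan--Dieudonn\'e to reduce to the subgroup $H$ generated by the nonzero values of ${\rm q}$, local Witt/quaternion theory for $k=\Q_p$, Hasse--Minkowski in dimension $\geq 4$ over $\Q$, and the Hasse--Schilling--Maass norm theorem via the even Clifford algebra in dimension $3$. This is correct in outline and in most of its details.

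There is, however, a real slip in your opening reduction. The assertion that ${\rm sn}({\rm SO}(V))=H$ whenever $\dim V\geq 3$ is false: take $V$ negative definite over $\Q$. Every anisotropic vector has ${\rm q}(v)<0$, so an element of ${\rm SO}(V)$, being an \emph{even} product of reflections, has positive spinor norm, whence ${\rm sn}({\rm SO}(V))\subset \Q_{>0}/\Q^{\times,2}$; yet $H$, generated by negative classes, is all of $\Q^\times/\Q^{\times,2}$. Your parenthetical hint (two independent vectors of equal ${\rm q}$-value in a $2$-plane) yields only $s_vs_w$ with ${\rm sn}(s_vs_w)={\rm q}(v)^2\equiv 1$, hence proves nothing. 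The same error resurfaces in your upper bound ``$H\subset \Q_{>0}$'' in the global anisotropic case, which is wrong for negative definite $V$. The fix is immediate: since ${\rm O}(V,{\rm q})={\rm O}(V,-{\rm q})$ and the spinor norms for ${\rm q}$ and $-{\rm q}$ coincide on ${\rm SO}(V)$ (each reflection contributes an extra $-1$, and there are evenly many), one may assume $V_\infty$ positive definite. After this reduction your $\dim V\geq 4$ argument shows $V$ represents $1$, so ${\rm sn}({\rm SO}(V))=H=\Q_{>0}/\Q^{\times,2}$ genuinely holds; your quaternionic argument for $\dim V=3$ does not depend on the reduction and is correct as written; and over $\Q_p$ the issue never arises, since the represented square-classes are never confined to a single nontrivial coset of an index-$2$ subgroup of $\Q_p^\times/\Q_p^{\times,2}$.
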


\begin{pf} See {\it e.g.} the statements 91:6 and 101:8 in \cite{omeara}. \end{pf}

%\begin{pf} For a nonzero definite quadratic space over $\R$ we trivially have ${\rm sn}({\rm SO}(V))= \R_{>0}$. 
%By the compatibility of the spinor norm with the restriction to a non-degenerate subspace, we may assume $\dim V=3$ in order to prove the lemma. But if $W$ is a $3$-dimensional non-degenerate quadratic space over a field $k$ (necessarily of characteristic $\neq 2$),  we may identify ${\rm SO}(W)$ with $H^\times/k^\times$, 
%where $H$ is a quaternion algebra over $k$, 
%and the spinor norm ${\rm SO}(W) \rightarrow k^\times/k^{\times,2}$ with the reduced norm on $H^\times$ mod $k^{\times,2}$. 
%(Reduce to the case $\det W$ is a square, which is possible as $\dim W$ is odd). 
%The surjectivity of ${\rm sn}$ in \eqref{spinornorm} for $k=\Q_p$ follows as this reduced norm
%is a non-degenerate quadratic form in $4$ variables.
%The case $k=\Q$ follows then from these local cases  and the Hasse-Minkowski theorem.
%\end{pf}

\begin{lemma}\label{spinornormOlemma}  Assume $V$ is a non-degenerate quadratic space over $\Q_p$ with $\dim V \geq 3$. The morphism ${\rm sn} \times \det : {\rm O}(V) \longrightarrow \Q_p^\times/\Q_p^{\times,2} \times \{ \pm 1\}$ is surjective with kernel $\mu({\rm Spin}(V))$. When $V$ is isotropic, this kernel is furthermore the commutator subgroup of ${\rm O}(V)$. 
\end{lemma}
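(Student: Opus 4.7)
The plan is to establish separately the three assertions of the lemma: (a) surjectivity of ${\rm sn} \times \det$, (b) identification of its kernel with $\mu({\rm Spin}(V))$, and (c) under the isotropy hypothesis, identification of this kernel with the commutator subgroup $[{\rm O}(V),{\rm O}(V)]$.

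For (a), I would start from Lemma \ref{spinornormSOlemma}, which already gives that ${\rm sn} : {\rm SO}(V) \to \Q_p^\times/\Q_p^{\times,2}$ is surjective (the exceptional case of that lemma does not occur since $k=\Q_p$). Since $V$ is non-degenerate with $\dim V \geq 3$, there exists an anisotropic vector $v \in V$; the associated reflection $s_v$ lies in ${\rm O}(V)\setminus {\rm SO}(V)$ and has ${\rm sn}(s_v) \equiv {\rm q}(v) \bmod \Q_p^{\times,2}$. Given any target $(\alpha,\varepsilon) \in \Q_p^\times/\Q_p^{\times,2} \times \{\pm 1\}$: if $\varepsilon=+1$, use the surjectivity on ${\rm SO}(V)$; if $\varepsilon=-1$, pick $g \in {\rm SO}(V)$ with ${\rm sn}(g) = \alpha\cdot {\rm q}(v)^{-1}$ and consider $g s_v$. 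This covers every pair, proving surjectivity.

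For (b), this is essentially a formal consequence of the exact sequence \eqref{spinornorm}. An element $g \in \ker({\rm sn}\times\det)$ satisfies $\det(g)=1$, hence $g \in {\rm SO}(V)$, and ${\rm sn}(g)=1$, so by exactness at ${\rm SO}(V)$ we have $g \in \mu({\rm Spin}(V))$. The reverse inclusion is immediate from the same sequence.

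For (c), one inclusion is automatic: the target of ${\rm sn}\times\det$ is abelian, so $[{\rm O}(V),{\rm O}(V)] \subset \ker({\rm sn}\times\det) = \mu({\rm Spin}(V))$. The reverse inclusion $\mu({\rm Spin}(V)) \subset [{\rm O}(V),{\rm O}(V)]$ is the main obstacle and is where the isotropy of $V$ enters essentially. I would invoke the classical theorem of Eichler, treated in detail in O'Meara's \emph{Introduction to Quadratic Forms} (§55, in particular 55:6a), which asserts that for a non-degenerate isotropic quadratic space of dimension $\geq 3$ over a field (not of characteristic $2$), the commutator subgroup of the orthogonal group coincides with the kernel of ${\rm sn}\times\det$. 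The underlying idea is that, since $V$ contains a hyperbolic plane, one has at disposal the Eichler transformations $E_{u,w}$ (for $u$ isotropic and $w \perp u$), which generate $\mu({\rm Spin}(V))$ and can each be written explicitly as a commutator of two reflections along suitably chosen vectors. Alternatively, a more conceptual argument uses that ${\rm Spin}_V$ is a simply connected semisimple isotropic algebraic group over $\Q_p$, so by Tits' theorem ${\rm Spin}(V)(\Q_p)$ modulo its center is abstractly simple, whence perfect, and the image $\mu({\rm Spin}(V))$ is a perfect subgroup of ${\rm SO}(V) \subset {\rm O}(V)$. Either route gives the desired inclusion and completes the proof.
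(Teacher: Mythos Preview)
Your proof is correct and follows essentially the same approach as the paper: deduce surjectivity and the kernel description from Lemma~\ref{spinornormSOlemma} together with the exact sequence~\eqref{spinornorm}, and invoke O'Meara \S 55:6a for the commutator statement in the isotropic case. The paper's own proof is just these two references stated in one line; you have simply unpacked the details (and added an alternative route via Tits' simplicity theorem for part (c)).
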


\begin{pf} The first assertion is a trivial consequence of Lemma \ref{spinornormSOlemma} and of the exactness of \eqref{spinornorm}.  The last assertion follows from \cite{omeara} \S 55:6a.
%The fact that $\mu({\rm Spin}(W))$ is the derived subgroup of ${\rm O}(W)$ is well-known to hold
%for any isotropic non-degenerate quadratic space $W$ over a field of characteristic $\neq 2$: see {chevalley}.
\end{pf}

%We mention for latter use an integral variant of the first assertion of the lemma above.

A $\Z_p$-lattice $L$ in a quadratic space $V$ over $\Q_p$ (see \S \ref{sect:notations}) is called {\it unimodular} if we have ${\rm q}(L) \subset \Z_p$,
and if $(L,{\rm q}_{|L})$  a non-degenerate quadratic space over $\Z_p$, or equivalently, if $\det L \in \Z_p^\times/\Z_p^{\times,2}$. 
Note that for $p=2$, a unimodular $L$ is even and of even rank.

\begin{lemma}\label{integralspin}
Let $L$ be a unimodular $\Z_p$-lattice in the quadratic space $V$ over $\Q_p$.
Then ${\rm q}(L)=\Z_p$ and the sequence \eqref{spinornorm} induces an exact sequence 
$${\rm Spin}(L) \overset{\mu}{\longrightarrow} {\rm O}(L) \overset{{\rm sn} \times \det}{\longrightarrow} \Z_p^\times /\Z_p^{\times,2} \times \{ \pm 1\},$$
and the last arrow above is surjective for $\dim V \geq 2$.
\end{lemma}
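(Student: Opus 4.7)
The plan is to deduce the lemma from its $\Q_p$-counterpart \eqref{spinornorm} by exploiting two integral inputs afforded by the unimodularity of $L$: the existence of plenty of integral vectors $v\in L$ with unit norm ${\rm q}(v)$, and the good behaviour over $\Z_p$ of the Clifford-algebra construction. The proof splits naturally into three steps: establishing the representation statement ${\rm q}(L)=\Z_p$, reading off the surjectivity of ${\rm sn}\times \det$ from it, and then descending the exactness at ${\rm O}(L)$ from $\Q_p$ to $\Z_p$.

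First I would prove ${\rm q}(L)=\Z_p$ using the classification of unimodular $\Z_p$-lattices. For $p$ odd, $L$ diagonalizes and, assuming $\dim V\geq 2$, contains two orthogonal unit-norm generators $e_1,e_2$ with ${\rm q}(e_i)=u_i\in \Z_p^\times$; the binary form $u_1 X^2+u_2 Y^2$ then represents every element of $\Z_p$ by a short mod-$p$ plus Hensel argument (for rank $\geq 3$ one may alternatively extract a hyperbolic plane summand via Chevalley--Warning and conclude directly). For $p=2$ the hypothesis forces $L$ to be even and of even rank; writing $L$ as an orthogonal sum of hyperbolic planes and "twisted" planes of Gram matrix $\left(\begin{smallmatrix} 2 & 1\\ 1 & 2\end{smallmatrix}\right)$, the hyperbolic summands represent $\Z_2$ outright, while a mod-$8$ check shows that the twisted plane realises every class in $\Z_2^\times/\Z_2^{\times,2}$. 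What is actually used in what follows is only the surjectivity of ${\rm q}(L)$ onto $\Z_p^\times/\Z_p^{\times,2}$, which in all cases above is immediate.

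For the surjectivity of ${\rm sn}\times\det$, given any $a\in \Z_p^\times$, the previous step produces $v\in L$ with ${\rm q}(v)=a$. Since ${\rm q}(v)$ is a unit and $L$ is integral, the orthogonal symmetry $s_v(x)=x-\frac{x\cdot v}{{\rm q}(v)}v$ preserves $L$, hence lies in ${\rm O}(L)$, with $\det s_v=-1$ and ${\rm sn}(s_v)\equiv a\bmod \Z_p^{\times,2}$ by the recollection made just before \eqref{spinornorm}. Combining the identity with $s_v$ and with products $s_v s_w$, one hits every element of $\Z_p^\times/\Z_p^{\times,2}\times\{\pm 1\}$, provided $\dim V\geq 2$ so that both factors can be realised independently.

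Finally, for the exactness at ${\rm O}(L)$, let $g\in {\rm O}(L)$ satisfy $\det g=1$ and ${\rm sn}(g)\in \Z_p^{\times,2}$; \eqref{spinornorm} over $\Q_p$ furnishes a lift $\tilde g\in {\rm Spin}(V)$ with $\mu(\tilde g)=g$, and the task is to show $\tilde g$ actually lies in ${\rm Spin}(L)$. I would invoke Kneser's reflection-generation theorem to write $g=s_{v_1}\cdots s_{v_{2k}}$ with $v_i\in L$ and ${\rm q}(v_i)\in \Z_p^\times$, pick $u\in \Z_p^\times$ with $u^2=\prod {\rm q}(v_i)={\rm sn}(g)$ (a square unit by hypothesis), and observe that $v_1\cdots v_{2k}/u$ lies in the even Clifford algebra $C^+(L)$ and satisfies the norm condition cutting out ${\rm Spin}(L)\subset C^+(L)^\times$. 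A conceptually cleaner alternative is to apply flat cohomology to the exact sequence $1\to \mu_2\to {\rm Spin}_L\to {\rm SO}_L\to 1$ of smooth affine $\Z_p$-group schemes: the coboundary ${\rm SO}(L)\to H^1_{\mathrm{fppf}}(\Z_p,\mu_2)=\Z_p^\times/\Z_p^{\times,2}$ is precisely the spinor norm. The main obstacle is this last step, which requires either the (classical but non-formal) Kneser reflection theorem for unimodular $\Z_p$-lattices, or the verification that ${\rm Spin}_L$ is smooth over $\Z_p$ with kernel $\mu_2$ of $\mu$; both rely crucially on $L$ being unimodular, which is exactly what makes the integral Clifford algebra well behaved.
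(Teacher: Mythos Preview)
Your argument is correct, but you have made the exactness step harder than it needs to be. In the paper, the sequence \eqref{spinornorm} is set up in \S\ref{sect:notations}(vi) over an \emph{arbitrary} commutative ring $k$, with the reference \cite[Thm.~6.2.6]{knuss}; so the paper's proof of exactness is the single observation that $L$, being unimodular, is a non-degenerate quadratic space over $k=\Z_p$, and \eqref{spinornorm} applies verbatim. You instead read \eqref{spinornorm} only over $\Q_p$ and then descend to $\Z_p$ via Kneser's reflection-generation or via fppf cohomology of $1\to\mu_2\to{\rm Spin}_L\to{\rm SO}_L\to 1$. Both of your routes work (the cohomological one is cleanest, and your identification of the coboundary with the spinor norm is exactly the content of the Knus reference), but they reprove a special case of what the paper has already cited in full generality.

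For ${\rm q}(L)=\Z_p$, the paper gives a one-line Hensel argument from ${\rm q}(L\otimes\Z/p)=\Z/p$, whereas you argue via the explicit classification of unimodular $\Z_p$-lattices. Your approach is more explicit and, incidentally, your parenthetical remark that only surjectivity onto $\Z_p^\times/\Z_p^{\times,2}$ is needed is well taken: for $p=2$ and $L$ the anisotropic (``twisted'') plane, ${\rm q}$ does not represent $2\Z_2^\times$, so the literal equality ${\rm q}(L)=\Z_2$ fails there, though the surjectivity of ${\rm sn}\times\det$ (which is all that is used later) is unaffected. Your derivation of that surjectivity via reflections $s_v$ with ${\rm q}(v)\in\Z_p^\times$ is exactly the intended argument.
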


\begin{pf} The existence and exactness of this sequence is a special case of \eqref{spinornorm}, applied to the quadratic space $L$ over $\Z_p$ (non-degenerate by assumption). The surjectivity assertion comes from the equality ${\rm q}(L)=\Z_p$,
which is a simple consequence of Hensel's lemma and ${\rm q}(L \otimes \Z/p)=\Z/p$, which in turns holds 
since $L \otimes \Z/p$ is non-degenerate of dimension $\geq 2$ over $\Z/p$.
\end{pf}

Fix $V$ a non-degenerate quadratic space over $\Q$.
We now recall a few facts about the adelic orthogonal group of $V$. 
It is convenient to fix a $\Z$-lattice $L$ in $V$ with ${\rm q}(L) \subset \Z$.
We may view $L$ as a (possibly degenerate) quadratic space over $\Z$,
and also  $L_p:=L \otimes \Z_p$ as a quadratic space over $\Z_p$; it is non-degenerate
(i.e. unimodular) for all but finitely many primes $p$. 
If $L'$ is another lattice in $V$, we have $L'_p=L_p$ for all but finitely many $p$.
Each ${\rm O}(V_p)$ is a locally compact topological group in a natural way, in which ${\rm O}(L_p)$
is a compact open subgroup.\ps

We denote by $\AAA_f$ the $\Q$-algebra of finite ad\`eles and set $\AAA = \R \times \AAA_f$.
As usual, we identify the group ${\rm O}_V(\AAA_f)$ with the group of sequences $(g_p)$ with $g_p \in {\rm O}(V_p)$ for all primes $p$, and $g_p \in {\rm O}(L_p)$ for all but finitely many $p$ (it does not depend on the choice of $L$). 
It contains ${\rm O}_V(\Q)={\rm O}(V)$ in a natural ``diagonal'' way.
Following Weil, ${\rm O}_V(\AAA_f)$ is a locally compact topological group if we choose as a basis of open neighborhoods of $1$ the subgroups of the form $\prod_p K_p$, with $K_p$ a compact open subgroup of ${\rm O}(V_p)$ equal to ${\rm O}(L_p)$ for all but finitely many $p$ (with their product topology). 
Similar descriptions and properties hold for adelic points of the algebraic $\Q$-groups ${\rm SO}_V$ and ${\rm Spin}_V$.
The morphism $\mu$ induces a topological group homomorphism ${\rm Spin}_V(\AAA_f) \rightarrow {\rm O}_V(\AAA_f)$.
The following lemma is a reformulation of Kneser's strong approximation theorem for Spin groups \cite{kneserstrong}.\ps

\begin{lemma}\label{strongapp} Let $V$ be a non-degenerate quadratic space over $\Q$ with $\dim V \geq 3$. Consider a map $\varphi : {\rm O}_V(\AAA_f) \rightarrow \C$ which is left-invariant under ${\rm O}(V)$, and right-invariant under some compact open subgroup of ${\rm O}_V(\AAA_f)$. Assume that $\varphi$ is right-invariant under $\mu({\rm Spin}(V_p))$ for some prime $p$ with $V_p$ is isotropic. 
Then $\varphi$ is left and right-invariant under $\mu({\rm Spin}(\AAA_f))$.
\end{lemma}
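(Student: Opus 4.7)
The plan is to combine Kneser's strong approximation theorem for the simply connected group ${\rm Spin}_V$ with the normality of $\mu({\rm Spin})$ inside ${\rm O}$ provided by Lemma~\ref{spinornormOlemma}. On one hand, $\dim V\geq 3$ and the isotropy of $V_p$ make ${\rm Spin}(V_p)$ non-compact, so Kneser's theorem yields density of ${\rm Spin}(V)$ inside ${\rm Spin}(V_\infty)\times{\rm Spin}(\AAA_f^{(p)})$, where $\AAA_f^{(p)}$ denotes the restricted product of the $\Q_v$'s over finite $v\neq p$. On the other hand, Lemma~\ref{spinornormOlemma} identifies $\mu({\rm Spin}(V_v))$ with the kernel of ${\rm sn}\times\det$ on ${\rm O}(V_v)$, hence as a normal subgroup; assembling at each place, $N:=\mu({\rm Spin}(\AAA_f))$ is normal in ${\rm O}_V(\AAA_f)$, so right-$N$-invariance automatically implies left-$N$-invariance via $\varphi(ny)=\varphi(y\cdot y^{-1}ny)=\varphi(y)$. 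It thus suffices to establish right-invariance of $\varphi$ under $N$.

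For this, fix $\tilde g\in{\rm Spin}(\AAA_f)$ and $y\in{\rm O}_V(\AAA_f)$. I would aim to produce $\gamma\in{\rm Spin}(V)$ such that $y^{-1}\mu(\gamma)y\mu(\tilde g)\in K\cdot\mu({\rm Spin}(V_p))$, where $K$ is the compact open subgroup of right-invariance of $\varphi$ and $\mu({\rm Spin}(V_p))$ is understood as embedded at the $p$-th place only. Writing this element as $kn$ with $k\in K$ and $n\in\mu({\rm Spin}(V_p))$ yields $y\mu(\tilde g)=\mu(\gamma)^{-1}y\cdot kn$, so the three given invariances (left-${\rm O}(V)$ for $\mu(\gamma)^{-1}\in{\rm O}(V)$, right-$K$ for $k$, right-$\mu({\rm Spin}(V_p))$ for $n$) combine to give $\varphi(y\mu(\tilde g))=\varphi(y)$, as desired.

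To construct such a $\gamma$: at each $v\neq p$ the element $b_v:=y_v\mu(\tilde g_v)^{-1}y_v^{-1}$ lies in $\mu({\rm Spin}(V_v))$ by local normality; a short spinor norm and determinant computation combined with Lemma~\ref{integralspin} shows that $b_v\in\mu({\rm Spin}(L_v))$ at almost every $v$, so it lifts to some $\tilde b_v\in{\rm Spin}(L_v)$ there. These assemble into $\tilde b\in{\rm Spin}(\AAA_f^{(p)})$, which strong approximation approximates by $\gamma\in{\rm Spin}(V)$ so closely that $\mu(\gamma_v)\mu(\tilde b_v)^{-1}\in y_vK_vy_v^{-1}$ at every $v\neq p$. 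Substituting $\mu(\tilde b_v)=y_v\mu(\tilde g_v)^{-1}y_v^{-1}$ then yields $y_v^{-1}\mu(\gamma_v)y_v\mu(\tilde g_v)\in K_v$ for $v\neq p$; at $v=p$, the same quantity lies automatically in $\mu({\rm Spin}(V_p))$ by local normality, completing the required inclusion.

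The principal technical step is this simultaneous approximation over all finite $v\neq p$: one must force $\mu(\gamma_v)\mu(\tilde b_v)^{-1}$ inside the conjugate $y_vK_vy_v^{-1}$ at each such $v$. The constraint is automatic at almost every $v$ (thanks to integrality of $y_v$, $\tilde g_v$, and $K_v={\rm O}(L_v)$, together with $\mu({\rm Spin}(L_v))\subset{\rm O}(L_v)$), so only finitely many places require a careful choice of small open neighborhood in the strong approximation step, which is routine bookkeeping. The essential content of the lemma is entirely carried by Kneser's strong approximation for ${\rm Spin}$ and the local structure of $\mu({\rm Spin})$ in ${\rm O}$.
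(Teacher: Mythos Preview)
Your proof is correct and uses essentially the same ingredients as the paper's: Kneser's strong approximation for ${\rm Spin}_V$ at the isotropic place $p$, and the normality of $\mu({\rm Spin})$ in ${\rm O}$ at every place (Lemma~\ref{spinornormOlemma}). The only organizational difference is that the paper pulls the problem back to ${\rm Spin}_V(\AAA_f)$ by setting $\varphi_x(h)=\varphi(\mu(h)x)$, checks that $\varphi_x$ is left-${\rm Spin}(V)$-invariant, right-$K'$-invariant for some compact open $K'\subset\mu^{-1}(xKx^{-1})$, and right-${\rm Spin}(V_p)$-invariant, and then invokes strong approximation in the decomposition form ${\rm Spin}_V(\AAA_f)={\rm Spin}(V)\cdot K'\cdot{\rm Spin}(V_p)$ to conclude $\varphi_x$ is constant; this yields \emph{left}-invariance of $\varphi$ under $\mu({\rm Spin}(\AAA_f))$ first, and right-invariance follows from normality. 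Your route stays in ${\rm O}_V(\AAA_f)$, proves right-invariance first, and deduces left-invariance from normality; the price is the explicit lifting step $b_v\mapsto\tilde b_v$ (which you handle correctly via Lemma~\ref{integralspin}), a step the paper sidesteps by working upstairs in ${\rm Spin}$ from the outset.
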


\begin{pf} Fix $x$ in ${\rm O}_V(\AAA_f)$ and define $\varphi_x : {\rm Spin}_V(\AAA_f) \rightarrow \C$ by $\varphi_x(h) = \varphi(\mu(h)x)$. Then $\varphi_x$ is left-invariant under ${\rm Spin}(V)$. For each prime $p$, $\mu({\rm Spin}(V_p))$ is a normal subgroup of ${\rm O}(V_p)$ by Lemma \ref{spinornormOlemma}. It follows that $\varphi_x$ is right-invariant under ${\rm Spin}(V_p)$ by the analogous assumption on $\varphi$. Assume $K$ is a compact open subgroup of ${\rm O}_V(\AAA_f)$ such that $\varphi$ is right $K$-invariant. By continuity of $\mu : {\rm Spin}_V(\AAA_f) \rightarrow {\rm O}_V(\AAA_f)$ there is a compact open subgroup $K'$ of ${\rm Spin}_V(\AAA_f)$ with $\mu(K') \subset xKx^{-1}$. Then $\varphi_x$ is right $K'$-invariant. 
By Kneser's strong approximation theorem \cite{kneserstrong} at the anisotropic place $p$, which applies as $\dim V \geq 3$, we have $${\rm Spin}_V(\AAA_f)= {\rm Spin}(V) \cdot K' \cdot {\rm Spin}(V_p),$$ so $\varphi_x$ is constant. We have proved $\varphi(\mu(h)x)=\varphi(x)$ for all $h \in {\rm Spin}(\AAA_f)$ and all $x\in {\rm O}_V(\AAA_f)$. We conclude as $x\mu(h) = x \mu(h) x^{-1} x$, and as $\mu({\rm Spin}(\AAA_f))$ is a normal subgroup of ${\rm O}_V(\AAA_f)$ by Lemmas \ref{spinornormOlemma} and \ref{integralspin}. 
\end{pf}

%We know since Eichler that any genus of integral lattices has a finer partition into the so-called {\it spinor genera}. This leads to the following definition.

\begin{definition}\label{defspincar} Let $V$ be a non-degenerate quadratic space over $\Q$ and $K \subset {\rm O}_V(\AAA_f)$ a compact open subgroup. A {\it spinor character} of $V$ of genus $K$ is a group morphism 
$\sigma: {\rm O}_V(\AAA_f) \rightarrow \C^\times$ such that $\sigma(K)=\sigma({\rm O}(V))=1$. These characters form a group under multiplication that we denote by $\Sigma(K)$. 
\end{definition}

Fix $V$ and $K$ as above. For $\sigma \in \Sigma(K)$ and any prime $p$, we have a morphism $\sigma_p : {\rm O}(V_p) \rightarrow \C^\times$ defined by $\sigma_p(g)=\sigma(1 \times g)$. As $K$ contains $1 \times {\rm O}(L_p)$ for all $p$ big enough, we have $\sigma_p({\rm O}(L_p))=1$ for those $p$, and thus the product
$\sigma(g)=\prod_p \sigma_p(g_p)$ makes sense and holds for all $g=(g_p)$ in ${\rm O}_V(\AAA_f)$. 

\begin{lemma} \label{corstrappK}
If $\dim V \geq 3$ then any $\sigma \in \Sigma(K)$ is trivial on $\mu({\rm Spin}_V(\AAA_f))$.
\end{lemma}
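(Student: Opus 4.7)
The plan is to apply Lemma~\ref{strongapp} to $\varphi = \sigma$. The hypotheses to check are: left-invariance under ${\rm O}(V)$, right-invariance under some compact open subgroup, and right-invariance under $\mu({\rm Spin}(V_p))$ for some prime $p$ with $V_p$ isotropic. The conclusion of Lemma~\ref{strongapp} then says $\sigma$ is right-invariant under $\mu({\rm Spin}_V(\AAA_f))$; since $\sigma$ is a group morphism, applying this invariance at $x=1$ yields $\sigma(\mu(h)) = 1$ for all $h \in {\rm Spin}_V(\AAA_f)$, which is what we want.

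The first two hypotheses are immediate. Because $\sigma$ is a group morphism with $\sigma({\rm O}(V))=1$, for any $\gamma \in {\rm O}(V)$ and $x \in {\rm O}_V(\AAA_f)$ we have $\sigma(\gamma x)=\sigma(\gamma)\sigma(x)=\sigma(x)$, giving the left ${\rm O}(V)$-invariance. Similarly, $\sigma(K)=1$ gives right $K$-invariance, and $K$ is compact open by assumption.

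The key step is producing a prime $p$ with $V_p$ isotropic and with $\sigma$ right-invariant under $\mu({\rm Spin}(V_p))$. For the existence of such a $p$, I would argue that for any odd prime $p$ not dividing the determinant of a Gram matrix of some $\Z$-lattice $L \subset V$ with ${\rm q}(L)\subset \Z$, the space $L \otimes \Z/p$ is a non-degenerate quadratic space of dimension $\geq 3$ over $\Z/p$, hence isotropic (by Chevalley--Warning or a direct count), and this lifts by Hensel to isotropy of $V_p$; since all but finitely many primes satisfy this, such $p$ exist in abundance. For such a $p$, Lemma~\ref{spinornormOlemma} identifies $\mu({\rm Spin}(V_p))$ with the commutator subgroup of ${\rm O}(V_p)$. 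Since the local component $\sigma_p : {\rm O}(V_p) \to \C^\times$ has abelian target, it is trivial on commutators, and therefore $\sigma$ is right-invariant under $\mu({\rm Spin}(V_p))$ (viewed as a subgroup of the adelic group via the $p$-th factor).

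There is no real obstacle: the only subtle point is locating the isotropic prime, but the elementary argument above suffices; alternatively one may simply invoke Hasse--Minkowski, which guarantees that only finitely many places of $V$ are anisotropic so that infinitely many suitable primes remain. Once the three hypotheses of Lemma~\ref{strongapp} are in hand, the conclusion is a one-line evaluation of the invariance at the identity.
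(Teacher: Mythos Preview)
Your proof is correct and follows essentially the same approach as the paper: both apply Lemma~\ref{strongapp} to $\varphi=\sigma$, checking the hypotheses via the group-morphism property of $\sigma$ and invoking the second assertion of Lemma~\ref{spinornormOlemma} (that $\mu({\rm Spin}(V_p))$ is the commutator subgroup when $V_p$ is isotropic) to obtain triviality of $\sigma_p$ on $\mu({\rm Spin}(V_p))$. The paper's version is terser, but the logic is identical.
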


\begin{pf} Fix $\sigma \in \Sigma(K)$. For all but finitely many primes $p$, the quadratic space $V_p$ is isotropic and so $\sigma_p$ is trivial on ${\rm Spin}(V_p)$ by the second assertion of Lemma \ref{spinornormOlemma}. We conclude by Lemma \ref{strongapp} applied to $\varphi=\sigma$.
\end{pf}

Spinor characters may be described more concretely as follows. Set 
\begin{equation}\label{defdelta} \Delta:=(\AAA_f^\times/\AAA_f^{\times,2}) \times \{ \pm 1\}^{\rm P} = \prod_{p}'\,\, (\Q_p^\times/\Q_p^{\times,2})  \times \{ \pm 1\}.\end{equation}
Here ${\rm P}$ denotes the set of primes, and
the restricted product above is taken with respect to the subgroups $\Z_p^\times/\Z_p^{\times,2}  \times 1$.
Definitely assume $\dim V \geq 3$.
By Lemmas \ref{spinornormOlemma} \& \ref{integralspin},  $\mu({\rm Spin}_V(\AAA_f))$ is a normal subgroup of ${\rm O}_V(\AAA_f)$
with abelian quotient and the map $\iota : = \prod_p {\rm sn} \times \det$ induces an isomorphism
\begin{equation} \label{idquotosp} \iota : {\rm O}_V(\AAA_f)/\mu({\rm Spin}_V(\AAA_f)) \isomo 
\Delta.\end{equation}
Fix a compact open subgroup $K \subset {\rm O}_V(\AAA)$. By Lemma \ref{corstrappK}, $\Sigma(K)$ coincides with the set of characters of the $2$-torsion abelian group $\Delta$ which are trivial on the 
subgroups $\iota({\rm O}(V))$ and $\iota(K)$. 
Assume $V$ is positive definite to fix ideas. By Lemma \ref{spinornormSOlemma},
$\iota({\rm O}(V)) \subset \Delta$ coincides then with the subgroup of ${\rm diag}(\lambda,\pm 1)$ with $\lambda \in \Q_{>0}$. We have thus an exact sequence
\begin{equation}
\label{exactDelta} 1 \rightarrow \Delta_1(K)  \rightarrow \Delta/\iota({\rm O}(V)\, K) \rightarrow  
\Delta_2(K) \rightarrow 1, \, \, \, {\rm with}
\end{equation}
{\small
\begin{equation}\label{defDelta12}
\Delta_1(K)=(\AAA_f^\times/ \AAA_f^{\times,2})/\langle \Q_{>0},\, {\rm sn}\, K^{\pm}\, \rangle
\,\,\,{\rm and}\,\,\,
\Delta_2(K)=\{ \pm 1\}^P/ \langle \pm 1, \det  K\, \rangle,
\end{equation}
}\par
\noindent where $K^{\pm}$ denotes the subgroup of $\gamma$ in $K$ with $\det \gamma = {\rm diag}(\pm 1)$ in $\{ \pm 1\}^{\rm P}$.
The group $\Delta_2(K)$ is usually easy to determine, whereas for $\Delta_1(K)$ it can be more tricky: see the discussion in \cite[\S 9 Chap. 15]{conwaysloane}. We shall content ourselves with the following classical observations.\ps

 Assume we have a finite set of primes $T$ such that
for all primes $p \notin T$, we have $K \supset 1 \times K'_p$ for some subgroup $K'_p \subset {\rm O}(V_p)$
satisfying $\det K'_p=\{\pm 1\}$ and $\Z_p^\times \subset {\rm sn}(K'_p \cap {\rm SO}(V_p))$.
Such a $T$ always exists, since theses conditions hold for $K'_p={\rm O}(L_p)$ 
when $L_p$ is unimodular, by Lemma \ref{integralspin}. 
Define ${\rm N}_T$ as the product of all odd primes in $T$, and of $8$ in the case $2 \in T$.
We have a natural morphism 
\begin{equation} \label{deltaT} 
\delta_T:  (\Z/{\rm N}_T)^\times / (\Z/{\rm N}_T)^{\times,2} \,=\,\prod_{p \in T} \Z_p^\times/\Z_p^{\times,2} \longrightarrow \Delta_1(K).
\end{equation}
The equality $\AAA_f^\times=\Q_{>0}\prod_p \Z_p^\times$ shows that $\delta_T$ is {\it surjective}. 
In particular, we have the well-known 
(see {\it e.g.} \S 102:8 in \cite{omeara} for the first assertion):

\begin{cor}\label{finitenessSK} $\Sigma(K)$ is a finite elementary abelian $2$-group.
Moreover,  if we may choose $T=\emptyset$ above, we have $\Sigma(K)=1$.
\end{cor}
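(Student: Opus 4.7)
The plan is to prove both assertions by concretely analyzing the finite abelian group $\Delta/\iota({\rm O}(V)K)$, whose Pontryagin dual is $\Sigma(K)$. Indeed, by Lemma~\ref{corstrappK}, any $\sigma \in \Sigma(K)$ is trivial on $\mu({\rm Spin}_V(\AAA_f))$ and on ${\rm O}(V) \cdot K$, so the isomorphism $\iota$ from \eqref{idquotosp} identifies $\Sigma(K)$ with the group of characters of the quotient $\Delta/\iota({\rm O}(V) K)$. Since $\Delta$ is itself $2$-torsion (as a product of the $2$-torsion groups $\Q_p^\times/\Q_p^{\times,2}$ and $\{\pm 1\}$), any such quotient is a $2$-torsion abelian group; so as soon as we show it is finite, $\Sigma(K)$ will automatically be a finite elementary abelian $2$-group.

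To prove finiteness, I would exploit the exact sequence \eqref{exactDelta}, reducing the problem to showing that both $\Delta_1(K)$ and $\Delta_2(K)$ are finite. For $\Delta_1(K)$, this is immediate: the map $\delta_T$ of \eqref{deltaT} is surjective with finite source $(\Z/{\rm N}_T)^\times/(\Z/{\rm N}_T)^{\times,2}$. For $\Delta_2(K)$, the hypothesis $K \supset 1 \times K'_p$ with $\det K'_p = \{\pm 1\}$ for all $p \notin T$ implies that $\det K$ contains the direct sum $\bigoplus_{p \notin T} \{\pm 1\}$ inside the restricted product $\{\pm 1\}^{\rm P}$, so that $\Delta_2(K)$ is a quotient of the finite group $\{\pm 1\}^T/\langle \pm 1\rangle$, of order at most $2^{|T|}$. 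Combining, $\Delta/\iota({\rm O}(V)K)$ is finite, proving the first assertion.

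For the second assertion, suppose we may take $T = \emptyset$. Then ${\rm N}_T = 1$ by convention (empty product), so the source of $\delta_T$ is the trivial group $(\Z/1)^\times/(\Z/1)^{\times,2} = 1$; by surjectivity of $\delta_T$, we get $\Delta_1(K) = 1$. Similarly, since the hypothesis now holds at \emph{every} prime $p$, $\det K$ contains all of $\{\pm 1\}^{\rm P}$, so $\Delta_2(K) = 1$. The exact sequence \eqref{exactDelta} then forces $\Delta/\iota({\rm O}(V)K) = 1$, hence $\Sigma(K) = 1$.

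I do not expect any serious obstacle here: all ingredients (the identification of $\Sigma(K)$ via $\iota$ from Lemmas~\ref{spinornormOlemma}, \ref{integralspin}, and \ref{corstrappK}; the exact sequence \eqref{exactDelta}; the surjectivity of $\delta_T$) have already been established in the section, so the argument is largely bookkeeping. The only point requiring mild care is the identification of $\{\pm 1\}^{\rm P}$ as the \emph{restricted} product (i.e.\ direct sum) rather than the full product, which is what makes $\Delta_2(K)$ finite under the given hypotheses.
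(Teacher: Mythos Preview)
Your proof is correct and, for the second assertion, identical to the paper's (surjectivity of $\delta_T$ with ${\rm N}_\emptyset=1$ gives $\Delta_1(K)=1$; the hypothesis on $\det K'_p$ at every prime gives $\Delta_2(K)=1$). For the first assertion the paper simply cites O'Meara, whereas you supply the natural direct argument using the same two ingredients; this is a welcome addition and costs nothing extra.

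One caveat on the point you flag. You read $\{\pm 1\}^{\rm P}$ as the direct sum, following the paper's literal description of the restricted product in \eqref{defdelta}. But the paper's own identification of $\iota({\rm O}(V))$ with the diagonal elements ${\rm diag}(\lambda,\pm 1)$ forces $\{\pm 1\}^{\rm P}$ to be the \emph{full} product (the constant sequence $-1$ must lie in it), so the wording ``$\times\,1$'' in \eqref{defdelta} is a slip. Under the full-product reading, the hypotheses $K \supset 1 \times K'_p$ taken one prime at a time only give $\bigoplus_{p \notin T}\{\pm 1\} \subset \det K$, which is not enough. The fix is immediate: since $K$ is \emph{open} it contains a full product $\{1\}\times\prod_{p \notin S'} {\rm O}(L_p)$ for some finite $S'$, so $\det K$ already contains the full product $\prod_{p \notin S'}\{\pm 1\}$, and $\Delta_2(K)$ is a quotient of the finite group $\{\pm 1\}^{S'\cup T}$. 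With this adjustment your argument goes through in either reading.
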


\begin{pf} Note that $T=\emptyset$ implies ${\rm N}_T=1$, so $\Delta_1(K)=1$ by surjectivity of $\delta_T$, 
and also $\Delta_2(K)=1$ as we have $\det K = \{ \pm 1\}^P$.
\end{pf}

\section{Review of local $p$-neighbors and of the Hecke ring}
\label{sect:pnei}

Fix a prime $p$ and let $V$ be a non-degenerate quadratic space over $\Q_p$.
We denote by $\mathcal{U}(V)$ the set of unimodular $\Z_p$-lattices $L  \subset V$ (see \S \ref{prelimspin}).
We assume
\begin{equation} \label{existsunimodular} \mathcal{U}(V) \neq \emptyset\end{equation}
in all this section. We have a natural action of ${\rm O}(V)$ on $\mathcal{U}(V)$.
The isometry class of a non-degenerate quadratic space over $\Z_p$ is uniquely determined by its 
dimension and determinant (see e.g. \cite{omeara} 92:1a and \S 93).
As we have  $\det L \equiv \det V \, \bmod \Q_p^{\times, 2}$ for all $L$ in $\mathcal{U}(V)$, and $\Z_p^\times \cap \Q_p^{\times,2}=\Z_p^{\times,2}$, we deduce:

\begin{lemma} 
\label{orbunimloc}
$\mathcal{U}(V)$ form a single orbit under the action of ${\rm O}(V)$.
\end{lemma}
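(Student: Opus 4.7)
The plan is to leverage the classification of non-degenerate quadratic spaces over $\Z_p$ already cited in the preamble to the lemma, and then pass from a $\Z_p$-isometry of lattices to a $\Q_p$-isometry of the ambient space $V$.

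First I would pick two lattices $L, L' \in \mathcal{U}(V)$ and compare them as quadratic spaces over $\Z_p$. Both are non-degenerate and have rank equal to $\dim_{\Q_p} V$. By the cited classification (\cite{omeara} 92:1a and \S 93), it suffices to check that $\det L \equiv \det L' \bmod \Z_p^{\times,2}$. This is precisely the point already made in the paragraph just above the lemma: since $L \otimes \Q_p = V = L' \otimes \Q_p$, any Gram matrix of $L$ and any Gram matrix of $L'$ differ by a change-of-basis matrix in ${\rm GL}_n(\Q_p)$, so $\det L$ and $\det L'$ have the same image in $\Q_p^\times/\Q_p^{\times,2}$; together with $\det L, \det L' \in \Z_p^\times$ and the identity $\Z_p^\times \cap \Q_p^{\times,2} = \Z_p^{\times,2}$, this forces equality in $\Z_p^\times/\Z_p^{\times,2}$.

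Next I would invoke the classification to produce an isometry $\varphi : L \isomo L'$ of quadratic spaces over $\Z_p$. Tensoring with $\Q_p$ yields a $\Q_p$-linear map $\varphi \otimes \Q_p : V \to V$ preserving the quadratic form ${\rm q}$, i.e.\ an element $g \in {\rm O}(V)$, and by construction $g(L) = L'$. This shows that ${\rm O}(V)$ acts transitively on $\mathcal{U}(V)$.

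No serious obstacle is expected: the only non-trivial input is the classification over $\Z_p$, which is cited. A minor point to handle cleanly is the case $p=2$ (where the classification by dimension and determinant still applies in the unimodular setting because, by definition here, a unimodular $\Z_2$-lattice is even and of even rank, placing us in the regime of \cite{omeara} \S 93 rather than the pathological odd-rank case).
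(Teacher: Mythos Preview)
Your proposal is correct and follows essentially the same approach as the paper: the paper's entire proof is the sentence immediately preceding the lemma statement, which uses the classification of non-degenerate quadratic spaces over $\Z_p$ by dimension and determinant together with the observation $\det L \equiv \det V \bmod \Q_p^{\times,2}$ and $\Z_p^\times \cap \Q_p^{\times,2}=\Z_p^{\times,2}$. You are simply more explicit about the (trivial) final step of extending the $\Z_p$-isometry $L \isomo L'$ to an element of ${\rm O}(V)$ by tensoring with $\Q_p$.
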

%Assume $n=\dim V \geq 1$ and write $n=2r+s$ with $r\geq 0$ and $s \in \{1,2\}$.
%Any $L'$ in $\mathcal{U}(V)$ 
%is isometric to ${\rm H}(\Z_p)^r \perp P$, where $P$ is a non-degenerate quadratic space of rank $s$ over $\Z_p$ with $\det P \equiv (-1)^r \det V$ (such a $P$ is unique up to isometry). 

Recall that a $p$-neighbor of $L \in \mathcal{U}(V)$ is an element $N  \in \mathcal{U}(V)$ such that $L\cap N$ has index $p$ in $L$. 
We denote by $\mathcal{N}_p(L) \subset \mathcal{U}(V)$ the subset of $p$-neighbors of $L$. 
We have a natural {\it line} map 
\begin{equation} \label{lineneigh} \ell: \mathcal{N}_p(L) \longrightarrow {\rm C}_L(\Z/p), \, \, N \mapsto (pN+pL)/pL, \end{equation}
where ${\rm C}_L(\Z/p)$ denotes the set of isotropic lines in $L \otimes \Z/p$.
Indeed, note that we have $pN\cap pL = pM$, so that the image of $pN$ in $L/pL$ is naturally isomorphic to $pN/pM \simeq N/M$,
and satisfies ${\rm q}(pN) \equiv 0 \bmod p^2$. \ps
Note also that for $N \in \mathcal{N}_p(L)$ and $M:=N \cap L$, we have $L/M \simeq \Z/p$ and
$M . pN \equiv 0 \bmod p$,  so $M/pL$ is the orthogonal of $\ell(N)$ in $L \otimes \Z/p$.
Also, if $x \in pN$ generates $\ell(N)$, then we have ${\rm q}(x) \equiv 0 \bmod p^2$ and $N=\,M\,+\,\Z_p \, x/p$.
The following lemma is well-known.

\begin{lemma}\label{bijlinemap} The line map \eqref{lineneigh} is bijective. \end{lemma}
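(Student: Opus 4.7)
The plan is to exhibit an explicit inverse to $\ell$, following the recipe already hinted at just before the statement of the lemma: to each $l \in {\rm C}_L(\Z/p)$, I will associate the lattice $N(l) = {\rm M}_p(L;l) + \Z_p \cdot (x/p)$ for a suitable lift $x \in L$ of $l$, show that $N(l) \in \mathcal{N}_p(L)$ with $\ell(N(l)) = l$, and conversely show that any $N \in \mathcal{N}_p(L)$ is recovered from $l = \ell(N)$ by this construction.

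First, for the \emph{existence of a good lift}, I start with any $x_0 \in L$ whose image generates $l$. Isotropy of $l$ gives ${\rm q}(x_0) \in p\Z_p$, and I want to replace $x_0$ by $x = x_0 + p z$ with ${\rm q}(x) \in p^2\Z_p$. Expanding, ${\rm q}(x) = {\rm q}(x_0) + p(x_0 \cdot z) + p^2{\rm q}(z)$, so I need $x_0 \cdot z \equiv -{\rm q}(x_0)/p \bmod p$. Since $L$ is unimodular and $\overline{x_0}$ is nonzero in $L/pL$, the linear form $z \mapsto x_0 \cdot z \bmod p$ on $L/pL$ is surjective onto $\Z/p$, so such $z$ exists. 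For \emph{well-definedness}, if $x' = u x + p m$ ($u \in \Z_p^\times$, $m \in L$) is a second such lift, replacing $x'$ by $u^{-1}x'$ I may assume $u = 1$; the condition ${\rm q}(x') \equiv 0 \bmod p^2$ then forces $x \cdot m \equiv 0 \bmod p$, hence $m \in {\rm M}_p(L;l)$, and the two resulting lattices agree.

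Next I check that $N(l)$ is a $p$-neighbor of $L$ with $\ell(N(l)) = l$. \emph{Integrality}: pairings inside ${\rm M}_p(L;l)$ are integral since ${\rm M}_p(L;l) \subset L$; cross pairings $v \cdot (x/p)$ for $v \in {\rm M}_p(L;l)$ lie in $\Z_p$ by definition of ${\rm M}_p(L;l)$; and ${\rm q}(x/p) = {\rm q}(x)/p^2 \in \Z_p$ by construction. \emph{Unimodularity}: ${\rm M}_p(L;l)$ has index $p$ in $L$, so covolume $p$, while $x/p \notin L \supset {\rm M}_p(L;l)$ but $p \cdot (x/p) = x \in {\rm M}_p(L;l)$ (the inclusion $x \in {\rm M}_p(L;l)$ follows from $x \cdot x = 2{\rm q}(x) \in p\Z_p$), whence $[N(l) : {\rm M}_p(L;l)] = p$ and ${\rm covol}(N(l)) = 1$. \emph{Intersection with $L$}: writing an element of $N(l) \cap L$ as $v + c(x/p)$ with $v \in {\rm M}_p(L;l)$ and $c \in \Z_p$, the condition $cx/p \in L$ forces $c \in p\Z_p$, so $N(l) \cap L = {\rm M}_p(L;l)$. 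Finally $pN(l) + pL = pL + \Z_p x$, giving $\ell(N(l)) = \mathbb{F}_p\overline{x} = l$.

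For \emph{injectivity}, given $N \in \mathcal{N}_p(L)$ with $l := \ell(N)$, set $M = N \cap L$, pick $y \in N \setminus M$ so that $N = M + \Z_p y$, and note $py \in M \subset L$. Then $\overline{py}$ generates $l$ in $L/pL$ and ${\rm q}(py) = p^2{\rm q}(y) \in p^2\Z_p$. Integrality of $N$ yields $M \cdot (py) \subset p\Z_p$, i.e.\ $M/pL \subset {\rm M}_p(L;l)/pL$; as both are hyperplanes in $L/pL$ (the right-hand side because the bilinear form on $L/pL$ is non-degenerate, by unimodularity) they coincide, so $M = {\rm M}_p(L;l)$ and $N = M + \Z_p y = N(l)$. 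The only delicate point is the existence of the lift in Step 1 and the dimension count in Step 5, both of which rest squarely on the unimodularity of $L$ forcing the reduction $L/pL$ to carry a non-degenerate quadratic form; everything else is bookkeeping.
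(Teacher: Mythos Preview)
Your proof is correct. The approach differs from the paper's, which is worth noting. The paper argues as follows: given $l \in {\rm C}_L(\Z/p)$, set $M = {\rm M}_p(L;l)$ and observe that the residue ${\rm res}\, M = M^\sharp/M$ has order $p^2$; since it contains the isotropic line $L/M$ and also the line generated by $x/p$ for any $x \in L$ lifting $l$, one deduces ${\rm res}\, M$ is a hyperbolic plane over $\Z/p$, which has exactly two isotropic lines. Under the standard bijection between integral overlattices of $M$ and isotropic subgroups of ${\rm res}\, M$ (recalled in \S\ref{sect:notations}(iii)), one of these lines gives back $L$ and the other gives the unique $p$-neighbor $N$ with $N \cap L = M$. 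This handles existence and uniqueness in one stroke.

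Your route is the explicit construction of the inverse $l \mapsto N(l)$ already sketched in the introduction (Formula~\eqref{voisdl}) and in the paragraph preceding the lemma, carried through in full detail: lift, well-definedness, integrality, unimodularity, computation of $N(l)\cap L$ and of $\ell(N(l))$, and finally the verification that any $N$ with line $l$ coincides with $N(l)$. What your approach buys is a concrete and self-contained argument that does not invoke the overlattice/isotropic-subgroup dictionary; what the paper's approach buys is brevity and a structural explanation (the hyperbolic plane has exactly two isotropic lines) for why there is \emph{exactly one} neighbor per line. One cosmetic remark: over $\Z_p$ the word ``covolume'' should really be ``determinant'' (or $[\,\cdot : \cdot\,]$-index bookkeeping), but your computation is correct as written.
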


\begin{pf} Fix $\ell \in {\rm C}_L(\Z/p)$ and set $M= \{ v \in L \, \, |\, \, v. \ell \equiv 0 \bmod p\}$.
The finite quadratic space ${\rm res}\, M$ has order $p^2$ (see \S \ref{sect:notations} (iv)). 
It contains the isotropic subspace $L/M \simeq \Z/p$, and another $\Z/p$-line, generated by $x/p$
for any $x \in L$ generating $\ell$. One easily deduces that ${\rm res}\, M$ is a hyperbolic plane
of rank $2$ over $\Z/p$. It has thus exactly two isotropic lines, namely $L/M$ and another one $N/M$, 
where $N$ is the unique $p$-neighbor of $L$ containing $M$.
\end{pf}

%\begin{pf} Fix $\ell \in {\rm C}_L(\Z/p)$ and set $M= \{ v \in L \, \, |\, \, v. \ell \equiv 0 \bmod p\}$.
%The unimodular $\Z_p$-lattices containing $M$ are in bijection with the isotropic lines in the finite quadratic
%space $V=M^\sharp/M$. For cardinality reasons we have $V=L/M \oplus \Z/p x$ 
%where $x \in M$ generates $\ell$. We may assume ${\rm q}(x) \equiv 0 \bmod p^2$ (Hensel) 
%
%\end{pf}

\noindent We have natural actions of ${\rm O}(L)$ on $\mathcal{N}_p(L)$ and ${\rm C}_L(\Z/p)$,
and the line map is trivially ${\rm O}(L)$-equivariant. As ${\rm O}(L \otimes \Z/p)$ acts transitively 
on ${\rm C}_L(\Z/p)$, and since ${\rm O}(L) \rightarrow {\rm O}(L \otimes \Z/p)$ is surjective 
as $L$ is unimodular, we deduce:

\begin{lemma} 
\label{transpnei}
For $L$ in $\mathcal{U}(V)$, the group ${\rm O}(L)$ acts  transitively on $\mathcal{N}_p(L)$. 
\end{lemma}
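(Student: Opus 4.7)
The plan is to deduce transitivity from the three ingredients already assembled in the paragraph preceding the statement: the line map $\ell: \mathcal{N}_p(L) \to {\rm C}_L(\Z/p)$ is bijective (Lemma \ref{bijlinemap}), it is manifestly ${\rm O}(L)$-equivariant, so it suffices to establish transitivity of ${\rm O}(L)$ on the target ${\rm C}_L(\Z/p)$. This in turn splits naturally into two steps: first, transitivity on isotropic lines over the residue field, and second, a lifting step from ${\rm O}(L \otimes \Z/p)$ to ${\rm O}(L)$.

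First I would argue that ${\rm O}(L \otimes \Z/p)$ acts transitively on ${\rm C}_L(\Z/p)$. Since $L$ is unimodular, the reduction $L \otimes \Z/p$ is a non-degenerate quadratic space over $\Z/p$ (for $p=2$ we are in the even unimodular situation of even rank, so this still makes sense). Any two isotropic vectors generating lines in ${\rm C}_L(\Z/p)$ are related by an isometry of $L \otimes \Z/p$ by Witt's extension theorem applied to the non-degenerate quadratic form, and this isometry carries one line onto the other.

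Next I would invoke surjectivity of the reduction map ${\rm O}(L) \to {\rm O}(L \otimes \Z/p)$, valid because $L$ is a unimodular $\Z_p$-lattice. Concretely, this reduction is surjective: a standard way to see it is to lift an isometry $\bar g$ of $L \otimes \Z/p$ by writing $\bar g$ as a product of reflections about anisotropic vectors (Cartan–Dieudonné), lifting each such vector to $L$ (where its value of ${\rm q}$ remains a unit by Hensel), and then composing the corresponding reflections on $L$; the outcome reduces to $\bar g$. Combining these two points, for any $l,l' \in {\rm C}_L(\Z/p)$ we find $g \in {\rm O}(L)$ whose reduction sends $l$ to $l'$, and hence (by equivariance of $\ell$ and its bijectivity) $g$ carries the unique $p$-neighbor with line $l$ to the unique $p$-neighbor with line $l'$.

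The only genuinely non-formal point in this chain is the surjectivity of ${\rm O}(L) \to {\rm O}(L \otimes \Z/p)$, and I expect that to be the main (minor) obstacle: it needs a separate argument at $p=2$, where unimodularity forces $L$ to be even of even rank, so that the reduction is still a non-degenerate quadratic space and the Cartan–Dieudonné/Hensel lifting argument still applies. Alternatively one may cite the smoothness of the orthogonal group scheme ${\rm O}_L$ over $\Z_p$ for $L$ unimodular (\cite{knuss}), which gives the surjectivity formally.
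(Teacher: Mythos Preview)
Your proposal is correct and follows exactly the same approach as the paper: the paper's proof (given in the sentence immediately preceding the lemma) reduces to transitivity on ${\rm C}_L(\Z/p)$ via the ${\rm O}(L)$-equivariant bijection $\ell$, then invokes transitivity of ${\rm O}(L\otimes\Z/p)$ on isotropic lines together with the surjectivity of ${\rm O}(L)\to{\rm O}(L\otimes\Z/p)$ for $L$ unimodular. You go a bit further by sketching why that surjectivity holds (Cartan--Dieudonn\'e plus Hensel, or smoothness of ${\rm O}_L$), which the paper simply asserts.
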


Assume $V$ is isotropic, {\it e.g.} $\dim V \geq 3$.
%Note that $\mathcal{N}_p(L) \neq \emptyset$ implies that $L$ and $V$ are isotropic as well by the Hensel lemma. 
%\footnote{As similar argument even shows $|\mathcal{U}(V)|=1$ if $V$ is anisotropic.} 
Then we may choose $e,f \in L$ satisfying ${\rm q}(e)={\rm q}(f)=0$ and $e.f=1$. 
Then we have $L = (\Z_p \,e\, \oplus \,\Z_p\, f) \,\perp \,Q$ with $Q$ unimodular (and $\det Q \equiv - \det L$)
and
\begin{equation}\label{LNef}  N := (\Z_p \, pe \,\oplus \Z_p \,p^{-1}f\,) \,\perp\, Q\end{equation} 
clearly is a $p$-neighbor of $L$, with line $\ell(N)\,=\,\Z/p\, f$.
By Lemma \ref{transpnei}, any $p$-neighbor of $L$ has this form for a suitable choice of $e$ and $f$.

\begin{cor} \label{corsnpnei} If $g \in {\rm O}(V)$ satisfies $g(L) \in \mathcal{N}_p(L)$ then ${\rm sn}(g) \in p \Z_p^\times$.
\end{cor}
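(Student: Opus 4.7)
The plan is to use the explicit model of a $p$-neighbor provided by \eqref{LNef} to exhibit a concrete $g_0 \in {\rm O}(V)$ with $g_0(L) \in \mathcal{N}_p(L)$ and compute ${\rm sn}(g_0)$ directly, then leverage the transitivity of ${\rm O}(L)$ on $\mathcal{N}_p(L)$ (Lemma \ref{transpnei}) together with the constraint ${\rm sn}({\rm O}(L)) \subset \Z_p^\times/\Z_p^{\times,2}$ coming from Lemma \ref{integralspin} to deduce the result for all $g$.

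First I would fix a hyperbolic pair $e,f \in L$ with ${\rm q}(e)={\rm q}(f)=0$ and $e.f=1$ and the orthogonal decomposition $L = (\Z_p e \oplus \Z_p f) \perp Q$ as in the paragraph preceding \eqref{LNef}. Then I would introduce the explicit isometry $h \in {\rm O}(V)$ acting trivially on $Q$ and by $e \mapsto pe$, $f \mapsto p^{-1}f$ on the hyperbolic plane, so that $h(L) = N$ is precisely the $p$-neighbor of \eqref{LNef}. The key observation is that $h$ factors as a product of two reflections, namely $h = \tau_{e-f} \circ \tau_{e-pf}$, where $\tau_v$ denotes the orthogonal symmetry about $v$. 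One checks this by a direct two-line computation on $e$ and $f$, using ${\rm q}(e-f)=-1$ and ${\rm q}(e-pf)=-p$.

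By the formula ${\rm sn}(\tau_v) \equiv {\rm q}(v) \bmod \Q_p^{\times,2}$ recalled in \S\ref{sect:notations}(vi), this immediately yields
\[
{\rm sn}(h) \,\equiv\, {\rm q}(e-f)\cdot {\rm q}(e-pf) \,\equiv\, (-1)(-p) \,\equiv\, p \pmod{\Q_p^{\times,2}},
\]
so ${\rm sn}(h) \in p\Z_p^\times$, which settles the case $g=h$.

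For a general $g$ with $g(L) \in \mathcal{N}_p(L)$, Lemma \ref{transpnei} provides $k \in {\rm O}(L)$ with $k(N) = g(L)$, equivalently $\gamma := h^{-1}k^{-1}g \in {\rm O}(L)$. Since ${\rm sn}$ is a homomorphism to the $2$-torsion group $\Q_p^\times/\Q_p^{\times,2}$, we get ${\rm sn}(g) = {\rm sn}(k)\,{\rm sn}(h)\,{\rm sn}(\gamma)$, and by Lemma \ref{integralspin} both ${\rm sn}(k)$ and ${\rm sn}(\gamma)$ lie in $\Z_p^\times/\Z_p^{\times,2}$. Multiplying ${\rm sn}(h) \in p\Z_p^\times$ by two classes of unit valuation yields a class of odd $p$-adic valuation, i.e.\ ${\rm sn}(g) \in p\Z_p^\times$, as desired. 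The only delicate point of the argument is the factorization of $h$ as two reflections; once that is in hand everything else is formal, so I expect no real obstacle.
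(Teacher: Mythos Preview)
Your proof is correct and follows essentially the same approach as the paper: construct the explicit isometry $e\mapsto pe$, $f\mapsto p^{-1}f$, identity on $Q$, factor it as $\tau_{e-f}\circ\tau_{e-pf}$ to compute its spinor norm as $(-1)(-p)=p$, then reduce the general case via the transitivity of ${\rm O}(L)$ on $\mathcal{N}_p(L)$ (Lemma~\ref{transpnei}) and the fact that ${\rm sn}({\rm O}(L))\subset\Z_p^\times/\Z_p^{\times,2}$ (Lemma~\ref{integralspin}). The only cosmetic difference is the order of presentation: the paper first reduces to the explicit element and then computes, whereas you compute first and reduce afterwards.
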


\begin{pf} By Lemma \ref{transpnei}, there is $h \in {\rm O}(L)$ such that $N:=hg(L)$ is given by Formula \eqref{LNef}.
Define $g' \in {\rm O}(V)$ by $g'(e)=pe$, $g'(f)=e/p$ and $g'_{|Q}={\rm id}_Q$. We have $g'(L)=N=hg(L)$ so $hg \in g'{\rm O}(L)$, 
${\rm sn}(g) \in {\rm sn}(g') \Z_p^\times$ (Lemma \ref{integralspin}) and we may assume $g=g'$.
We conclude as $g'$
 is the composition of the two reflexions about $e-pf$ and $e-f$, and we have ${\rm q}(e-pf){\rm q}(e-f)=(-p)(-1)=p$.
\end{pf}

Consider now the free abelian group $\Z\,\mathcal{U}(V)$ over the set $\mathcal{U}(V)$. 
This is a $\Z[{\rm O}(V)]$-module. 
The {\it Hecke ring} of $\mathcal{U}(V)$ is its endomorphism ring 
$${\rm H}_V = {\rm End}_{\Z[{\rm O}(V)]}(\Z\, \mathcal{U}(V) ).$$
The choice of $L \in \mathcal{U}(V)$ gives a natural isomorphism between ${\rm H}_V $ and the convolution ring
of compactly supported functions $f : {\rm O}(V) \rightarrow \Z$ such that 
$f(kgk')=f(g)$ for all $g \in {\rm O}(V)$ and all $k,k' \in {\rm O}(L)$. 
Concretely, an element $T \in {\rm H}_V $ corresponds to the function $f$ defined by $TL \,=\, \sum \,f(g)\, gL$,
the (finite) sum being over the elements $g$ in ${\rm O}(V)/{\rm O}(L)$. 

\begin{definition} The {\rm $p$-neighbors operator} is the element ${\rm T}_p$ of ${\rm End}(\Z\,\mathcal{U}(V))$ defined by
${\rm T}_p L \, =\, \sum_{N \in \mathcal{N}_p(L)} \,N$. We clearly have ${\rm T}_p \in {\rm H}_V $.
\end{definition}

We know since Satake \cite{satake} that ${\rm H}_V $ is a commutative finitely generated ring. 
One simple reason for this commutativity is that the natural anti-involution $T \mapsto T^{\rm t}$, 
induced by transpose of correspondences (see {\it e.g.} \cite[\S 4.2.1]{chlannes}),
%\footnote{If we set $TL = \sum_N T_{N,L}N$, then we have $T^{\rm t}L = \sum_N T_{L,N} N$.}
is the identity of ${\rm H}_V$. 
This in turn follows from the fact that for any $L$ and $L'$ in $\mathcal{U}(V)$ we have an isomorphism
of abelian groups $L/(L\cap L') \simeq L'/(L \cap L')$  (use either the Cartan decomposition given by \cite[\S 9.1]{satake} or the direct argument given in \cite[Prop. 3.1.1 \& Prop. 4.2.8]{chlannes}).\ps

We denote by ${\rm Hom}({\rm H}_V,\C)$ the set of ring homomorphisms ${\rm H}_V \rightarrow \C$, 
also called {\it systems of Hecke eigenvalues}. 
There is a classical bijection $\chi \mapsto U(\chi)$ between ${\rm Hom}({\rm H}_V,\C)$ and the set of 
isomorphism classes of irreducible {\it unramified} $\C[{\rm O}(V)]$-modules $U$, {\it i.e.} such that 
the fixed subspace $U^{{\rm O}(L)}$ is nonzero for some
(hence all) $L \in \mathcal{U}(V)$. In this bijection, $U(\chi)^{{\rm O}(L)}$ is a line equipped with a natural action 
of ${\rm H}_V$, and it gives rise to the morphism $\chi$.\ps\ps

A second interpretation of ${\rm Hom}({\rm H}_V,\C)$ is given by the {\it Satake isomorphism}.
In Langlands's interpretation, this isomorphism takes the following form.
Consider\footnote{In the case $\dim V$ even, we conveniently make here a slight entorse to Langlands's widely used conventions, hoping the forgiveness of the knowledgable reader.} the complex algebraic group $\widehat{{\rm O}_V}$ defined by 
$\widehat{{\rm O}_V}={\rm Sp}_{2r}(\C)$ (standard complex symplectic group of rank $k$)
if $\dim V = 2r+1$ is odd, and by $\widehat{{\rm O}_V}={\rm O}_{2r}(\C)$ (standard complex orthogonal group of rank $k$) if $\dim V=2r$ is even. In all cases, we define ${\rm n}_V:=2r$ and have a natural, called {\it standard}, complex representation 
$${\rm St} : \widehat{{\rm O}_V} \rightarrow {\rm GL}_{{\rm n}_V}(\C).$$ 
The Satake isomorphism \cite{satake,cartier,grossatake}  induces a canonical bijection $\chi \mapsto {\rm c}(\chi)$, between ${\rm Hom}({\rm H}_V, \C)$
and the set of semi-simple conjugacy classes $c \subset \widehat{{\rm O}_V}$ 
satisfying furthermore $\det {\rm St}(c) =-1$ in the case $\dim V=2r$ is even and $(-1)^r \det V$
is not a square in $\Q_p^\times$, and $\det {\rm St}(c) =1$ otherwise. \ps

\begin{remark} \label{heckeSOV}
{\rm  (${\rm SO}$ versus ${\rm O}$)
For $L$ in $\mathcal{U}(V)$, the inclusion ${\rm O}(L) \subset {\rm O}(V)$ induces
an isomorphism ${\rm O}(L)/{\rm SO}(L) \simeq {\rm O}(V)/{\rm SO}(V)$ of groups of order $2$. 
It follows that ${\rm SO}(V)$ acts transitively on $\mathcal{U}(V)$. Define 
$${\rm H}'_V = {\rm End}_{\Z[{\rm SO}(V)]}(\Z\, \mathcal{U}(V)).$$
Again, the choice of an $L \in \mathcal{U}(V)$ identifies the ring ${\rm H}'_V$ with 
the convolution ring of compactly supported functions ${\rm SO}(L)\backslash {\rm SO}(V)/{\rm SO}(L) \rightarrow \Z$. 
Also, ${\rm H}'_V$ has a natural action of $\Z/2={\rm O}(V)/{\rm SO}(V)$, with fixed subring ${\rm H}_V$.
As observed by Satake, we have ${\rm H}'_V={\rm H}_V$ unless
 $\dim V=2r$ is even and $V$ has Witt index $r$.
The dual description given above of the Satake isomorphism for ${\rm H}_V$ follows 
then from that for the Hecke ring ${\rm H}'_V$ associated to the {\it connected} semisimple group scheme ${\rm SO}_L$ over $\Z_p$ for $L \in \mathcal{U}(V)$. See Scholium 6.2.4 in \cite{chlannes} for a discussion of the case  $\dim V=2r$ and $V$ has Witt index $r$.
}
\end{remark}

\begin{lemma} \label{stakeTp}  For any ring homomorphism $\chi : {\rm H}_V \rightarrow \C$ we have 
$$\chi({\rm T}_p) \,=\, p^{\frac{\dim V}{2}-1} \,{\rm Trace} \,{\rm St}({\rm c}(\chi)).$$
\end{lemma}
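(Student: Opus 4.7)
The plan is to identify $\mathrm{T}_p$ with the characteristic function of a single double coset $K g_0 K$ (with $K = \mathrm{O}(L)$), where $g_0 = \mu(p)$ for a minuscule cocharacter $\mu$ of $\mathrm{SO}_V$ dual to the standard representation $\mathrm{St}$, and then to apply the classical Satake formula in the minuscule case. Since $L$ is unimodular and $\dim V \geq 3$ (the cases $\dim V \leq 2$ can be checked directly), $L \otimes \F_p$ is non-degenerate of dimension $\geq 3$, hence isotropic over $\F_p$, and Hensel's lemma yields $e, f \in L$ with $\mathrm{q}(e) = \mathrm{q}(f) = 0$ and $e.f = 1$; then $L = (\Z_p e \oplus \Z_p f) \perp Q_0$ with $Q_0$ unimodular in $Q := (\Q_p e \oplus \Q_p f)^\perp$. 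Define $g_0 \in \mathrm{O}(V)$ by $e \mapsto pe$, $f \mapsto p^{-1} f$, $\mathrm{id}_Q$; then $N_0 := g_0(L)$ is the $p$-neighbor of $L$ from~\eqref{LNef}. By Lemma~\ref{transpnei}, $\mathcal{N}_p(L) = K \cdot N_0$, so under the identification of $\mathrm{H}_V$ with the convolution ring of $K$-bi-invariant $\Z$-valued functions on $\mathrm{O}(V)$, the operator $\mathrm{T}_p$ corresponds to $\mathbbm{1}_{K g_0 K}$.

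Observe now that $g_0 = \mu(p)$ for the cocharacter $\mu : \GGm \to \mathrm{SO}_V$ whose Witt-basis coordinates are $(1, 0, \dots, 0, -1)$. This is a minuscule cocharacter of the standard maximal torus of $\mathrm{SO}_V$, and via the usual duality between cocharacters of $\mathrm{SO}_V$ and characters of $\widehat{\mathrm{O}_V}$, the irreducible representation of highest weight $\mu$ is precisely the standard representation $\mathrm{St}$ (the standard $2r$-dimensional representation of $\mathrm{Sp}_{2r}(\C)$ in type $B_r$, and of $\mathrm{O}_{2r}(\C)$ in type $D_r$). The classical Satake formula for minuscule cocharacters then reads, for any $\chi \in \mathrm{Hom}(\mathrm{H}_V, \C)$,
\begin{equation*}
\chi(\mathbbm{1}_{K \mu(p) K}) \;=\; p^{\langle \rho, \mu \rangle} \cdot \mathrm{Trace}\, V_\mu(c(\chi)),
\end{equation*}
where $\rho$ is the half-sum of positive roots of $\widehat{\mathrm{O}_V}$ (see e.g.\ B.~Gross, \emph{On the Satake isomorphism}, \S 3; it also follows directly from the observation that the weights of a minuscule $V_\mu$ form a single Weyl orbit, so that the Satake transform reduces to $\delta_B^{1/2}(\mu(p))$ times the character of $V_\mu$). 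A direct computation gives $\langle \rho, \mu \rangle = r - \tfrac{1}{2}$ in type $B_r$ (where $\rho = (r - \tfrac{1}{2}, r - \tfrac{3}{2}, \dots, \tfrac{1}{2})$) and $\langle \rho, \mu \rangle = r - 1$ in type $D_r$ (where $\rho = (r-1, r-2, \dots, 0)$); in both cases this equals $\tfrac{\dim V}{2} - 1$, which is the exponent appearing in the lemma.

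The main technical subtlety is that $\mathrm{O}_V$ is disconnected while the minuscule Satake formula as stated above is classical for the connected group $\mathrm{SO}_V$. By Remark~\ref{heckeSOV}, outside the exceptional case $\dim V = 2r$ of full Witt index one has $\mathrm{H}_V = \mathrm{H}'_V$, and the formula for $\mathrm{SO}_V$ applies verbatim. In the exceptional case, the slightly non-standard convention $\widehat{\mathrm{O}_V} = \mathrm{O}_{2r}(\C)$ adopted in the text (together with the constraint $\det \mathrm{St}(c) = \pm 1$ on Satake parameters) is precisely what is needed: the trace of $\mathrm{St}$ remains well-defined on $\mathrm{O}_{2r}(\C)$-conjugacy classes, and the inclusion $\mathrm{H}_V \subset \mathrm{H}'_V$ matches the restriction to a given sign of $\det \mathrm{St}(c)$, so that the displayed formula persists uniformly (see Scholium~6.2.4 of~\cite{chlannes} for a closely related discussion).
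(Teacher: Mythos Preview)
Your approach coincides with the paper's in the split cases: both identify $\mathrm{T}_p$ with $\mathbbm{1}_{K\epsilon_1^*(p)K}$ via Lemma~\ref{transpnei} and \eqref{LNef}, then invoke the minuscule Satake formula (the paper cites \cite{grossatake} and \cite[\S 6.2.8]{chlannes} for this, exactly as you do).

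The gap is the case $\dim V = 2r$ even with Witt index $r-1$, which you do not single out and which is where the paper does all of its work. Here $\mathrm{SO}_V$ is quasi-split but \emph{non-split}: the maximal split torus $A$ has rank $r-1$ and the relative root system is $B_{r-1}$, not $D_r$. Your computation of $\langle\rho,\mu\rangle$ uses the absolute $D_r$ data; it happens to agree with the modulus-character exponent $\delta_B^{1/2}(\epsilon_1^*(p))=p^{r-1}$ computed in the relative system (with the short relative roots $\epsilon_i$ appearing with multiplicity $2$), but this is not verified. More seriously, your reference to Gross \S 3 covers split groups only. In the non-split case the Satake isomorphism lands in $\C[X_*(A)]^{W(B_{r-1})}$, while the paper's convention puts $c(\chi)$ in the $\det=-1$ component of $\mathrm{O}_{2r}(\C)$; identifying the $W(B_{r-1})$-orbit sum $\chi_{\epsilon_1^*}$ (an orbit of size $2(r-1)$) with $\mathrm{Trace}\,\mathrm{St}(c(\chi))$ requires the explicit bijection \eqref{specsat} and the characteristic polynomial formula \eqref{charpoly} that the paper establishes (the extra eigenvalues $\pm 1$ of $\mathrm{St}(c)$ cancel in the trace, which is why the count matches). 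Your final paragraph addresses only the $\mathrm{O}$ versus $\mathrm{SO}$ issue and the full-Witt-index case, so this quasi-split non-split case remains unjustified.
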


\begin{pf}
If $V$ has Witt index $[\dim V/2]$,
the lemma is a consequence of Formula~\eqref{LNef} and of Gross's 
exposition of the Satake isomorphism for split reductive groups over $\Q_p$ in \cite{grossatake}: 
see \S 6.2.8 in \cite{chlannes}, especially Formula (6.2.5) in the case $\dim V$ even, 
and the formula in the middle of page 160 in the case $\dim V$ odd.
Assume now $\dim V=2r$ is even and $V$ has Witt index $r-1$. 
We provide a proof in this case as we are not aware of any reference. \ps
{\scriptsize
Note that we have ${\rm H}_V={\rm H}'_V$, so we may use the description of the Satake isomorphism
given in Sect. II \S 6 \& 7 of \cite{borel} and Sect. III and IV of \cite{cartier}.
For $r=1$ we have ${\rm T}_p=0={\rm Trace} \,{\rm St}({\rm c})$ for all $c \in \widehat{{\rm O}_V}$, 
so we may assume $r\geq 2$. 
Fix $L \in \mathcal{U}(V)$. By assumption on $V$, we can find 
elements $e_i$ and $f_i$ of $L$, $1\leq i \leq r-1$, with $e_i.e_j=f_i.f_j=0$ and $e_i.f_j=\delta_{i,j}$
for all $i,j$. We have $L = L_0 \perp \bigoplus_{1\leq i \leq r-1}^{\perp} (\Z_p e_i \oplus \Z_p f_j)$,
with $L_0$ unimodular, anisotropic, of rank $2$ over $\Z_p$. 
The subgroup $T$ of ${\rm SO}(V)$ stabilizing each line $\Q_p e_i$ and $\Q_p f_j$
is a maximal torus, and its subgroup $A$ acting trivially on $V_0=L_0 \otimes \Q_p$ is a maximal split torus.
The rational characters\footnote{
We denote as usual by ${\rm X}^\ast(S)$ and ${\rm X}_\ast(S)$ respectively 
the groups of characters and cocharacters of a torus $S$, 
and by $\langle-,-\rangle$ the perfect pairing 
${\rm X}_\ast(S) \otimes {\rm X}^\ast(S) \rightarrow \Z$.} 
$\epsilon_i \in {\rm X}^\ast(A)$, defined for $i=1,\dots,r-1$ by $te_i= \epsilon_i(t) e_i$, form a $\Z$-basis of ${\rm X}^\ast(A)$; 
we denote by $\epsilon_i^\ast \in {\rm X}_\ast(A)$ its dual $\Z$-basis.
The subgroup of ${\rm SO}(V)$ preserving $\sum_{j \leq i} \Q_p e_j$ for all $i$ is a Borel subgroup $B$, 
with unipotent radical denoted by $N$.
The (positive) roots of $A$ in ${\rm Lie} \,B$ are the $\epsilon_i \pm \epsilon_j$ with $i <j$ (with root space of dimension $1$), and the $\epsilon_i$ (with root space of dimension $2$), so that the root system of $A$ is of type ${\bf B}_{r-1}$. We denote by $W$ its Weyl group.
The Satake isomorphism is an isomorphism
\begin{equation} \label{satakecrude} {\rm Sat}: {\rm H}_V \otimes \C \isomo \C[{\rm X}_\ast(A)]^{\rm W}. \end{equation} 
%By Lemma~\ref{}, the element ${\rm T}_p$ corresponds to the characteristic function of ${\rm O}(L)g{\rm O}(L)$
%with $g=\epsilon_1^\ast(p)$. 
An element $\lambda = \sum_i \lambda_i \epsilon_i^\ast$ in ${\rm X}_\ast(A)$ is $B$-dominant
if we have $\lambda_1 \geq \lambda_2 \geq \cdots \geq \lambda_{r-1} \geq 0$.
For each such $\lambda$ we denote by $\chi_\lambda \in \Z[{\rm X}_\ast(A)]^{\rm W}$ 
the sum of the elements in the $W$-orbit of $\lambda$, and by 
${\rm T}_\lambda \in {\rm H}'_V$ the element associated to the characteristic function 
of ${\rm SO}(L) \lambda(p) {\rm SO}(L)$. 
By \cite[p. 53]{satake}, we have ${\rm Sat}({\rm T}_\lambda) \,= \,\sum_\mu \,{\rm c}_{\lambda; \mu}\, \chi_\mu$, 
where $\mu$ runs among the dominant weights with $(\mu_i) \leq (\lambda_i)$ for the lexicographic ordering $\leq$ on $\Z^{r-1}$,
for some constants ${\rm c}_{\lambda; \mu}$.  
By the remark at the top of p. 54 {\it loc. cit.}, we have ${\rm c}_{\lambda; \lambda}=\delta(\lambda(p))^{-1/2}$, 
with $\delta$ the modulus character of $B$.
Set $\lambda=\epsilon_1^\ast$. We have ${\rm T}_\lambda={\rm T}_p$ by Lemma \ref{LNef}. 
Using the root space decomposition recalled above, we see $\delta(\lambda(p))^{-1/2}=p^{r-1}$.
Also, for a dominant $\mu$ we have $\mu \leq \lambda$ if, and only if, $\mu=0$ or $\mu=\lambda$. 
On the other hand, we have ${\rm SO}(L)\lambda(p){\rm SO}(L)\, \cap \,N \,{\rm SO}(L)=\emptyset$
since the spinor norm of ${\rm O}(V)$ is trivial on the divisible group $N$, and is $p$ on $\lambda(p)$.
By definition of {\rm Sat}, this shows ${\rm c}_{\lambda;0}=0$ and
\begin{equation} \label{sattpinterm}  {\rm Sat}({\rm T}_p) \, = \, p^{r-1} \, \chi_{\epsilon_1^\ast}.\end{equation}
On the dual side, we also fix a $\C$-basis $e'_1,\dots,e'_r,f'_1,\dots,f'_r$ of the quadratic space $\C^{2r}$ with
$e'_i.e'_j=f'_i.f'_j=0$ and $e'_i\cdot f'_j=\delta_{i,j}$ for all $i,j$, which defines a maximal torus 
$\widehat{T}$ and a Borel subgroup $\widehat{B}$ of ${\rm SO}_{2r}(\C)$, as well as an order $2$ element
$\sigma \in {\rm O}_{2r}(\C)$ fixing $e'_i$ and $f'_j$ for each $i,j<r$, and exchanging $e'_r$ and $f'_r$.
We have a $\Z$-basis $\epsilon'_i$ of ${\rm X}^\ast(\widehat{T})$ defined by $t.e'_i=\epsilon'_i(t)e'_i$.
By definition of the Langlands dual, we have a given isomorphism $\iota : {\rm X}_\ast(T) \isomo {\rm X}^\ast(\widehat{T})$
sending $\epsilon_i^\ast$ to $\epsilon'_i$ for $i=1,\dots,r-1$. Also, the subgroup $U \subset \widehat{T}$ 
of elements of the form $\sigma t \sigma^{-1} t^{-1}$ with $t \in \widehat{T}$ coincides with $\cap_{1 \leq i \leq r-1}{\rm ker}\, \epsilon'_i$,
and if we set $\widehat{A}=\widehat{T}/U$ then $\iota$ defines a isomorphism ${\rm X}_\ast(A) \simeq {\rm X}^\ast(\widehat{A})$. It is a straightforward exercise to check that any semisimple element $g \in {\rm O}_{2r}(\C)$ with $\det g=-1$ is ${\rm O}_{2r}(\C)$-conjugate to an element of the subset $\widehat{T} \sigma$. Moreover, two semisimple elements of ${\rm O}_{2r}(\C)$ are conjugate if, and only if, they have the same characteristic polynomial. For $t\sigma \in \widehat{T}\sigma$, this characteristic polynomial  is 
\begin{equation} \label{charpoly} {\rm det} ( X - {\rm St}(t\sigma) ) = (X-1)(X+1) \prod_{i=1}^{r-1}(X - \epsilon'_i(t))(X - \epsilon'_i(t)^{-1}).\end{equation}
It follows that for $t,t' \in \widehat{T}$ we have an equivalence between: (i) $t \sigma$ and $t' \sigma$ are 
conjugate in ${\rm O}_{2r}(\C)$, and (ii) the images of $t$ and $t'$ in $\widehat{A}$ are conjugate under $W$ (which acts naturally on ${\rm X}^\ast(\widehat{A})$ hence on $\widehat{A}$). As a consequence, the inclusion $\mu : \widehat{T}\sigma \rightarrow {\rm O}_{2r}(\C)^{\det =-1}$ and the projection $\nu : \widehat{T} \sigma \rightarrow \widehat{A}, t\sigma \mapsto tU,$ define a bijection 
\begin{equation} \label{specsat} \mu \circ \nu^{-1} : \widehat{A}/W \isomo {\rm O}_{2r}(\C)^{\det =-1}/{\rm O}_{2r}(\C).\end{equation}
The bijection $\chi \mapsto {\rm c}(\chi)$ is defined by composing \eqref{specsat}, together with \eqref{satakecrude} and the natural bijection between the sets
${\rm Hom}(\C[{\rm X}_\ast(A)]^{\rm W},\C)$ and $\widehat{A}/W$ defined by $\iota$, 
In particular, if $f : {\rm O}_{2r}(\C) \rightarrow \C$ denotes a polynomial class function, then the map 
$t \mapsto f(t\sigma)$ factors through a $W$-invariant polynomial map $\widehat{A} \rightarrow \C$, 
or equivalently using $\iota$, through an element of $\C[{\rm X}_\ast(A)]^W$ that we denote by $f_\ast$.
In the case $f$ is the trace of the strandard representation ${\rm St}$ of ${\rm O}_{2r}(\C)$, we see
$$f_\ast = \chi_{\epsilon_1^\ast},$$
and conclude by \eqref{sattpinterm}.
}
\end{pf}

We end this discussion by recalling the Satake parameters of the $1$-dimen\-sional unramified 
$\C[{\rm O}(V)]$-modules, or equivalently, of the group homomorphisms $\mu : {\rm O}(V) \rightarrow \C^\times$
with $\mu({\rm O}(L))=1$ for some (hence all) $L \in \mathcal{U}(V)$. 
Assume $\dim V \geq 3$. By Lemmas \ref{spinornormOlemma} \& \ref{integralspin}, there are exactly 
$2$ such characters, namely the trivial character $1$ and the character $\eta$ defined by
$$\eta(g) = (-1)^{{\rm v}_p( {\rm sn}(g))}$$
where ${\rm v}_p(x)$ denotes the $p$-adic valuation of the element $x \in \Q_p^\times$. 
Recall ${\rm n}_V=2r$. We denote by $\Delta \subset \widehat{{\rm O}_V}$ the unique semi-simple conjugacy class such that the eigenvalues of ${\rm St}(\Delta)$ are:\ps

-- the $2r$ positive square roots $p^{ \pm \frac{2i-1}{2} }$ with $i=1,\dots,r$ for $\dim V$ odd, \ps
-- the $2r-1$ integers $p^i$ for $|i|\leq r-1$, and the element $1$ (resp. $-1$) if $\dim V=2r$ is even and $V$ has Witt index $r$ (resp. $r-1$).\ps

%\noindent The following lemma is also well-known.

\begin{lemma}\label{sataketrivialeta} 
Assume $\dim V \geq 3$ and that $\chi : {\rm H}_V \rightarrow \C$ satisfies ${\rm c}(\chi)=\Delta$ {\rm (}resp. 
${\rm c}(\chi)=-\Delta${\rm )}. Then we have ${\rm U}(\chi) \simeq 1$ {\rm (}resp. ${\rm U}(\chi) \simeq \eta${\rm )}. 
\end{lemma}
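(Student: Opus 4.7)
The plan is to identify the two Satake conjugacy classes ${\rm c}(\chi_1)$ and ${\rm c}(\chi_\eta)$ in $\widehat{{\rm O}_V}$ associated to the unique two unramified one-dimensional ${\rm O}(V)$-modules described just before the lemma (namely $1$ and $\eta$), and to match them with $\Delta$ and $-\Delta$ respectively. The corresponding ring homomorphisms $\chi_1, \chi_\eta : {\rm H}_V \to \C$ are the ones sending $T \leftrightarrow f$ (with $TL = \sum f(g)\, gL$) to $\sum_g f(g)$ and $\sum_g \eta(g) f(g)$.

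First I would reduce the problem to identifying ${\rm c}(\chi_1)$, using a twisting argument. The assignment $\iota: f \mapsto \eta\cdot f$ (pointwise product) preserves bi-${\rm O}(L)$-invariant functions on ${\rm O}(V)$ since $\eta({\rm O}(L))=1$, and is an algebra automorphism of ${\rm H}_V$ because $\eta$ is a group character; moreover $\chi_\eta = \chi_1 \circ \iota$. For each Cartan double-coset generator $T_\lambda$ one has $\iota(T_\lambda) = \eta(\lambda(p))\,T_\lambda$, and a short computation with orthogonal symmetries in the spirit of Corollary~\ref{corsnpnei} shows that $\eta(\epsilon_i^\ast(p)) = -1$ for each $i$ (the cocharacter $\epsilon_i^\ast(p)$ is a product of two reflections with spinor norms $-1$ and $-p$). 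Hence $\eta(\lambda(p)) = (-1)^{\sum_i \lambda_i}$ for $\lambda = \sum_i \lambda_i \epsilon_i^\ast$. Under the Satake isomorphism described in the proof of Lemma~\ref{stakeTp}, this involution of ${\rm H}_V$ matches the effect on $\C[X_\ast(A)]^W$ of translation on $\widehat{A}$ by the nontrivial central element $-I \in \widehat{{\rm O}_V}$, whence ${\rm c}(\chi_\eta) = -\,{\rm c}(\chi_1)$.

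To establish ${\rm c}(\chi_1) = \Delta$, I would match the leading term of the Satake transform against the explicit value of $\chi_1$. For $\lambda$ dominant and regular, $\chi_1(T_\lambda) = |{\rm O}(L)\lambda(p){\rm O}(L)/{\rm O}(L)|$, whose leading $p$-power is $\delta_B(\lambda(p))^{-1}$, whereas ${\rm Sat}(T_\lambda) = \delta_B(\lambda(p))^{-1/2}\chi_\lambda + (\text{lower})$ by the computations of the proof of Lemma~\ref{stakeTp}. Comparison forces $\epsilon'_i({\rm c}(\chi_1)) = p^{\rho(\epsilon_i^\ast)}$, where $\rho$ is the half-sum, counted with multiplicity, of positive roots of $A$ in ${\rm Lie}\,B$. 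Using the explicit root data recalled in the proof of Lemma~\ref{stakeTp} and treating separately the three cases ($\dim V$ odd, or $\dim V = 2r$ with Witt index $r$ or $r-1$), an elementary verification shows that the eigenvalues of ${\rm St}({\rm c}(\chi_1))$ agree with those in the definition of $\Delta$. As a consistency check, the trace identity $\chi_1(T_p) = |{\rm C}_L(\Z/p)| = p^{\dim V/2 - 1}\,{\rm Tr}\,{\rm St}(\Delta)$ agrees with Lemma~\ref{stakeTp}, and $\chi_\eta(T_p) = -\chi_1(T_p)$ (from Corollary~\ref{corsnpnei}) agrees with ${\rm Tr}\,{\rm St}(-\Delta) = -{\rm Tr}\,{\rm St}(\Delta)$.

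The main obstacle is that the single equality on $T_p$ alone cannot pin down ${\rm c}(\chi_1)$ since traces do not separate semisimple conjugacy classes; the argument really requires the full Satake/leading-term analysis above, or equivalently the classical identification of the spherical function of the trivial representation (Satake, Macdonald) which gives ${\rm c}(\chi_1)$ as the image of $\delta_B^{1/2}$ under the Satake duality—that is, $\Delta$ in our notation. Once both Satake parameters are pinned down, the bijection $\chi \mapsto {\rm U}(\chi)$ recalled at the start of the section yields the claimed isomorphism types ${\rm U}(\chi_1) \simeq 1$ and ${\rm U}(\chi_\eta) \simeq \eta$.
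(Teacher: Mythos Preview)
Your proposal is essentially correct and close in spirit to the paper's argument. Both ultimately rest on the same two ingredients: the classical fact that the trivial representation is the unramified constituent of the principal series $I(\delta^{-1/2})$ (so its Satake parameter is the image of $\delta^{1/2}$, which is $\Delta$), and the computation $\eta(\epsilon_i^\ast(p))=-1$ for all $i$. The paper treats $1$ and $\eta$ in parallel by embedding each into $I(\delta^{-1/2})$ and $I(\delta^{-1/2}\eta)$ and then reading off the eigenvalues of ${\rm St}({\rm c}(\chi))$ directly; you instead first reduce $\eta$ to $1$ via the twisting automorphism $f\mapsto \eta f$ of ${\rm H}_V$ (a clean observation), and then identify ${\rm c}(\chi_1)$.

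One caveat: your middle paragraph's ``leading-term'' argument (matching the top $p$-power of $|K\lambda(p)K/K|$ against $\delta_B^{-1/2}\chi_\lambda$) is not a proof as written, since the lower terms in ${\rm Sat}(T_\lambda)$ also contribute at the putative point and there is no a priori bound forcing the comparison you want. You are right to flag this yourself and to fall back on the principal-series identification; that is exactly what the paper does. If you want a self-contained argument avoiding the citation, simply note (as the paper does) that $1\subset I(\delta^{-1/2})$ and read off $\nu(\epsilon_i^\ast(p))$ from the root-space description, rather than trying to reverse-engineer ${\rm c}(\chi_1)$ from asymptotics of double-coset volumes.
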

{\scriptsize
\begin{pf} This is well-known but we provide a proof as we could not find unsophisticated references.
To fix ideas we assume $\dim V=2r$ is even and $V$ has Witt index $r-1$ (the most complicated case), and use the notations introduced during the proof of Lemma~\ref{stakeTp}.
To any unramified character $\nu :  T \rightarrow \C^\times$ (see \S 3.2 in \cite{cartier}), and to the Borel subgroup $B=TN$, 
may associate an induced representation denoted by $I(\nu)$ {\it loc. cit.} \S 3.3. This representation contains a unique unramified subquotient, necessarily of the form ${\rm U}(\chi)$ for some unique $\chi \in {\rm Hom}({\rm H}_V,\C)$. Following \cite{cartier} \S 4.3, the elements $\nu$ and ${\rm c}(\chi)$ are related as follows. The character $\nu$
defines by restriction an unramified character of the split torus $A$, hence factors through the 
surjection ${\rm ord}_A : A\rightarrow {\rm X}_\ast(A)$ defined by Formula (S) p. 134 {\it loc. cit.}
It may thus be viewed as an element of ${\rm Hom}({\rm X}_\ast(A),\C^\times)$, or equivalently of $\widehat{A}$ using the dual interpretation: 
the $W$-orbit of this element is ${\rm c}(\chi)$ by Formula (35) {\it loc. cit.} and  Formula \eqref{specsat}.
The trivial representation $1$ is obviously a subspace of the induced representation $I(\nu)$ with $\nu:=\delta^{-1/2}$ and $\delta$ the modulus character of $B$. As a consequence, the character $\eta$ is a subspace of $I(\nu)\otimes \eta = I( \nu \eta)$
where $\eta$ is viewed as a character of $T$ by restriction. A trivial computation using the description of root spaces of $A$ given in the proof of Lemma~\ref{stakeTp} shows $\nu(\epsilon_i^\ast(p))=p^{r-i}$ for $i=1,\dots,r-1$.
On the other hand, we have $\eta (\epsilon_i^\ast(p)) = -1$ for all $i=1,\dots r-1$ by Lemma \ref{corsnpnei}. 
As a consequence, by \eqref{charpoly} the $2r$ eigenvalues of the element ${\rm St}({\rm c}(\chi))$ are $1,-1$, together with the $2r-2$ elements 
$\epsilon p^{\pm i}$ with $i=1,\dots,r$, and $\epsilon=1$ in the case ${\rm U}(\chi)\simeq 1$, and $\epsilon=-1$ in the case ${\rm U}(\chi) \simeq \eta$. 
%The lemma follows.
\end{pf}
}

\section{Setting and statement of the main theorem}

\subsection{Class sets of lattices with level structures}\label{classsets}

Let $V$ be a non-degenerate quadratic space over $\Q$.
We fix a compact open subgroup $K$ of ${\rm O}_V(\AAA_f)$ and consider the sets 
\begin{equation}\label{defgenKXK} {\rm Gen}(K) = {\rm O}_V(\AAA_f) / K\, \, \,\,\, {\rm and}\, \, \,\, {\rm X}(K) = {\rm O}(V) \backslash {\rm Gen}(K).
\end{equation}
For any finite set $S$ of primes, we have rings $\Q_S = \prod_{p \in S} \Q_p$ and $\Z_S = \prod_{p \in S} \Z_p$ and a quadratic space $V_S = \prod_{p \in S} V_p$ over $\Q_S$.
We may and do fix such an $S$ such that we have $K=K_S \times \prod_{p \notin S} K_p$,
with compact open subgroups $K_S \subset {\rm O}(V_S)$ and $K_p \subset {\rm O}(V_p)$ for $p \notin S$.
We also set 
\begin{equation}\label{defgenkp} {\rm Gen}(K_p)={\rm O}(V_p)/K_p\end{equation} 
for each prime $p \notin S$, and ${\rm Gen}(K_S)={\rm O}(V_S)/K_S$.
Both ${\rm Gen}(K)$, ${\rm Gen}(K_S)$ and the ${\rm Gen}(K_p)$ are pointed sets (with distinguished points $K, K_S$ and the $K_p$), and we have a natural restricted product decomposition of pointed sets
\begin{equation}\label{locdecGen}  {\rm Gen}(K) = {\rm Gen}(K_S) \times \prod'_{p \notin S}  {\rm Gen}(K_p). \end{equation}
\par \noindent
Before going further, we recall how to interpret these sets in concrete cases.\ps

\begin{example}  \label{gengenus} {\rm (}Lattice case{\rm)}
{\rm
Recall, following Eichler and Weyl, 
that there is an action of the group ${\rm O}_V(\AAA_f)$ 
on the set of $\Z$-lattices $L$ in $V$, say $(g,L) \mapsto g(L)$, 
uniquely determined by the property  $g(L)_p=g_p(L_p)$ for each prime $p$. 
Fix $L \subset V$ a $\Z$-lattice\footnote{
We only have ${\rm q}(L) \subset \frac{1}{N}\Z$ for some $N \geq 1$ a priori, but
we may even assume ${\rm q}(L) \subset \Z$ if we like,
replacing $L$ with $NL$.}, 
set $K= \prod_p K_p$ with $K_p={\rm O}(V_p)$, and take $S=\emptyset$. 
Then the map $g \mapsto g(L)$ defines an ${\rm O}(V)$-equivariant bijection 
\begin{equation}\label{GenKGenL}  {\rm Gen}(K) \isomo {\rm Gen}_\Q(L) \end{equation}
where ${\rm Gen}_\Q(L)$ denotes the genus of $L$ in $V$.
This bijection identifies ${\rm X}(K)$ with the set of isometry classes of lattices in this genus. 
For each prime $p$, we also define ${\rm Gen}(L_p)$ as the set of $\Z_p$-sublattices of $V_p$ isometric to $L_p$. Both ${\rm Gen}_\Q(L)$ and the ${\rm Gen}(L_p)$ have a distinguished point, namely $L$ and the $L_p$, and the decomposition \eqref{locdecGen} identifies ${\rm Gen}(K)$ componentwise with ${\rm Gen}_\Q(L) = \prod'_p {\rm Gen}(L_p)$
under $L' \mapsto (L'_p)_p$.  
}
\end{example}

\begin{example}  \label{gengenuslevel}  {\rm (}Level structure case{\rm)}
{\rm
Fix $L \subset V$ a lattice, a finite set $S$ of primes, choose a compact open subgroup 
$K_S \subset {\rm O}(L_S)$, with $L_S = \prod_{p \in S} L_p$, and set $K= K_S \times \prod'_{p \notin S} {\rm O}(L_p)$. For $L' \subset V$ in the genus of $L$, the set ${\rm Isom}(L_S,L'_S)$ 
of $\Z_S$-linear isometries $L_S \rightarrow L'_S$ is a principal homogeneous space
under ${\rm O}(L_S)$. Define a {\it $K_S$-level structure} on $L'$
as a $K_S$-orbit of elements in ${\rm Isom}(L_S,L'_S)$.
The identity $L_S \rightarrow L_S$ defines a canonical $K_S$-structure $\alpha_0$  on $L_S$ 
and the map $g \mapsto (g(L), \alpha_0 \circ g^{-1})$ trivially is 
a bijection between ${\rm Gen}(K)$ and the set of all pairs
$(L',\alpha)$ with $L' \in {\rm Gen}_\Q(L)$ and $\alpha$ a $K_S$-level structure on $L'$.\ps

}
\end{example}

Note that a compact open subgroup of ${\rm O}(V_S)$ always fixes some $\Z_S$-lattice $L_S$ in $V_S$, hence is included in ${\rm O}(L_S)$. It follows that for any given $K$, 
we may always find $L \subset V$ and $S$ such that $K$ (hence ${\rm Gen}(K)$) is of the form studied in Example \eqref{gengenuslevel}.
A familiar example is the following. 

\begin{example} {\rm (}Principal level structures{\rm)}
{\rm
Fix a lattice $L \subset V$ and $N\geq 1$ an integer with $L_p$ unimodular in $V_p$ for each $p$ dividing $N$. Set $K = \prod_p K_p$ with $K_p \,=\, {\rm ker}\,( {\rm O}(L_p) \rightarrow {\rm O}(L_p \otimes \Z/N)\,)$
for all $p$ (hence $K_p={\rm O}(L_p)$ for $p \nmid N$). Then ${\rm Gen}(K)$ naturally identifies with the
set of pairs $(L',\alpha)$ with $L' \in {\rm Gen}_\Q(L)$ and $\alpha : L  \otimes \Z/N\Z \isomo L' \otimes \Z/N\Z$ an isomorphism  of quadratic spaces over $\Z/N\Z$
(use that ${\rm O}(L_p) \rightarrow {\rm O}(L \otimes \Z/p)$ is surjective for $p \nmid N$).}
\end{example}

\subsection{Spinor genera of $K$ and their mass}

We fix $V$ and $K$ as in \S \ref{classsets}.
By the discussion above, together with the finiteness of isometry classes of lattices in a given genus,
${\rm X}(K)$ is a finite set. 
By Lemmas \ref{spinornormOlemma} \& \ref{integralspin},  $\mu({\rm Spin}_V(\AAA_f))$ is a normal subgroup of ${\rm O}_V(\AAA_f)$
with abelian quotient. This shows that ${\rm O}(V)\mu({\rm Spin}_V(\AAA_f))K$ is a normal subgroup 
of ${\rm O}_V(\AAA_f)$ as well, with quotient group
\begin{equation} \label{spinorquot} {\rm S}(K) = {\rm O}(V)\backslash {\rm O}_V(\AAA_f)/\mu({\rm Spin}_V(\AAA_f))K.\end{equation}
We have a natural projection 
\begin{equation} 
\label{projspin} {\rm s}: {\rm X}(K) \rightarrow {\rm S}(K).
\end{equation}
For $x \in {\rm X}(K)$, its class ${\rm s}(x)$ in ${\rm S}(K)$ is called the {\it spinor genus} of $x$; these classes partition ${\rm Gen}(K)$ into the {\it spinor genera} of $K$. 
The group ${\rm S}(K)$ of spinor genera of $K$ is a finite elementary abelian $2$-group, 
which is dual to the group denoted $\Sigma(K)$ in \S \ref{prelimspin}.
We have a natural isomorphism ${\rm S}(K) \simeq \Delta/ \iota( {\rm O}(V)\, K )$ deduced from \eqref{idquotosp}, 
hence a natural exact sequence \begin{equation} 
\label{exactS12}
1 \rightarrow {\rm S}_1(K) \rightarrow {\rm S}(K) \rightarrow {\rm S}_2(K) \rightarrow 1,
\end{equation}
defined by this isomorphism and \eqref{exactDelta}.
\ps

In the case $V_\infty$ is isotropic of dimension $>2$, the strong approximation theorem shows 
%${\rm Spin}_V(\AAA_F) = {\rm Spin}(V) \mu^{-1}(x K x^{-1})$ for all $x \in {\rm O}_V(\AAA_f)$, hence 
that 
${\rm s}: {\rm X}(K) \rightarrow {\rm S}(K)$ is bijective (Kneser).
As our main results trivially hold in this case, we assume now
 $V_\infty$ is positive definite. 
 In this case ${\rm O}(V_\infty)$ is compact, so 
 ${\rm O}(V)$ is a discrete subgroup of ${\rm O}_V(\AAA_f)$.
 We choose the Haar measure on the unimodular locally 
 compact group ${\rm O}_V(\AAA_f)$ such that $K$ has measure $1$.
 We then equip ${\rm O}(V)$ with the counting measure, which endows ${\rm O}(V) \backslash {\rm O}_V(\AAA_f)$ 
with a finite measure denoted by ${\rm m}$. For $x \in {\rm O}_V(\AAA_f)$, we denote by $\Gamma_x$ the finite group ${\rm O}(V) \cap x K x^{-1}$. The measure of the subset ${\rm O}(V)xK$
of ${\rm O}(V) \backslash {\rm O}_V(\AAA_f)$ is then
$${\rm m}({\rm O}(V) x K ) = \frac{1}{|\Gamma_x|}.$$
It only depends on the class of $x$ in ${\rm X}(K)$, so it makes sense to write $|\Gamma_x|$ for $x \in {\rm X}(K)$. We have then
\begin{equation}
\label{defmk} 
{\rm m}_K:={\rm m}({\rm O}(V) \backslash {\rm O}_V(\AAA_f)) = \sum_{x \in {\rm X}(K)} \frac{1}{|\Gamma_x|}.
\end{equation} 
The following lemma is presumably well-known.

\begin{lemma} 
\label{massgenspin}
The spinor genera of $K$ all have the same measure $\frac{{\rm m}_K}{|{\rm S}(K)|}$.
\end{lemma}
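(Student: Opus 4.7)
The plan is to exhibit a measure-preserving right action on ${\rm O}(V)\backslash {\rm O}_V(\AAA_f)$ that permutes the spinor genera transitively; equidistribution of mass is then forced.

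First I would set $G = {\rm O}_V(\AAA_f)$ and $H = \mu({\rm Spin}_V(\AAA_f))$. By Lemmas \ref{spinornormOlemma} and \ref{integralspin}, $H$ is a normal subgroup of $G$ with abelian quotient $G/H$. Since $G/H$ is abelian, every subgroup of $G/H$ is normal; applying this to the image of $K$ in $G/H$ shows that $HK$ is a normal subgroup of $G$. This normality is the key structural fact, and makes the definition \eqref{spinorquot} literally the quotient of $G$ by a normal subgroup, further quotiented on the left by the image of ${\rm O}(V)$.

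Next I would use that $G$ is unimodular (being a restricted product of the reductive, hence unimodular, groups ${\rm O}(V_p)$). Therefore right translation $\rho_{g_0}: g \mapsto gg_0$ preserves the Haar measure normalized so that $K$ has measure $1$. Since $\rho_{g_0}$ commutes with the left action of ${\rm O}(V)$, it descends to a bijection of ${\rm O}(V)\backslash G$ that preserves the measure ${\rm m}$ defined just before \eqref{defmk}. By the normality of $HK$ we have $g^{-1}HK = HKg^{-1}$ for every $g \in G$, so
\begin{equation*}
\rho_{g_0}\bigl({\rm O}(V)\, g\, HK\bigr) \;=\; {\rm O}(V)\, g\, HK\, g_0 \;=\; {\rm O}(V)\, g g_0\, HK.
\end{equation*}
Thus $\rho_{g_0}$ permutes the spinor genera of $K$, and by choosing $g_0 = g^{-1}g'$ one can send any spinor genus onto any other. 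All spinor genera therefore have the same ${\rm m}$-measure, and since they partition ${\rm O}(V)\backslash G$ into $|{\rm S}(K)|$ pieces of total measure ${\rm m}_K$ by \eqref{defmk}, each has measure ${\rm m}_K/|{\rm S}(K)|$.

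The only point that requires care is the bookkeeping that identifies the ``spinor genus'' of $x \in {\rm X}(K)$, defined via the map ${\rm s}$ in \eqref{projspin}, with the subset ${\rm O}(V)\,\tilde x\, HK$ of ${\rm O}(V)\backslash G$ and its ${\rm m}$-mass with $\sum_{y \in {\rm s}^{-1}({\rm s}(x))} 1/|\Gamma_y|$. This is immediate from the definitions of ${\rm m}$ and of ${\rm S}(K)$ once one writes ${\rm O}(V)\tilde x HK$ as a union of $K$-cosets, and I would expect this to be the main point to spell out cleanly; everything else reduces to normality of $HK$ plus unimodularity.
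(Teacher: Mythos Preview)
Your proof is correct and is essentially the same as the paper's: the paper's one-sentence argument is exactly that right multiplication by $y \in {\rm O}_V(\AAA_f)$ preserves the Haar measure, commutes with left multiplication by ${\rm O}(V)$, and sends the spinor genus ${\rm O}(V)\,x\,\mu({\rm Spin}_V(\AAA_f))K$ to ${\rm O}(V)\,xy\,\mu({\rm Spin}_V(\AAA_f))K$. You have simply spelled out the normality of $HK$ and the unimodularity that the paper leaves implicit.
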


\begin{pf} The right multiplication by $y \in {\rm O}_V(\AAA_f)$ preserves the Haar measure, commutes with left-multiplication by ${\rm O}(V)$, and sends the spinor genus ${\rm O}(V) x\, \mu({\rm Spin}_V(\AAA_f)) K$ to ${\rm O}(V) xy  \,\mu({\rm Spin}_V(\AAA_f)) K$. 
\end{pf}

%Assume from now on $\dim V \geq 3$. By Lemmas \ref{spinornormOlemma} \& \ref{integralspin},  $\mu({\rm Spin}_V(\AAA_f))$ is a normal subgroup of ${\rm O}_V(\AAA_f)$
%with abelian quotient. This shows that ${\rm O}(V)\mu({\rm Spin}_V(\AAA_f))K$ is a normal subgroup 
%of ${\rm O}_V(\AAA_f)$ as well, with quotient group ${\rm S}(K)$. For this group structure we also have
%\begin{equation} \label{defsigmaK} \Sigma(K)={\rm Hom}({\rm S}(K),\C^\times), \end{equation}
%so that ${\rm S}(K)$ is a rather concrete finite abelian $2$-group by Corollary \ref{} and the discussion there. In the case $V_\infty$ is isotropic, the strong approximation theorem shows that 
%${\rm pr}: {\rm X}(K) \rightarrow {\rm S}(K)$ is bijective. 

\subsection{Statement of the main theorem}
\label{sect:statemainthm}
We fix $V, K$ and $S$ as in \S \ref{classsets}, as well as an arbitrary lattice $L \subset V$.
Up to enlarging $S$ we may and do assume furthermore (see \S \ref{sect:pnei})
\begin{equation} \label{ramKS} K_p={\rm O}(L_p)\,\,\,\, {\rm and}\,\,\,\, L_p \in \mathcal{U}(V_p)\,\,
\textrm{for all}\, \, p \notin S.
\end{equation}
Fix $p \notin S$. We identify ${\rm Gen}(K_p)$ with the set ${\rm Gen}(L_p)$ of all $\Z_p$-lattices in $V_p$ isometric to $L_p$, hence with the set $\mathcal{U}(V_p)$ of unimodular $\Z_p$-lattices of $V_p$ (Lemma \ref{orbunimloc}).
Setting ${\rm Gen}(K^p)={\rm Gen}(K_S) \times \prod'_{l \notin S \cup \{p\}} {\rm Gen}(K_l)$ we also have
\begin{equation} \label{decatp} {\rm Gen}(K) = {\rm Gen}(K_p) \times {\rm Gen}(K^p).\end{equation}
by \eqref{locdecGen}, and we shall write $x=(x_p,x^p)$ the corresponding decomposition of $x \in {\rm Gen}(K)$.
Two elements $x,y \in {\rm Gen}(K)$ are called {\it $p$-neighbors} if 
their $p$-th components $x_p$ and $y_p$, viewed as
elements of $\mathcal{U}(V_p)$, are $p$-neighbors.

\begin{definition}\label{defpneighbgen} For $x,y \in {\rm Gen}(K)$, we denote by ${\rm N}_p(x,y)$ the number of $p$-neighbors of $x$ which are in the same class as $y$ in ${\rm X}(K)$. 
\end{definition}

A few remarks are in order:\ps\ps

(R1) If $y$ is a $p$-neighbor of $x$, then $\sigma y$ is a $p$-neighbor of $\sigma x$ for all $\sigma \in {\rm O}(V)$. The quantity ${\rm N}_p(x,y)$ thus only depends on the classes of $x$ and of $y$ in ${\rm X}(K)$,
and we also write ${\rm N}_p(x,y)$ for $x,y \in {\rm X}(K)$. \ps\ps

(R2) Assume $\dim V \geq 3$. By Corollary~\ref{corsnpnei}, 
if $y$ is a $p$-neighbor of $x$ then the spinor genera ${\rm s}(y)$ and ${\rm s}(x)$ satisfy ${\rm s}(y) \, =\, {\rm \delta}_p\, {\rm s}(x)$, where $\delta_p \in {\rm S}(K)$ is defined by $\delta_p={\rm s}(g)$ for any element $g$ in ${\rm SO}_V(\AAA_f)$ with $g_l=1$ for $l\neq p$ and 
${\rm sn}(g_p) \in p \Z_p^\times$ (note $\Z_p^\times \subset {\rm sn}(K_p)$ for $p \notin S$ by Lemma \ref{integralspin}). We have $\delta_p \in {\rm S}_1(K)$, so ${\rm s}(x) \equiv {\rm s}(y) \bmod {\rm S}_1(K)$. \ps\ps

(R3) For $p \notin S$, the total number of $p$-neighbors of any $x$ in ${\rm Gen}(K)$ is $|{\rm C}_L(\Z/p)|$ by Lemma \ref{bijlinemap}, a quantity concretely given by Formula \eqref{cardCLp} and that we now denote by ${\rm c}_V(p)$.\ps\ps
 
\begin{thm}\label{thmmain} Assume $V$ is a quadratic space over $\Q$ with $\dim V >2$ and $V_\infty$ definite.
Let $K \subset {\rm O}_V(\AAA_f)$ be a compact open subgroup.
For all $x,y \in X(K)$ with ${\rm s}(x)\equiv {\rm s}(y) \bmod {\rm S}_1(K)$, we have 
$$\frac{{\rm N}_p(x,y)}{{\rm c}_V(p)} = \frac{1/|\Gamma_y|}{{\rm m}_K/|{\rm S}(K)|}\, \,+\,\, {\rm O}(\,\frac{1}{\sqrt{p}}\,)$$
when the prime $p$ goes to $\infty$ with the property ${\rm s}(y)\,=\,\delta_p\, {\rm s}(x)$.
We can even replace the $\frac{1}{\sqrt{p}}$ above by $\frac{1}{p}$ in the case $\dim V>4$.
\end{thm}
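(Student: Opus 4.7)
The plan is to interpret ${\rm N}_p(x,y)$ as (a constant multiple of) a matrix coefficient of the $p$-neighbour Hecke operator ${\rm T}_p$ acting on the finite-dimensional Hilbert space $L^2(X(K),{\rm m})$ equipped with the inner product $\langle f,g\rangle = \sum_{z \in X(K)} f(z)\overline{g(z)}/|\Gamma_z|$, and then to deduce the asymptotics from a spectral gap property for ${\rm T}_p$. Because ${\rm T}_p$ lies in the commutative Hecke ring ${\rm H}_{V_p}$ (and the Hecke involution acts trivially as recalled after the definition of ${\rm T}_p$), it is self-adjoint, so $L^2(X(K))$ splits as an orthogonal sum of eigenlines. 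I will split this spectrum into two pieces: the one-dimensional automorphic characters, namely the spinor characters $\sigma \in \Sigma(K)$, and their orthogonal complement $U$.

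For the first piece, Lemma~\ref{sataketrivialeta} together with Lemma~\ref{corsnpnei} shows that each $\sigma \in \Sigma(K)$ is a ${\rm T}_p$-eigenvector with eigenvalue $\sigma_p(g){\rm c}_V(p) = \sigma(\delta_p){\rm c}_V(p)$, where $g$ is any element realizing a $p$-neighbour at the place $p$. Expanding the characteristic functions of $x$ and $y$ in a basis of ${\rm T}_p$-eigenvectors, the ``spinor'' contribution to $\langle {\rm T}_p\delta_x,\delta_y\rangle$ equals
\[
\frac{{\rm c}_V(p)}{|\Gamma_y|\,|{\rm S}(K)|}\sum_{\sigma\in\Sigma(K)} \sigma({\rm s}(x))^{-1}\sigma({\rm s}(y))\sigma(\delta_p).
\]
Under the hypotheses ${\rm s}(y)=\delta_p\,{\rm s}(x)$ and ${\rm s}(x)\equiv {\rm s}(y)\bmod {\rm S}_1(K)$ (which is automatic from the previous identity, since $\delta_p\in{\rm S}_1(K)$), the character $\sigma\mapsto\sigma({\rm s}(x))^{-1}\sigma({\rm s}(y))\sigma(\delta_p)$ is the trivial character of $\Sigma(K)$, so the sum collapses to $|\Sigma(K)|=|{\rm S}(K)|$, and after using Lemma~\ref{massgenspin} to rewrite the normalization, one obtains exactly the main term $\frac{1/|\Gamma_y|}{{\rm m}_K/|{\rm S}(K)|}{\rm c}_V(p)$.

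It then remains to bound uniformly in $p$ the ${\rm T}_p$-eigenvalues occurring on $U$, which by Lemma~\ref{stakeTp} equal $p^{(\dim V)/2-1}\,{\rm tr\,St}({\rm c}(\chi_{\pi,p}))$ for $\pi$ ranging over the non--one-dimensional irreducible automorphic representations of ${\rm O}_V(\AAA)$ unramified at $p$ and containing a non-zero $K$-invariant vector. Here I invoke Arthur's endoscopic classification, transferred to the definite rational form ${\rm O}_V$ via the appendix by Ta\"ibi, which expresses $\chi_{\pi,p}$ in terms of Satake parameters of self-dual discrete automorphic representations of ${\rm GL}_m(\AAA)$ for various $m\le \dim V$. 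The Jacquet--Shalika estimates for each ${\rm GL}_m$ give $|{\rm tr\,St}({\rm c}(\chi_{\pi,p}))|=O(p^{(\dim V -1)/2-1/2})$, hence a ${\rm T}_p$-eigenvalue of size $O({\rm c}_V(p)/\sqrt{p})$, which yields the claimed $O(1/\sqrt{p})$. For $\dim V>4$, the only sharpening needed is to replace Jacquet--Shalika by the actual Ramanujan bound when one of the constituents of the Arthur parameter of $\pi$ has rank $1$ or $2$: the ${\rm GL}_1$ case is trivial, and the ${\rm GL}_2$ case is covered by Eichler--Shimura--Weil for holomorphic weight-$2$ forms, which together improve the bound to $O(1/p)$.

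The main obstacle is thus the appeal to Arthur's classification for the {\it definite} orthogonal group ${\rm O}_V$ of an anisotropic positive rational quadratic form, a result absent from \cite{arthur} and supplied here by Ta\"ibi's appendix along the lines of \cite{taibi_cpctmult}. Once this is granted, the non-trivial bound on ${\rm tr\,St}({\rm c}(\chi_{\pi,p}))$ is a bookkeeping exercise using the shape of the Arthur parameters and the Jacquet--Shalika / Eichler--Shimura estimates. The only other point that requires care is the identification of $\sigma_p(g) = \sigma(\delta_p)$ above, which rests on Corollary~\ref{corsnpnei} (all $p$-neighbour translates have spinor norm in $p\Z_p^\times$) and on the explicit exact sequence \eqref{exactS12}; this is what guarantees that the character orthogonality cancels every non-principal spinor contribution in the main term.
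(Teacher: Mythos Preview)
Your overall plan matches the paper's: write $\frac{{\rm T}_p}{{\rm c}_V(p)}$ as the sum of the projections onto the spinor characters (with eigenvalue $\sigma(\delta_p)$) plus a remainder on their orthogonal complement ${\rm M}(K)^0$, and bound that remainder. The computation of the main term is essentially the paper's (up to a harmless slip: the prefactor in your displayed sum should be $\frac{{\rm c}_V(p)}{|\Gamma_y|\,{\rm m}_K}$, not $\frac{{\rm c}_V(p)}{|\Gamma_y|\,|{\rm S}(K)|}$), and the right reference for the eigenvalue of $\sigma$ is Corollary~\ref{corsnpnei} together with the definition of $\delta_p$ in (R2), not Lemma~\ref{sataketrivialeta}.

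The genuine gap is in your bound on $U$. You assert that Jacquet--Shalika yields $|{\rm tr}\,{\rm St}({\rm c}(\chi_{\pi,p}))|=O(p^{(\dim V -2)/2})$, but this is not what Jacquet--Shalika gives. By Formula~\eqref{arthrel} the eigenvalues of ${\rm St}({\rm c}(f)_p)$ are products of an eigenvalue of some ${\rm c}(\pi_i)_p$ (of modulus $<p^{1/2}$ by Jacquet--Shalika) with a power $p^{(d_i-1)/2-k}$, $0\le k\le d_i-1$; setting $d=\max_i d_i$, what you actually obtain is $|{\rm tr}\,{\rm St}|=O(p^{d/2})$, hence $|\lambda(p)|=O(p^{(\dim V+d-2)/2})$ as in~\eqref{weakram}. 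This gives $O({\rm c}_V(p)/\sqrt p)$ only when $d\le\dim V-3$, and nothing in ``$\pi$ is not one-dimensional'' bounds $d$ a priori. The paper has to argue in the opposite direction: when $d$ is as large as possible, the constraints $\sum n_i d_i={\rm n}_V$, the sign ${\rm s}(\pi_i)$, and the prescribed infinitesimal character force ${\rm c}(f)_p=\pm\Delta$, so Lemma~\ref{sataketrivialeta} together with strong approximation (Lemma~\ref{strongapp}) shows that $f$ is a combination of spinor characters, contradicting $f\in U$. And when $d=\dim V-2$ (a case that only survives for $\dim V\in\{3,4\}$), the single factor $\pi_j$ on ${\rm GL}_2$ is symplectic with archimedean eigenvalues $\pm\tfrac12$, hence generated by a weight-$2$ holomorphic form, and one needs Eichler--Shimura--Weil \emph{already for the $O(1/\sqrt p)$ bound}, not only for the improvement to $O(1/p)$. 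Your sentence about $\dim V>4$ has the roles inverted: there the borderline case $d=\dim V-3$ forces $n_j=1$ (Hecke characters, Ramanujan trivial) or, for $\dim V=6$, a dihedral ${\rm GL}_2$ representation; it is in dimensions $3$ and $4$ that weight-$2$ Ramanujan is indispensable.

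In short, the architecture is right, but the heart of the argument---the case analysis on $d=\max_i d_i$ that rules out maximal $d$ via Lemma~\ref{sataketrivialeta} and strong approximation, and that handles the borderline values of $d$ via Ramanujan for weight-$2$ forms---is missing, and the one-line claim you wrote in its place is not correct as stated.
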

\ps\ps

\noindent By Lemma~\ref{massgenspin}, ${\rm m}_K/|{\rm S}(K)|$ is also the mass of the spinor genus of $y$ in ${\rm Gen}(K)$.

\begin{remark}
\label{mainthmlatticecase}
{\rm
Theorem~\ref{thmstatgen} is the special case of Theorem \ref{thmmain} in which $K$ has the form of Example~\ref{gengenus}
and $|{\rm S}(K)|=1$ (any $L' \in {\rm Gen}(L)$ is isometric to a lattice in ${\rm Gen}_\Q(L)$ by the Hasse-Minkowski theorem).
Note that when $K$ is as in Example~\ref{gengenus}, we always have $\det K = \{ \pm 1\}^P$, i.e. ${\rm S}_2(K)=1$, so in this case 
we have ${\rm s}(x) \equiv {\rm s}(y) \bmod {\rm S}_1(K)$ for all $x,y \in {\rm X}(K)$.
}
\end{remark}
\ps\ps

\begin{remark}
\label{spinordirichlet}
{\rm 
By definitions, for $p \notin S$ then $\delta_p$ is the image of the element $p \in (\Z/{\rm N}_S)^\times$ under the morphism $\delta_S$ 
in Formula \eqref{deltaT} and the natural isomorphism $\Delta_1(K) \isomo {\rm S}_1(K)$. 
In particular, $\delta_p \in {\rm S}_1(K)$ only depends on the class of $p$ in $(\Z/{\rm N}_S)^\times$.
For any $a \in {\rm S}_1(K)$, the primes $p \notin S$ with $\delta_p=a$ form thus a union of arithmetic 
progressions modulo ${\rm N}_S$, and there are infinitely many such primes by Dirichlet
(their Dirichlet density is $\frac{1}{|{\rm S}_1(K)|}$).
}
\end{remark}

\section{Proof of the main theorem}\label{sect:pf}

Fix $V$ and $K$ as in the statement of Theorem \ref{thmmain}. Define ${\rm M}(K)$ as the $\R$-vector space of maps ${\rm X}(K) \rightarrow \R$. This is an Euclidean space for 
\begin{equation}\label{petersson} \langle f, f' \rangle = \int_{{\rm O}(V)\backslash {\rm O}_V(\AAA_f)} \,f\, f'\, {\rm m}\,= \sum_{x \in {\rm X}(K)} \frac{1}{|\Gamma_x|} f(x) f'(x), \end{equation}
where ${\rm m}$ is the measure on ${\rm O}(V) \backslash {\rm O}(\AAA_f)$ recalled in \S \ref{classsets} ({\it Petersson inner product}). It follows that the elements $\sigma \in \Sigma(K)$, which are elements of ${\rm M}(K)$ by definitions, are pairwise orthogonal and with norm $\langle \sigma, \sigma \rangle = {\rm m}_K$.  We may thus write 
${\rm M}(K)  \,=\, (\oplus_{\sigma \in \Sigma(K)}\, \R \sigma\,)  \,\perp\, {\rm M}(K)^0$,  which defines ${\rm M}(K)^0$. 

\begin{lemma} 
\label{projsk}
For $\sigma \in \Sigma(K)$, the orthogonal projection ${\rm p}_\sigma : {\rm M}(K) \rightarrow \R \sigma$ is given by
${\rm p}_\sigma(f) =  \frac{1}{{\rm m}_K} (\sum_{x \in {\rm X}(K)}\frac{ f(x) \sigma(x)}{|\Gamma_x|})\, \, \sigma$.
\end{lemma}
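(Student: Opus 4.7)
The plan is to apply the standard formula for orthogonal projection onto a one-dimensional subspace of an Euclidean space, namely
\[
{\rm p}_\sigma(f) \,=\, \frac{\langle f, \sigma \rangle}{\langle \sigma, \sigma \rangle}\, \sigma,
\]
and to evaluate the two inner products separately using \eqref{petersson}. First I would verify that $\sigma$ really defines an element of ${\rm M}(K)$: since $\sigma$ is trivial on both $K$ and ${\rm O}(V)$ by Definition~\ref{defspincar}, it descends to a function ${\rm X}(K) \to \C$, and this function takes values in $\{\pm 1\}$ because $\sigma$ is a character of the $2$-torsion group ${\rm S}(K)$ (see \S\ref{prelimspin}); in particular $\sigma$ is real-valued.

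For the denominator, the identity $\sigma(x)^2 = 1$ combined with \eqref{petersson} and \eqref{defmk} immediately yields $\langle \sigma, \sigma \rangle = \sum_{x \in {\rm X}(K)} 1/|\Gamma_x| = {\rm m}_K$, which is the norm assertion already recorded just above the lemma. For the numerator, I would simply unfold \eqref{petersson} to get $\langle f, \sigma\rangle = \sum_{x \in {\rm X}(K)} f(x)\sigma(x)/|\Gamma_x|$. Plugging these two computations into the projection formula above gives the claimed expression. There is no real obstacle: the statement is a translation of the one-dimensional projection formula into the notation of \eqref{petersson}, and all of its ingredients have been set up in the preceding subsections.
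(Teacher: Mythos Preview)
Your proof is correct and follows exactly the same approach as the paper: apply the standard projection formula ${\rm p}_\sigma(f) = \frac{\langle f, \sigma \rangle}{\langle \sigma, \sigma \rangle}\,\sigma$ and use $\langle \sigma, \sigma\rangle = {\rm m}_K$ (which the paper records just before the lemma). The only difference is that you spell out the computations of the two inner products explicitly, whereas the paper compresses everything into a single line.
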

\begin{pf} For $f \in {\rm M}(K)$ we have ${\rm p}_\sigma(f) = \lambda \sigma$ with 
$\lambda = \frac{\langle f, \sigma \rangle } {\langle \sigma , \sigma \rangle}$.
\end{pf}

Choose a lattice $L \subset V$ and a finite set $S$ of primes 
as in \S \ref{classsets} and satisfying \eqref{ramKS}. 
Fix $p \notin S$. Any $f \in {\rm M}(K)$ can be viewed  by \eqref{decatp} as a function 
${\rm Gen}(K^p) \times \Z[\mathcal{U}(V_p)] \rightarrow \R$ which is $\Z$-linear in the second coordinate.
We thus get a natural action on Hecke ring ${\rm H}_{V_p}$ on ${\rm M}(K)$.
For instance, we have 
\begin{equation}\label{defTpxy} {\rm T}_p(f)(x) = \sum_{y \in {\rm X}(K)} {\rm N}_p(x,y) f(y)\end{equation}
for all $x \in {\rm X}(K)$. 
The property $T^{\rm t}=T$ for all $T$ in ${\rm H}_{V_p}$ implies that each such operator of 
${\rm M}(K)$ is self-adjoint (see {\it e.g.} Lemma 4.2.3 in \cite{chlannes}).
Still by the decomposition \eqref{locdecGen}, and by the commutativity of each ${\rm H}_{V_p}$,
all those endomorphisms of ${\rm M}(K)$ commute.
It follows that they are are simultaneously diagonalizable.
Remarks (R2) and (R3) of \S \ref{sect:statemainthm}, together with Formula \ref{defTpxy}, imply:

\begin{lemma} \label{evtps} Each $\sigma \in \Sigma(K)$ is an eigenvector of ${\rm T}_p$, with eigenvalue ${\rm c}_V(p)\sigma(\delta_p)$.
\end{lemma}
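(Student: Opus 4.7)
The plan is to compute $T_p(\sigma)(x)$ directly from Formula~\eqref{defTpxy} and factor out $\sigma(x)$ using the fact that spinor characters are constant on spinor genera.

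First, I would observe that by Corollary~\ref{corstrappK}, any $\sigma \in \Sigma(K)$ is trivial on $\mu({\rm Spin}_V(\AAA_f))$, and by definition it is trivial on $K$ and on ${\rm O}(V)$. Hence $\sigma$ factors through the quotient ${\rm S}(K) = {\rm O}(V) \backslash {\rm O}_V(\AAA_f)/\mu({\rm Spin}_V(\AAA_f))K$, and we may view $\sigma : {\rm S}(K) \to \C^\times$ as a character of the abelian group ${\rm S}(K)$.

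Next, for $x \in {\rm X}(K)$ and any $y \in {\rm X}(K)$ with $N_p(x,y) \neq 0$, Remark (R2) of \S\ref{sect:statemainthm} (applying Corollary~\ref{corsnpnei}) gives the relation ${\rm s}(y) = \delta_p \cdot {\rm s}(x)$ in ${\rm S}(K)$. Since $\sigma$ is a group character, this translates into $\sigma(y) = \sigma(\delta_p)\, \sigma(x)$.

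Substituting into Formula~\eqref{defTpxy}, we obtain
\begin{equation*}
T_p(\sigma)(x) \,=\, \sum_{y \in {\rm X}(K)} N_p(x,y)\, \sigma(y) \,=\, \sigma(\delta_p)\,\sigma(x) \, \sum_{y \in {\rm X}(K)} N_p(x,y).
\end{equation*}
By Remark (R3), the total number of $p$-neighbors of $x$ is ${\rm c}_V(p)$, so the inner sum equals ${\rm c}_V(p)$, and thus $T_p(\sigma)(x) = {\rm c}_V(p)\,\sigma(\delta_p)\,\sigma(x)$ for every $x \in {\rm X}(K)$, as desired. There is no real obstacle here: all of the substantive work (surjectivity of spinor norm on $p$-neighbors, strong approximation for spin groups forcing $\sigma$ to factor through ${\rm S}(K)$, and the cardinality of $\mathcal{N}_p(L)$) has already been done in the preceding sections, and the statement reduces to an immediate bookkeeping computation.
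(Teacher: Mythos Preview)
Your proof is correct and follows exactly the approach the paper indicates: the paper's own proof is the single sentence that Remarks (R2) and (R3), together with Formula~\eqref{defTpxy}, imply the lemma. You have simply spelled out these implications in detail.
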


\noindent In particular, ${\rm T}_p$ preserves ${\rm M}(K)^0$ for all $p \notin S$. 

\begin{thm}\label{eqmainthm} When the prime $p$ goes to infinity,  we have in ${\rm End}({\rm M}(K)^0)$
\begin{equation}\label{mainstat} \frac{ {\rm T}_p }{{\rm c}_V(p)} = {\rm O}(\frac{1}{\sqrt{p}}),\end{equation} 
and we can replace the $\frac{1}{\sqrt{p}}$ above by $\frac{1}{p}$ in the case $\dim V>4$.
\end{thm}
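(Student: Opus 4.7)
The plan is to diagonalise the commuting family $\{{\rm T}_p\}_{p\notin S}$ on ${\rm M}(K)^0\otimes\C$, interpret the Hecke eigenvalues via Lemma~\ref{stakeTp}, and bound them using Arthur's endoscopic classification (extended to the definite orthogonal group over $\Q$ in the appendix) combined with Jacquet--Shalika's estimates for $\mathrm{GL}_m$.

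First I would use the self-adjointness of Hecke operators ($T^{\rm t}=T$ on each ${\rm H}_{V_p}$), together with the commutativity of those Hecke rings and Lemma~\ref{evtps}, to produce an orthonormal basis of ${\rm M}(K)^0\otimes\C$ consisting of joint eigenvectors for the full family $\bigcup_{p\notin S}{\rm H}_{V_p}$. Each such eigenline spans the $K$-fixed part of an irreducible automorphic representation $\pi$ of ${\rm O}_V(\AAA)$, with $\pi_\infty$ an irreducible representation of the compact real group ${\rm O}(V_\infty)$ and $\pi_p$ unramified for all $p\notin S$. Lemma~\ref{stakeTp} combined with ${\rm c}_V(p)\sim p^{\dim V-2}$ (cf.~\eqref{asymcl}) then reduces the theorem to the uniform bound
\[
|{\rm Trace}\,{\rm St}({\rm c}_p(\pi))| = O(p^{(\dim V-3)/2}) \quad (\text{resp. } O(p^{(\dim V-4)/2})\text{ when }\dim V>4),
\]
as $p\to\infty$, for each of the finitely many $\pi$ contributing to ${\rm M}(K)^0$.

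Next I would apply the appendix: such $\pi$ carries a global discrete Arthur parameter $\psi_\pi=\boxplus_{i=1}^N \pi_i[d_i]$ with $\sum_i m_id_i={\rm n}_V$ and each $\pi_i$ a self-dual cuspidal automorphic representation of $\mathrm{GL}_{m_i}(\AAA)$; the eigenvalues of ${\rm St}({\rm c}_p(\pi))$ then form the multiset $\{\beta_{i,j}\,p^{(d_i-1-2k)/2}\}$, where $\{\beta_{i,j}\}_j$ are the Satake eigenvalues of $\pi_{i,p}$ and $0\leq k\leq d_i-1$. By Lemma~\ref{sataketrivialeta}, the spinor characters in $\Sigma(K)$ correspond exactly to the extremal Arthur shape $\eta_1[{\rm n}_V-1]\boxplus\eta_2[1]$ (even case) or $\eta[{\rm n}_V]$ (odd case), with $\eta_i$ quadratic Hecke characters; since $\pi\in\Sigma(K)^\perp$, the parameter $\psi_\pi$ avoids this shape, and the parity and self-duality constraints built into Arthur's parametrisation, together with the dimension equation, force $d_i\leq{\rm n}_V-2$ for every $i$.

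Finally, combining this with the Jacquet--Shalika bound $|\beta_{i,j}|<p^{1/2}$ for cuspidal $\pi_i$ with $m_i\geq 2$, and the exact bound $|\beta_{i,j}|=1$ for Hecke characters ($m_i=1$), each eigenvalue of ${\rm St}({\rm c}_p(\pi))$ is bounded by $p^{1/2}\cdot p^{(d_i-1)/2}\leq p^{({\rm n}_V-2)/2}$, yielding the $O(p^{-1/2})$ claim. For $\dim V>4$, a short case analysis improves this by a factor $p^{1/2}$: when $d_i$ attains the extremum ${\rm n}_V-2$ the corresponding $\pi_i$ is forced to be a Hecke character (so exact Ramanujan saves the spurious $p^{1/2}$), while for cuspidal $\pi_i$ with $m_i\geq 2$ the constraint $m_id_i\leq{\rm n}_V$ keeps $d_i$ small enough that the Jacquet--Shalika contribution already falls below the required threshold. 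The principal obstacle is the extension of Arthur's endoscopic classification to the definite orthogonal group over $\Q$, which is precisely the content of the appendix; a secondary subtlety for the small-dimensional cases $\dim V=3,4$ (treated separately in the paper via Jacquet--Langlands) is that Jacquet--Shalika is insufficient there and one must invoke Eichler--Shimura--Weil for weight-$2$ holomorphic modular forms.
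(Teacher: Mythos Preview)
Your strategy coincides with the paper's first proof: diagonalise, invoke Arthur's classification (Theorem~\ref{arthurthm}), bound Satake parameters via Jacquet--Shalika, and treat borderline Arthur shapes separately. The outline is right, but the key numerical claim has a gap that breaks the argument when $\dim V$ is even.

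You assert that excluding the extremal (spinor-character) shapes forces $d_i\leq {\rm n}_V-2$, and that this together with Jacquet--Shalika bounds each eigenvalue of ${\rm St}({\rm c}_p(\pi))$ by $p^{({\rm n}_V-2)/2}$. Feeding this into Lemma~\ref{stakeTp} and ${\rm c}_V(p)\sim p^{\dim V-2}$ gives $\frac{|\lambda(p)|}{{\rm c}_V(p)}=O(p^{({\rm n}_V-\dim V)/2})$. This is $O(p^{-1/2})$ only when $\dim V$ is odd, where ${\rm n}_V=\dim V-1$; for $\dim V$ even one has ${\rm n}_V=\dim V$ and your bound yields only $O(1)$. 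What is actually needed is $d\leq \dim V-3$ uniformly, i.e.\ $d\leq {\rm n}_V-3$ in the even case. The paper obtains this extra step from the sign constraint ${\rm s}(\pi_j)=(-1)^{\dim V+d_j-1}$: if $\dim V=2r$ and $d_j=2r-2$ then $\pi_j$ must be symplectic, so $n_j$ is even, while $n_j d_j\leq 2r$ forces $n_j\leq 1+\frac{1}{r-1}<2$ for $r\geq 3$---a contradiction. You invoke ``parity and self-duality constraints'' but stop at the weaker conclusion, which is not enough.

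Your sketch of the $O(p^{-1})$ improvement is also too coarse. The dichotomy ``either the extremal $\pi_i$ is a Hecke character, or $m_i\geq 2$ keeps $d_i$ small enough for Jacquet--Shalika alone'' misses the case $\dim V=6$, $d=3$, $m_j=2$: here $\pi_j$ is an \emph{orthogonal} cuspidal representation of ${\rm GL}_2$ (not a character), and Jacquet--Shalika gives only $O(p^{-1/2})$. The paper closes this by observing that such a $\pi_j$ has nontrivial quadratic central character, hence is an automorphic induction of a unitary Hecke character and is therefore tempered.
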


\begin{pf} (Theorem \ref{eqmainthm} implies Theorem \ref{thmmain}) By Lemma~\ref{evtps}
we have $\frac{{\rm T}_p}{{\rm c}_V(p)} \,= \, (\sum_{\sigma \in \Sigma(K)} \sigma(\delta_p) {\rm p}_\sigma) \perp u_p$
where $u_p$ denotes the restriction of $\frac{{\rm T}_p}{{\rm c}_V(p)}$ to ${\rm M}(K)^0$. Fix $x,y \in {\rm X}(K)$ and set $f=1_y$, the characteristic function of $y$. We have ${\rm T}_p(1_y)(x)= {\rm N}_p(x,y)$ by \eqref{defTpxy}. 
On the other hand, by Lemma \ref{projsk} we have 
$$\sum_{\sigma \in \Sigma(K)} \sigma(\delta_p) {\rm p}_\sigma(1_y)(x) = 
\sum_{\sigma \in \Sigma(K)}  \sigma(\delta_p) \frac{1}{{\rm m}_K} \frac{\sigma(y)}{|\Gamma_y|} \sigma(x),$$
which reduces to $\frac{1/|\Gamma_y|}{{\rm m}_K/|\Sigma(K)}$ assuming ${\rm s}(y)={\rm s}(x)\delta_p$, and we are done.
\end{pf}

It remains to prove Theorem \ref{eqmainthm}.
This step is not elementary, and we will give two different proofs it.
The first one will rely in a crucial way on
Arthur's classification of automorphic representations
of classical groups \cite{arthur}. 
Our first aim now is to state the statement we will need from this theory, 
in the form which is the most suitable for our purpose.
We need a few ingredients, denoted by (I1) to (I4) below.
\ps

(I1) {\it Cuspidal automorphic representations of ${\rm GL}_n$ over $\Q$}. We refer to \cite{boreljacquet} for this notion. For any such automorphic representation $\pi$, we have local components $\pi_v$ for $v=\infty$ or $v$ a prime $p$. 

-- The component $\pi_\infty$ is an irreducible Harish-Chandra module for ${\rm GL}_n(\R)$. As such, it has an infinitesimal character, that we may view following Langlands as semisimple conjugacy class ${\rm c}(\pi_\infty)$ in ${\rm M}_n(\C)$.\ps

-- For all but finitely many primes $p$, the representation $\pi_p$ is unramified, {\it i.e.} has nonzero invariants under ${\rm GL}_n(\Z_p)$. For any such $p$, the Satake isomorphism shows that $\pi_p$ is uniquely determined by its Satake parameter, that we may view following Langlands as semisimple conjugacy class ${\rm c}(\pi_p)$ in ${\rm GL}_n(\C)$.\ps

For any finite set of primes $S$ we denote by $\mathcal{X}^S({\rm GL}_n)$ the set of all collections $(x_v)_{v \notin S}$, such that $x_\infty$ is a semisimple conjugacy class in ${\rm M}_n(\C)$, and $x_p$ is a semisimple conjugacy class in ${\rm GL}_n(\C)$ for each prime $p \notin S$. By the discussion above, for any cuspidal automorphic representation $\pi$ of ${\rm GL}_n$, and for any $S$ such that $\pi_p$ is unramified for $p \notin S$, there is a well-defined element 
$${\rm c}(\pi) \in \mathcal{X}^S({\rm GL}_n),\,\,{\rm with}\,\, \, \, {\rm c}(\pi)_v:={\rm c}(\pi_v)\, \, \forall v \notin S.$$

(I2) {\it Operations on $\mathcal{X}^S({\rm GL}_n)$}. The direct sum and tensor product of conjugacy classes induce natural operations $\mathcal{X}^S({\rm GL}_n) \times \mathcal{X}^S({\rm GL}_m) \rightarrow \mathcal{X}^S({\rm GL}_{n+m})$ and $\mathcal{X}^S({\rm GL}_n) \times \mathcal{X}^S({\rm GL}_{m}) \rightarrow \mathcal{X}^S({\rm GL}_{nm})$, denoted respectively by $(x,y) \mapsto x \oplus y$ and $x \otimes y$. They are commutative, associative and distributive in an obvious way.
Also, an important role will be played by the {\it Arthur element} 
$${\rm A} \in \mathcal{X}^S({\rm GL}_2)$$
such that the eigenvalues of ${\rm A}_\infty$ are $\pm \frac{1}{2}$, and that of ${\rm A}_p$ are the positive real numbers $p^{\pm 1/2}$. For each integer $d\geq 0$, we shall also need a symmetric power morphism ${\rm GL}_2 \rightarrow {\rm GL}_{d+1}$; it induces a map ${\rm Sym}^d : \mathcal{X}^S({\rm GL}_2) \rightarrow 
 \mathcal{X}^S({\rm GL}_{d+1})$. 
\ps
(I3) {\it Automorphic forms for ${\rm O}_V$ and their parameters}.
By definition, an automorphic form for ${\rm O}_V$ is a map $f: {\rm O}(V) \backslash {\rm O}_V(\AAA) \rightarrow \C$ which is right-invariant under some compact open subgroup of ${\rm O}_V(\AAA_f)$
and which generates a finite dimensional vector space under all right translations by the compact group ${\rm O}(V_\infty)$. They form a $\C[{\rm O}_V(\AAA)]$-module denoted $\mathcal{A}({\rm O}_V)$. 
This module is well-known to be semisimple and admissible.
In what follows, we are only interested in the subspace of elements of $\mathcal{A}({\rm O}_V)$ which are right ${\rm O}(V_\infty)$-invariants;  it is nothing else than $\bigcup_{K \subset {\rm O}_V(\AAA_f)} {\rm M}(K)$, where $K$ runs among the compact open subgroups of ${\rm O}_V(\AAA_f)$. We shall also need below a variant of these definitions with ${\rm O}$ replaced everywhere by ${\rm SO}$.\ps

As in \S \ref{sect:pnei}, consider the complex algebraic group $\widehat{{\rm O}_V}$ defined by 
$\widehat{{\rm O}_V}={\rm Sp}_{2r}(\C)$ 
if $\dim V = 2r+1$ is odd, and by $\widehat{{\rm O}_V}={\rm O}_{2r}(\C)$ if $\dim V=2r$ is even.
We define ${\rm n}_V:=2r$ and have a natural complex representation 
${\rm St} : \widehat{{\rm O}_V} \rightarrow {\rm GL}_{{\rm n}_V}(\C)$.
For any finite set $S$ of primes, define $\mathcal{X}^S({\rm O}_V)$ as the set of all collections $(x_v)_{v \notin S}$, with $x_\infty$ a semi-simple conjugacy class in the complex Lie algebra of $\widehat{{\rm O}_V}$, and $x_p$ a semi-simple conjugacy class in $\widehat{{\rm O}_V}$. Assuming $f \in {\rm M}(K) \otimes \C$ is a common eigenvector for ${\rm H}_{V_p}$ for all primes $p \not \in S$, we attach to $f$ a unique element ${\rm c}(f) \in \mathcal{X}^S({\rm O}_V)$ defined as follows:\ps\ps

-- ${\rm c}(f)_\infty$ is the semisimple conjugacy class with eigenvalues in ${\rm St}$ the $2r$ elements $\pm \frac{\dim V-2i}{2}$ with $i=1,\dots,r$ (infinitesimal character of the trivial representation of ${\rm O}(V_\infty)$, generated by $f$),\ps

-- for $p \notin S$, the $\C[{\rm O}(V_p)]$-module generated by $f$ is isomorphic to ${\rm U}(\chi_p)$ with $\chi_p : {\rm H}_{V_p} \rightarrow \C$ defined by the eigenvector $f$. We define ${\rm c}(f)_p$ as the conjugacy class ${\rm c}(\chi_p)$ attached to $\chi_p$
under the Satake isomorphism (see \S \ref{sect:pnei}).\ps\ps

(I4) {\it Orthogonal-symplectic alternative.} If $\pi$ is a cuspidal automorphic representation of ${\rm GL}_n$ over $\Q$, then so is its contragredient $\pi^\vee$. If $\pi$ is self-dual, that is isomorphic to $\pi^\vee$, then it is either {\it orthogonal} or {\it symplectic}, in the sense of Arthur \cite{arthur}. Define accordingly the sign ${\rm s}(\pi)$ of $\pi$ to be $1$ or $-1$. \ps

\par \noindent 

We are now able to state the following main result of Arthur's theory. 
It is a standard consequence of Theorem~\ref{thm:existence_psi} proved by Ta\"ibi in the appendix.

\begin{thm}\label{arthurthm} Let $f \in {\rm M}(K)$ be a common eigenvector for all the elements of ${\rm H}_{V_p}$ for all $p \notin S$. Then there is a unique collection $(n_i,\pi_i,d_i)_{i \in I}$, 
with integers $n_i,d_i \geq 1$ satisfying ${\rm n}_V=\sum_{i \in I} n_i d_i$, and $\pi_i$ a cuspidal automorphic representation of ${\rm GL}_{n_i}$ unramified outside $S$, such that 
\begin{equation}\label{arthrel} {\rm St}({\rm c}(f)) \,=\, \bigoplus_{i \in I} \,{\rm c}(\pi_i)\, \otimes {\rm Sym}^{d_i-1} {\rm A}.\end{equation}
Moreover,  $\pi_i$ is self-dual with sign ${\rm s}(\pi_i) =(-1)^{\dim V+d_i-1}$ for each $i \in I$.
\end{thm}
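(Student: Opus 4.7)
The plan is to extract Theorem \ref{arthurthm} from the appendix result (Theorem \ref{thm:existence_psi}) combined with the standard dictionary between Arthur parameters and Satake parameters at unramified places. The four main steps would be: (i) promote $f$ to an automorphic representation; (ii) invoke the appendix to obtain its global Arthur parameter; (iii) read off the formula \eqref{arthrel} at unramified places; (iv) deduce the sign condition and uniqueness.

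First I would embed $f$ into an irreducible $\mathbb{C}[{\rm O}_V(\AAA)]$-submodule $\pi = \pi_\infty \otimes \pi_f$ of $\mathcal{A}({\rm O}_V)$. This is possible because $\mathcal{A}({\rm O}_V)$ is semisimple and admissible and $f$ is a Hecke eigenvector at every $p \notin S$: the spherical component of $\pi_p$ for $p \notin S$ is then forced to be the unramified representation with Satake parameter ${\rm c}(f)_p$, and $\pi_\infty$ must be the trivial representation of the compact group ${\rm O}(V_\infty)$ (the only $f$-isotypic irreducible on the archimedean side, since $f$ is ${\rm O}(V_\infty)$-invariant). This $\pi$ is globally unramified outside $S$.

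Next, I would apply Theorem \ref{thm:existence_psi} of the appendix to $\pi$: it furnishes a discrete global Arthur parameter
$$\psi \,=\, \boxplus_{i \in I}\, \pi_i[d_i]$$
with $\pi_i$ a cuspidal self-dual automorphic representation of ${\rm GL}_{n_i}$ (unramified outside $S$ because $\pi$ is) and $\sum n_i d_i = {\rm n}_V$. By the unramified local Langlands correspondence, the local Satake parameter of $\pi_i[d_i]$ at any $p \notin S$ is the tensor product of ${\rm c}(\pi_{i,p})$ with the Satake parameter of the $d_i$-dimensional representation of $\mathrm{SL}_2$ evaluated at $\operatorname{diag}(p^{1/2},p^{-1/2})$, which is exactly ${\rm c}(\pi_{i,p}) \otimes \mathrm{Sym}^{d_i-1}(\mathrm{A}_p)$. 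Summing over $i$ gives the unramified part of \eqref{arthrel}. The archimedean component is identical: it amounts to the fact that the infinitesimal character of the trivial representation of ${\rm O}(V_\infty)$ has the eigenvalues $\pm(\dim V - 2i)/2$ (i.e.\ it equals $\rho$), and $\mathrm{Sym}^{d-1}(\mathrm{A}_\infty)$ has eigenvalues $\tfrac{d-1}{2},\tfrac{d-3}{2},\dots,-\tfrac{d-1}{2}$.

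The sign constraint is the orthogonal/symplectic dichotomy in Arthur's formalism: since $\mathrm{St}\circ\psi$ takes values in $\widehat{\mathrm{O}_V}$, the whole parameter is symplectic when $\dim V$ is odd and orthogonal when $\dim V$ is even. Each summand $\pi_i \boxtimes \nu_{d_i}$ is self-dual of that same type; since $\nu_{d_i} = \mathrm{Sym}^{d_i-1}$ has Arthur sign $(-1)^{d_i-1}$ (symplectic for $d_i$ even, orthogonal for $d_i$ odd), the product rule for signs of tensor products forces ${\rm s}(\pi_i) = (-1)^{\dim V + d_i - 1}$. Uniqueness of the triples $(n_i,\pi_i,d_i)$ follows from strong multiplicity one for $\mathrm{GL}_{n_i}$ applied to the $\pi_i$, together with the fact that the collection of pairs $(\pi_i,d_i)$ can be recovered from the archimedean infinitesimal character of $\psi$ (which separates the factors by the sizes of their $\mathrm{Sym}^{d_i-1}$ contributions) and the Satake parameters outside $S$. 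The only real obstacle is step (ii) — Arthur's book is written for quasi-split groups, so extending his classification to the anisotropic inner form ${\rm O}_V$ relevant here is exactly what the appendix accomplishes; once Theorem \ref{thm:existence_psi} is in hand, the rest of the argument is a routine translation.
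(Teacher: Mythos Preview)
Your overall strategy matches the paper's, but there is one genuine gap. Theorem~\ref{thm:existence_psi} in the appendix is stated and proved for $G=\mathrm{SO}_V$, not for $\mathrm{O}_V$. You cannot apply it directly to your $\pi \subset \mathcal{A}(\mathrm{O}_V)$. The paper inserts an extra step here: it restricts functions $\mathcal{A}(\mathrm{O}_V) \to \mathcal{A}(\mathrm{SO}_V)$, argues that ${\rm res}\,\pi \neq 0$ (because any nonzero automorphic form has an $\mathrm{O}_V(\AAA)$-translate nonvanishing at the identity), and then chooses an irreducible constituent $\pi' \subset {\rm res}\,\pi$. One must also check that for $p \notin S$ the Satake parameter of $\pi'_p$ (for $\mathrm{SO}(V_p)$) lies in the $\{1,\widehat{\theta}\}$-orbit determined by ${\rm c}(f)_p$; this uses Remark~\ref{heckeSOV} on the relation between ${\rm H}_{V_p}$ and ${\rm H}'_{V_p}$, since the constituents of $(\pi_p)_{|\mathrm{SO}(V_p)}$ are outer-conjugate. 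Without this reduction, your step (ii) does not literally apply.

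Apart from this, your steps (iii) and (iv) are the ``standard consequence'' that the paper alludes to but does not write out; your sign computation via the multiplicativity of the orthogonal/symplectic alternative is correct, and uniqueness indeed comes down to the generalized strong multiplicity one theorem of Jacquet--Shalika for the isobaric constituents.
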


\begin{pf} 
Let $\pi \subset \mathcal{A}({\rm O}_V)$ be an irreducible constituent of the ${\rm O}_V(\AAA)$-module generated by $f$.
By definitions, we have $\pi_\infty \simeq 1$ and $\pi_p$ is unramified with Satake parameter ${\rm c}(f)_p$. 
The restriction of functions ${\rm res}: \mathcal{A}({\rm O}_V) \rightarrow \mathcal{A}({\rm SO}_V)$ 
is ${\rm SO}_V(\AAA)$-equivariant, and we have ${\rm res} \pi\, \neq 0$ 
since any nonzero $\varphi$ in $\mathcal{A}({\rm O}_V)$ has an ${\rm O}_V(\AAA)$-translate 
with $\varphi(1) \neq 0$. Choose an irreducible constituent $\pi' \subset {\rm res}\, \pi$.
For each place $v$, the local component $\pi'_v$ is a constituent of the restriction of $\pi_v$ to ${\rm SO}(V_v)$. 
So $\pi'_\infty \simeq 1$, and for a prime $p \notin S$ the $1$ or $2$ constituents of $(\pi_p)_{|{\rm SO}(V_p)}$ 
are ${\rm O}(L_p)$-outer conjugate and have a Satake parameter belonging to that of $\pi_p$ (see Remark~\ref{heckeSOV}).
We conclude by applying Theorem~\ref{thm:existence_psi} to $\pi'$.
\end{pf}

\begin{pf} (First proof of Theorem \ref{eqmainthm})
	Let $f \in {\rm M}(K) \otimes \C$ be a common eigenvector for all ${\rm H}_{V_p}$ with $p \notin S$. Denote by $\lambda(p)$ the eigenvalue of ${\rm T}_p$ on $f$. As we have ${\rm c}_V(p) \sim p^{\dim V-2}$ for $p \rightarrow \infty$, it is enough to show that either $f$ is a linear combination of $\sigma$ with $\sigma \in \Sigma(K)$, or we have 
\begin{equation} \label{asstoshow} |\lambda(p)|={\rm O} ( p^{ \dim V -3 + \delta } ) \, \, \, \, {\rm for}\, \, p \longrightarrow \infty \end{equation}
with $\delta=0$ for $\dim V \geq 5$, and $\delta = 1/2$ for $3 \leq \dim V \leq 4$. \ps
	Let $(n_i,\pi_i,d_i)$, $i \in I$, be the collection associated to $f$ by Theorem \ref{arthurthm}. 
Denote by $\lambda_i(p)$ the trace of ${\rm c}(\pi_i)_p$. 
By \eqref{arthrel} and Lemma \ref{stakeTp} we have thus
\begin{equation} 
\label{formulvparthur} 
p^{\frac{-\dim V}{2}+1}\lambda(p) \,=\, \, \sum_{i \in I}\, \lambda_i(p) \, \frac{p^{d_i/2}-p^{-d_i/2}}{p^{1/2}-p^{-1/2}}.
\end{equation}
%The generalized Ramanujan conjecture reads $|\lambda_i(p)| \leq n_i$ for all $i \in I$ and all $p \notin S$.
By the Jacquet-Shalika estimates \cite[Cor. p. 515]{jasha1}, we have $|\lambda_i(p)| < \,n_i\, p^{1/2}$ for all $i \in I$ and all $p \notin S$.
If we set $d = {\rm Max} \{ d_i \, \, | \, \, i \in I\}$, we deduce 
\begin{equation} \label{weakram} |\lambda(p)|  = {\rm O} ( p^{ \frac{ \dim V - 2 + d}{2} } )\, \, \, \, \, {\rm for}\, \, p \rightarrow \infty.
\end{equation}
In particular, the bound \eqref{asstoshow} follows from \eqref{weakram} in the case $d \leq  \dim V -4$,
so we definitely assume $d \geq \dim V-3$. Set $J=\{ j \in I\, \, |\, \, d_j=d\}$.
If the {\it generalized Ramanujan conjecture} holds for each $j \in J$, 
we have $|\lambda_j(p)| \leq n_j$ for $p \notin S$ and $j \in J$, hence the
better bound \begin{equation} \label{strongram} |\lambda(p)|  = {\rm O} ( p^{ \frac{ \dim V + d - 3}{2} } )\, \, \, \, \, {\rm for}\, \, p \rightarrow \infty.
\end{equation}
In this case, \eqref{asstoshow} follows for $d=\dim V-3$ as well. \ps

We consider first the case $\dim V = 2r+1$ is odd, hence $2r-2 \leq d \leq 2r$. For $j \in J$ we have ${\rm s}(\pi_j)=(-1)^{d}$ and  $n_j \leq \frac{2r}{d}$.\ps

-- Case $d=\dim V-3=2r-2$ (so $r\geq 2$ and $\dim V \geq 5$). 
For $j \in J$ we have $n_j \leq 1+\frac{1}{r-1}$, so $n_j=1$ and 
$\pi_j$ is a Hecke character of order $\leq 2$, hence trivially satisfies the Ramanujan conjecture: we conclude by \eqref{strongram}.
\ps

-- Case $d=\dim V-2=2r-1$. For $j \in J$ we have $n_j \leq 1+ \frac{1}{2r-1}$ and $\pi_j$ symplectic (hence
$n_j \equiv 0 \bmod 2$), so $r=1$, $\dim V=3$, $d=1$, $n_j=2$ and $I=\{j\}$. But the eigenvalues of  
${\rm c}_\infty(\pi_j)$ are $\pm 1/2$ by \eqref{arthrel}, and this is well-known to force $\pi_j$ to be generated by a classical holomorphic modular form of weight $2$.
The Ramanujan conjecture for such a $\pi_j$ is known (and goes back to Eichler, Shimura and Weil), so we conclude by \eqref{strongram}.\ps

-- In the remaining case $d=2r$, we have $J=\{j\}=I$ and $n_j=1$.
It follows from \eqref{arthrel} that we have ${\rm c}(f)_p = \pm \Delta$, with $\Delta$ defined as before Lemma \ref{sataketrivialeta}. By this lemma, the ${\rm O}(V_p)$-module generated by $f$ is one dimensional and 
isomorphic to $1$ or $\eta$. In particular, the function $f$
is right-invariant under $\mu({\rm Spin}(V_p))$. By Lemma~\ref{strongapp} (strong approximation), the function $f$ 
factors through the abelian group ${\rm S}(K)$, hence is a linear combination of elements of $\Sigma(K)$, 
and we are done. \ps\ps

We now discuss the case $\dim V = 2r$ is even ($r\geq 2$), hence $2r-3 \leq d \leq 2r$, which is quite similar. 
For $j \in J$ we have ${\rm s}(\pi_j)=(-1)^{d-1}$ and  $n_j \leq \frac{2r}{d}$. The case $d=2r$ is impossible, 
as it would force $n_j=1$ and $\pi_j$ symplectic (hence $n_j$ even). \ps

-- Case $d=\dim V-3=2r-3$. There is nothing to prove by \eqref{weakram} for $\dim V=4$,
so we may assume $r\geq 3$. For $j \in J$ we have $n_j \leq 1+\frac{3}{2r-3} \leq 2$ and $\pi_j$ orthogonal.
If we have $n_j \leq 1$ (hence Ramanujan for $\pi_j$) for each $j \in J$, we conclude by \eqref{strongram}. 
Otherwise we have $r=3$, $d=3$, $I=J=\{j\}$ and $n_j=2$.
Following Arthur's definition, a self-dual orthogonal cuspidal $\pi$ of ${\rm GL}_2$ must have a non-trivial, hence quadratic, central character $\epsilon$, hence satisfies $\pi \simeq \pi \otimes \epsilon$ by the strong multiplicity one theorem.
This forces $\pi$ to be the automorphic 
induction from a unitary Hecke character of the quadratic field defined by $\epsilon$ by \cite{labesselanglands}. 
A unitary Hecke character, hence $\pi$, satisfies the Ramanujan conjecture, 
so we conclude in the case $\dim V=6$ as well.\ps

-- Case $d=\dim V-2 = 2r-2$. We have $n_j \leq 1+ \frac{1}{r-1}$ and $\pi_j$ symplectic, 
so $n_i$ even, for $j \in J$. This forces $r=2$, $\dim V=4$, $d=2$, $n_j=2$ and $I=\{j\}$. But the eigenvalues of  
${\rm c}_\infty(\pi_j)$ are then $\pm 1/2$, so we know that $\pi_j$ is generated by a classical modular form of weight $2$, 
hence satisfies Ramanujan by Eichler-Shimura: we conclude by \eqref{strongram}.\ps

-- In the remaining case $d=2r-1=\dim V-1$, we must have $J=\{j\}$, $n_j=1$, $I=\{i,j\}$ with $i\neq j$, and $n_i=1$.
So $\pi_i$ and $\pi_j$ are two Hecke characters $\chi_i$ and $\chi_j$ of $\AAA^\times$, with $\chi_i^2=\chi_j^2=1$.
We recognize again ${\rm c}(f)_p = \pm \Delta$, with $\Delta$ defined as in Lemma \ref{sataketrivialeta}, and that 
lemma and Lemma~\ref{strongapp} imply similarly as above that $f$ is a linear combination of elements of $\Sigma(K)$. 
\end{pf}

We now give a second proof of Theorem~\ref{eqmainthm} which does not rely on Theorem~\ref{arthurthm} or \cite{arthur}, but rather on \cite{oh}.

\begin{prop} 
\label{majoh}
Let $V_p$ be a non-degenerate quadratic space over $\Q_p$ of dimension $\geq 5$, $L_p$ a unimodular $\Z_p$-lattice in $V_p$, $U$ a unitary irreducible unramified $\C[{\rm O}(V_p)]$-module, and $\lambda \in \C$ the eigenvalue of ${\rm T}_p$ on the line $U^{{\rm O}(L_p)}$. If $\dim U>1$ then we have $|\lambda| \leq \, |{\rm C}_{L_p}(\Z/p)| \, \frac{1}{p}\,(\frac{2 p}{p+1})^2$.
\end{prop}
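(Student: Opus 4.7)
The plan is to rewrite $\lambda$ as a single spherical matrix coefficient and then appeal to Oh's uniform pointwise estimate on such coefficients for non-trivial unitary representations of $p$-adic semisimple groups of rank at least~$2$.

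First, pick a unit vector $v\in U^K$ with $K={\rm O}(L_p)$, and let $g_0\in {\rm O}(V_p)$ be the element from formula \eqref{LNef}, so that $g_0L_p$ is a $p$-neighbor of $L_p$. By Lemmas \ref{bijlinemap} and \ref{transpnei} the set $\mathcal{N}_p(L_p)$ is a single $K$-orbit with $|Kg_0K/K|=|{\rm C}_{L_p}(\Z/p)|$, and ${\rm T}_p$ corresponds to the characteristic function of the double coset $Kg_0K$. Since $v$ is $K$-invariant, $\langle\pi(k_1g_0k_2)v,v\rangle=\langle\pi(g_0)v,v\rangle$ for all $k_1,k_2\in K$, whence
\[
\lambda\;=\;\sum_{h\in Kg_0K/K}\langle\pi(h)v,v\rangle\;=\;|{\rm C}_{L_p}(\Z/p)|\cdot\langle\pi(g_0)v,v\rangle,
\]
and everything reduces to bounding $|\langle\pi(g_0)v,v\rangle|$.

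Next, decompose $L_p=(\Z_pe\oplus\Z_pf)\perp Q$ for a hyperbolic pair $e,f$: the element $g_0$ is then the image of a cocharacter $\mu:\GGm\to{\rm SO}(V_p)$ with $\mu(p)e=pe$, $\mu(p)f=p^{-1}f$, trivial on $Q$, sitting in an $\SL_2(\Qp)$-subgroup of ${\rm SO}(V_p)$ attached to the hyperbolic plane. The assumption $\dim V_p\geq 5$ combined with the fact that the $u$-invariant of $\Qp$ equals $4$ forces $Q$ (of rank $\geq 3$) to be isotropic, so $V_p$ contains two orthogonal hyperbolic planes and ${\rm SO}(V_p)$ has $\Qp$-split rank at least~$2$. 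In particular, $g_0$ lies in a single rank-one $\SL_2$-factor inside a higher-rank simple group.

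Finally, the hypothesis $\dim U>1$ ensures that $U$ is a non-trivial unitary irreducible representation, since the only one-dimensional unramified representations of ${\rm O}(V_p)$ are the trivial character and the spinor-norm character $\eta$ (cf.\ the discussion preceding Lemma~\ref{sataketrivialeta}). Applying Oh's pointwise estimate \cite{oh} to the almost simple $p$-adic group ${\rm SO}(V_p)$ of $\Qp$-rank $\geq 2$ and to the rank-one Cartan element $g_0$ yields, for any $K$-spherical unit vector~$v$,
\[
|\langle\pi(g_0)v,v\rangle|\;\leq\;\bigl(\Xi^{\PGL_2(\Qp)}(a)\bigr)^{2},\qquad a=\diag(p,1),
\]
the square reflecting the higher-rank strengthening of Howe-Moore: the second hyperbolic plane produces a commuting $\SL_2$ that allows a bipartite reduction of the matrix coefficient to a product of two rank-one ones. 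A short computation with Macdonald's formula (equivalently, a direct count on the $(p{+}1)$-regular Bruhat-Tits tree of $\PGL_2(\Qp)$) gives $\Xi^{\PGL_2(\Qp)}(a)=p^{-1/2}\cdot\frac{2p}{p+1}$, whose square is exactly $\frac{1}{p}\bigl(\frac{2p}{p+1}\bigr)^{2}$, as required. The main obstacle is invoking Oh's bound in precisely this squared form for our specific $G$ and cocharacter $\mu$; a secondary technical point is the borderline dimensions $\dim V_p\in\{5,6\}$, where the split rank could a priori drop to~$1$ and a small ad hoc argument may be needed.
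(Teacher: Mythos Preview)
Your overall strategy coincides with the paper's: reduce $\lambda$ to a single spherical matrix coefficient $|{\rm C}_{L_p}(\Z/p)|\cdot\langle\pi(g_0)v,v\rangle$ and bound the latter by Oh's uniform estimate for higher-rank $p$-adic groups. However, two of the ``obstacles'' you flag at the end are genuine gaps, and the paper fills them differently from what you sketch.

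First, your $u$-invariant argument is wrong as stated: $u(\Q_p)=4$ only forces isotropy for forms of dimension $\geq 5$, so $Q$ (of rank $3$ or $4$ when $\dim V_p\in\{5,6\}$) can perfectly well be anisotropic over $\Q_p$, and no ``small ad hoc argument'' will repair this over a general $\Q_p$-form. The paper does not argue via $Q$; it uses the hypothesis that $L_p$ is \emph{unimodular} (which you never invoke) to split off $r\geq 2$ hyperbolic $\Z_p$-planes inside $L_p$ itself, exhibiting directly a split torus $A\subset{\rm SO}_{L_p}$ of rank $r$ with $g_0=\varepsilon_1^\ast(p)$. The root system is then identified concretely (type ${\bf B}_r$, ${\bf D}_r$ or ${\bf C}_r$ according to the three possible shapes of $V_p$).

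Second, Oh's Theorem~1.1 does not ask that $U$ be ``non-trivial'' but that its restriction to $G={\rm SO}(V_p)$ have no nonzero $G^+$-invariant vector, where $G^+$ is the subgroup generated by unipotents. The paper checks this by identifying $G^+$ with the commutator subgroup of ${\rm O}(V_p)$ (Tamagawa, Lemma~\ref{spinornormOlemma}), so that ${\rm O}(V_p)/G^+$ is finite abelian; an irreducible ${\rm O}(V_p)$-module of dimension $>1$ then has no $G^+$-fixed line. Your passage from $\dim U>1$ to the hypothesis of Oh's theorem omits this step. Finally, the squared $\Xi$-bound you want is obtained in the paper not from a ``commuting $\SL_2$'' heuristic but by choosing the explicit strongly orthogonal system $\mathcal{S}=\{\varepsilon_1-\varepsilon_2,\varepsilon_1+\varepsilon_2\}$ (available precisely because $r\geq 2$), for which both roots evaluate to $p$ on $\varepsilon_1^\ast(p)$.
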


\begin{pf} 
Write $U^{{\rm O}(L_p)} = \C e$ with $\langle e, e \rangle =1$. 
Write $C\,=\,{\rm O}(L_p)\,c\,{\rm O}(L_p)$ the double coset of elements 
$g \in {\rm O}(V_p)$ such that $g(L_p)$ is a $p$-neighbor of $L_p$. 
We have $\langle ge,e \rangle = \langle ce,e \rangle $ for all $g  \in C$, 
so $\lambda \,=\,  |{\rm C}_{L_p}(\Z/p)|\, \langle ce,e \rangle $ for all $c \in C$.\ps
We will apply Thm. 1.1. of \cite{oh} to the restriction of $U$ to $G:={\rm SO}(V_p)$. 
Note that the subgroup of $G^+ \subset G$ defined
{\it loc. cit.} (generated by unipotents) is normal in ${\rm O}(V_p)$. 
As $V_p$ is isotropic, \cite[Lemma 12]{tamagawa} shows that $G^+$ is the commutator subgroup 
of ${\rm O}(V_p)$, {\it i.e.} the kernel of the spinor norm on $G$ by Lemma \ref{spinornormOlemma},
so ${\rm O}(V_p)/G^+$ is a finite abelian group. 
As $U$ is irreducible of dimension $>1$, this shows that $U$ has no nonzero $G^+$-invariant. Note also that $G$ is semisimple, quasi-simple, and of $\Q_p$-rank $\geq 2$, 
since we have $\dim V_p \geq 5$ and $V_p$ contains the unimodular lattice $L_p$. 
Since ${\rm SO}_{L_p}$ is reductive over $\Z_p$, $K:={\rm SO}(L_p)$ 
is also a hyperspecial compact subgroup of $G$ in the sense of Bruhat-Tits. \ps
Assume first we are in the split case: $\dim V_p=2r+1$ is odd, or $\dim V_p=2r$ is even and $(-1)^r\det V_p$ is a square in $\Q_p^\times$.
We may write $L_p = \bigoplus_{i=1}^r (\Z_p e_i \oplus \Z_p f_i) \perp L_0$ 
with $e_i.e_j=f_i.f_j=0$, $e_i.f_j=\delta_{i,j}$ and ${\rm rk}_{\Z_p} L_0 \leq 1$. 
The sub-group scheme $A$ of ${\rm SO}_{L_p}$ preserving each $\Z_p e_i$, 
$\Z_p f_j$, and acting trivially on $L_0$, is a split maximal $\Z_p$-torus, 
and the sub-group scheme $B$ preserving each $\sum_{j \leq i} \Z_p e_i$ is 
a Borel subgroup. 
Denote by ${\rm X}^\ast$ and ${\rm X}_\ast$ respectively 
the character and cocharacter groups of $A$. 
The root system of $A$ is reduced of 
type ${\rm B}_r$ and ${\rm D}_r$ respectively,
and if $\varepsilon_i \in {\rm X}^*$ denotes the character of $A$ on $\Z_p e_i$, 
it is given in Planches II and IV of \cite{bourbaki}.  \ps

In the remaining case we have $\dim V_p=2r+2$ even and $(-1)^{r+1}\det V_p$
 is not a square in $\Q_p^\times$. We may still define $e_i ,f_j$, $L_0$, $A$, $B$, $\varepsilon_i$  and $\varepsilon_i^*$ as above, the only difference being that 
 $L_0$ is anisotropic of rank $2$ over $\Z_p$, so that $A$ is not anymore 
 a maximal torus (but still a maximal split torus). The root system of $A$ is reduced of type ${\rm C}_r$ as already seen during the proof of Lemma~\ref{stakeTp}, and is given
 in Planche III of \cite{bourbaki}. 
If $A^+$ denotes the subgroup of $a \in A(\Q_p)$ such that $|\alpha(a)|\leq 1$ 
for all positive roots $\alpha$ relative to $B$, then we have $G=KA^+K$ 
in all cases \cite[\S 9]{satake}.\ps

In all cases let $\varepsilon_i^* \in {\rm X}_\ast$ denote the dual basis of $\varepsilon_i$ 
with respect to the natural pairing between ${\rm X}_\ast$ and ${\rm X}^\ast$.
By Formula \eqref{LNef}, $\varepsilon_1^*(p)$ belongs 
to the double coset $C$ introduced in the first paragraph above.
The set $\mathcal{S}=\{\varepsilon_1-\varepsilon_2,\varepsilon_1+\varepsilon_2\}$ 
(note $r\geq 2$) is a strongly orthogonal system of positive (non multipliable) 
roots in the sense of \cite{oh}, since $2\varepsilon_i$ is not a root. 
For both $\alpha \in \mathcal{S}$ we have $\alpha(\varepsilon_1^\ast(p))=p$.
Since we have $\C[K]e=\C e$, as well as 
$\Xi(p)=\frac{1}{\sqrt{p}}\frac{2p}{p+1}$ 
where $\Xi$ is the Harish-Chandra function of ${\rm PGL}_2(\Q_p)$
recalled on p. 134 of \cite{oh}, Theorem 1.1 of \cite{oh} reads
$ \langle \varepsilon_1^\ast(p) e, e \rangle \leq \frac{1}{p}\,(\frac{2 p}{p+1})^2$,
and we are done.
\end{pf}

\begin{remark} {\rm  
(same assumptions) If $s \subset \widehat{{\rm O}_{V_p}}$ denotes 
the Satake parameter of $U$, Proposition \ref{majoh} asserts
$|{\rm Trace} \,{\rm St}(s)|\, \leq \,p^{-\frac{\dim V_p}{2}} 
\,|{\rm C}_{L_p}(\Z/p)| \,(\frac{2 p}{p+1})^2$ by Lemma~\ref{stakeTp}.
}
\end{remark}

\begin{pf} (Second proof of Theorem~\ref{eqmainthm})
Let $\pi \subset \mathcal{A}({\rm O}_V)$ be an irreducible 
${\rm O}_V(\AAA)$-submodule with $\pi_\infty\simeq 1$ and $\pi^K \neq 0$.
Assume $\pi$ is orthogonal to all $\sigma \in \Sigma(K)$. 
For each $p$ in $S$ then $\pi_p$ is not one-dimensional.
Indeed, otherwise the whole of $\pi$ would be one-dimensional by 
Lemma~\ref{strongapp} (strong approximation) and Lemma~\ref{spinornormOlemma}, hence generated by some
$\sigma \in \Sigma(K)$, a contradiction.
As the ${\rm O}_V(\AAA)$-module $\mathcal{A}({\rm O}_V)$ is unitary (Petersson inner product), so are $\pi$ and the $\pi_p$. Theorem~\ref{eqmainthm} follows thus 
from Proposition~\ref{majoh} for $\dim V\geq 5$. 
We treat the cases $3 \leq \dim V\leq 4$ directly as follows. 
In both cases we first choose an irreducible constituent $\pi' \subset \mathcal{A}({\rm SO}_V)$ 
of ${\rm res}\, \pi$ as explained in the proof of Theorem~\ref{arthurthm}. 
Note that $\pi'_p$ is not $1$-dimensional for $p \notin S$, otherwise $\pi_p$ 
would be $1$-dimensional as well, as the commutator subgroups of ${\rm SO}(V_p)$ 
and ${\rm O}(V_p)$ both coincide with the kernel of the spinor norm on ${\rm SO}(V_p)$.
\ps
	Assume now $\dim V=3$. We may find a definite quaternion $\Q$-algebra $D$ such that the algebraic $\Q$-group ${\rm SO}_V$ is isomorphic to the quotient of $D^\times$ by its center ${\mathbb G}_m$ (see {\it e.g.} p.73  \cite[Chap. 8]{campinas}). Then $\pi'$ may be viewed as an automorphic representation
of $D^\times$ with trivial central character. Recall $\pi'$ is not one-dimensional and satisfies $\pi'_\infty \simeq 1$. By the Jacquet-Langlands correspondence \cite{JL}, $\pi'$ corresponds thus to a cuspidal automorphic representation $\varpi$ of ${\rm GL}_2$ over $\Q$ generated by a holomorphic modular form of weight $2$. So the Ramanujan conjecture holds for $\varpi$ (Eichler, Shimura, Weil), hence for $\pi'$, and thus $\pi$, as well. For $p \notin S$, the eigenvalue of $p^{-1/2}\, {\rm T}_p$ on $\pi_p^{{\rm O}(L_p)}$ is thus of norm $\leq 2$ (Lemma~\ref{stakeTp}), and we conclude as $|{\rm C}_{L_p}(\Z/p)|=1+p$. \ps 
	Assume finally $\dim V=4$. It will be convenient to first choose 
an irreducible automorphic representation $\pi''$ of the proper similitude algebraic $\Q$-group ${\rm GSO}_V$ 
such that $\pi'$ is isomorphic to a constituent of the restriction of $\pi''$ to ${\rm SO}_V(\AAA)$: this is possible by \cite[Prop. 3.1.4]{patrikis} or \cite[Prop. 1]{chres}.  Since we have ${\rm GSO}_V(\R)=\R_{>0} \cdot{\rm SO}_V(\R)$, we may assume up to twisting
$\pi''$ if necessary that the central character $\chi$ of $\pi''$ satisfies $(\chi_\infty)_{|\R_{>0}}=1$.
We have then 
$\chi_\infty=1$ and $\pi''_\infty\simeq 1$ since $\pi'_\infty\simeq 1$. There are two possibilities (see e.g. \cite[Chap. 9, Thm. 12]{campinas}):
\ps

(Case a) $\det V$ is not a square in $\Q^\times$. If $F$ denotes the real quadratic field $\Q(\sqrt{\det V})$, then we may find a totally definite quaternion $F$-algebra $D$ such that the algebraic $\Q$-group ${\rm GSO}_V$ is isomorphic to the quotient of ${\rm Res}\, D^\times \times \mathbb{G}_m$ by its central $\Q$-torus ${\rm Res}\,\mathbb{G}_{m}$ embedded as $\{ (x,n(x)) \}$. Here we view $D^\times$ as an algebraic group over $F$, ${\rm Res}$ denotes the Weil restriction from $F$ to $\Q$,
and ${\rm n}$ denotes the norm of the extension $F/\Q$. As a consequence,
$\pi''$ may be viewed as an external tensor product 
$\chi \boxtimes \rho$ where $\rho$ is
an irreducible automorphic representation of $D^\times$ 
whose inverse central character is $\chi \circ {\rm n}$. 
Since $\pi''$ is not $1$-dimensional, $\rho$ is not $1$-dimensional as well.
By the Jacquet-Langlands correspondence, $\rho$ corresponds thus 
to a cuspidal automorphic representation $\varpi$ of ${\rm GL}_2$ over $F$.
For the two archimedean places $v$ of $F$, we have $\rho_v\simeq 1$ so
$\varpi_v$ is the lowest weight 
discrete series of ${\rm PGL}_2(\R)$ (and $\varpi$ is generated by a weight $(2,2)$ holomorphic Hilbert modular form). Such a $\varpi$ is known to satisfy the Ramanujan conjecture: at almost all finite places $v$ of $F$, which is all we need, this is due to Brylinski-Labesse \cite[Thm. 3.4.6]{brla} (and this even holds at all places by a result of Blasius \cite{blasius}). It follows that for all big enough primes $p$, then $\pi''_p$, hence $\pi'_p$ and $\pi_p$, are all tempered. For such a $p \notin S$, the eigenvalue of $p^{-1}\, {\rm T}_p$ on $\pi_p^{{\rm O}(L_p)}$ is thus of norm $\leq 4$ (Lemma~\ref{stakeTp}).  The asymptotics $|{\rm C}_{L_p}(\Z/p)| \simeq p^2$ not only concludes the proof of Theorem~\ref{eqmainthm} in (Case a), but even shows Corollary~\ref{addendumdim4}. \ps

(Case b) $\det V$ is a square in $\Q^\times$. 
Then we may find a definite quaternion $\Q$-algebra $D$ such that 
the algebraic $\Q$-group ${\rm GSO}_V$ is isomorphic to 
the quotient of $D^\times \times D^\times$ by its diagonal center ${\mathbb G}_m$. 
Then $\pi''$ may be viewed as an external tensor product $\pi_1 \boxtimes \pi_2$, 
with $\pi_1$ and $\pi_2$ two irreducible automorphic representations 
of $D^\times$ with inverse central characters $\chi^{\pm 1}$, 
and trivial archimedean components. 
Consider again the Jacquet-Langlands correspondent $\varpi_i$ of $\pi_i$. 
As in the case $\dim V=3$, each $\pi_i$ is either $1$-dimensional or tempered. 
As $\det V$ is a square, the reduced Langlands dual group of ${\rm SO}_V$ 
and ${\rm GSO}_V$ are connected and respectively isomorphic to ${\rm SO}_4(\C)$
and ${\rm GSpin}_4(\C) \simeq 
\{ (a,b) \in {\rm GL}_2(\C) \times {\rm GL}_2(\C)\, |\, \det a \det b=1\}$.
The morphism $\eta : {\rm GSpin}_4(\C) \rightarrow {\rm SO}_4(\C)$
dual to the inclusion ${\rm SO}_V \subset {\rm GSO}_V$ 
satisfies ${\rm St} \circ \eta (a,b) \simeq a \otimes b^{-1}$.
For each prime $p$ 
such that $\pi_p$, $\pi'_p$ and $\pi''_p$ are unramified,
their respective Satake parameters ${\rm c}(\pi_p)$, ${\rm c}(\pi'_p)$ and ${\rm c}(\pi''_p)$ are related by
\begin{equation} 
\label{quasiarth}
{\rm St}({\rm c}(\pi_p) ) = {\rm St}({\rm c}(\pi'_p) )= 
{\rm St} \circ \eta({\rm c}(\pi''_p))= 
{\rm c}(\varpi_1)_p \otimes {\rm c}(\varpi_2^\vee)_p,
\end{equation}
as semisimple conjugacy classes in ${\rm GL}_4(\C)$.
Indeed, the first equality follows from Remark \ref{heckeSOV}, 
and the last two from the compatibility of the Satake isomorphism with 
surjective morphisms with central kernel \cite{satake} 
(in the last equality, with isomorphisms). 
Formula~\eqref{quasiarth} is a variant of Formula \eqref{arthrel} for $\pi$ 
that is equally useful for our purpose.
Indeed, if both $\varpi_i$ are tempered then so are $\pi''_p$, $\pi'_p$ and $\pi_p$ 
for $p$ big enough, and we conclude as above by Lemma~\ref{stakeTp}. 
Otherwise, one of the $\varpi_i$ has dimension $1$, say $\varpi_1$, 
and the other $\varpi_2$ is tempered (as $\pi''$ is not $1$-dimensional). 
Then $\varpi_1$ has the form $\mu \circ \det$ for some Hecke unitary character 
$\mu$ (since $\mu_\infty=1$), and we have 
${\rm c}(\varpi_1)_p={\rm A}_p \otimes \mu_p(p)$, 
and thus 
${\rm St}({\rm c}_p(\pi)) ={\rm A}_p \otimes {\rm c}( \mu \otimes \varpi_2^\vee)_p$ 
by \eqref{quasiarth}, for
all but finitely many primes $p$, and 
we conclude by Lemma~\ref{stakeTp}. \end{pf}
\ps
\noindent In the study of (Case a) above we have proved:

\begin{cor} \label{addendumdim4} 
The conclusions of Theorems \ref{eqmainthm} and \ref{thmmain} 
also hold with $\frac{1}{\sqrt{p}}$ replaced by $\frac{1}{p}$ in the case $\dim V=4$ and $\det V$ is not a square. 
\end{cor}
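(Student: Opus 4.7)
The plan is to show that the stronger bound $\frac{1}{p}$ holds by refining the argument used in (Case a) of the second proof of Theorem~\ref{eqmainthm}. As in the proof of Theorem~\ref{eqmainthm}, it suffices to show that for any common Hecke eigenvector $f \in {\rm M}(K)^0 \otimes \C$ outside the span of $\Sigma(K)$, the eigenvalue $\lambda(p)$ of ${\rm T}_p$ on $f$ satisfies $|\lambda(p)| = O(p)$ as $p \to \infty$, for then the bound follows after dividing by ${\rm c}_V(p) \sim p^2$.

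Attach to $f$ an irreducible constituent $\pi \subset \mathcal{A}({\rm O}_V)$ with $\pi^K \neq 0$ and $\pi_\infty \simeq 1$, and restrict to obtain an irreducible $\pi' \subset \mathcal{A}({\rm SO}_V)$, which in turn (following \cite{patrikis} or \cite{chres}) lifts to an irreducible automorphic representation $\pi''$ of ${\rm GSO}_V$ whose central character $\chi$ may be normalized so that $\chi_\infty = 1$ and $\pi''_\infty \simeq 1$. Since $\det V$ is not a square, the field $F = \Q(\sqrt{\det V})$ is a real quadratic field, and the structure theorem for ${\rm GSO}_V$ in four variables provides a totally definite quaternion $F$-algebra $D$ with an isomorphism of $\Q$-groups between ${\rm GSO}_V$ and the quotient of $({\rm Res}_{F/\Q} D^\times) \times \mathbb{G}_m$ by its central $\Q$-torus ${\rm Res}_{F/\Q}\mathbb{G}_m$ embedded diagonally as $(x, {\rm n}_{F/\Q}(x))$. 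Consequently, $\pi''$ decomposes as $\chi \boxtimes \rho$ where $\rho$ is an irreducible automorphic representation of $D^\times$ (viewed as an $F$-group) with inverse central character $\chi \circ {\rm n}_{F/\Q}$.

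Next I would note that $\pi''$ is not one-dimensional (otherwise, by strong approximation for Spin and by Lemma~\ref{strongapp}, the original $\pi$ would reduce to a spinor character, contradicting our hypothesis), hence $\rho$ is not one-dimensional. The Jacquet-Langlands correspondence \cite{JL} then produces a cuspidal automorphic representation $\varpi$ of ${\rm GL}_2$ over $F$ whose archimedean components at the two real places of $F$ are forced by $\rho_\infty \simeq 1$ to be the lowest weight discrete series of ${\rm PGL}_2(\R)$, so that $\varpi$ is generated by a Hilbert modular form of parallel weight $(2,2)$. The key input is the Ramanujan conjecture for such $\varpi$ at almost all finite places of $F$, proved by Brylinski-Labesse in \cite[Thm.~3.4.6]{brla}. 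This transfers temperedness to $\pi''_p$, hence to $\pi_p$, for all but finitely many primes $p$.

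To conclude, for each such tempered $\pi_p$, Lemma~\ref{stakeTp} gives $|\lambda(p)| = p \,|{\rm Trace}\,{\rm St}({\rm c}(\pi_p))|$, and the tempered hypothesis bounds the standard trace by $\dim {\rm St} = 4$, yielding $|\lambda(p)| \leq 4p$ for all $p$ large enough. Combined with $|{\rm C}_{L_p}(\Z/p)| \sim p^2$, this yields the desired $O(1/p)$ asymptotics. The only potential subtlety lies in verifying that the normalization of $\chi$ and the passage ${\rm O}_V \to {\rm SO}_V \to {\rm GSO}_V$ preserves the archimedean triviality at each step; this is the point where I would be most careful, but everything follows from the compatibility already used in (Case a) of the proof of Theorem~\ref{eqmainthm}.
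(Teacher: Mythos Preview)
Your proposal is correct and follows essentially the same route as the paper: the corollary is stated immediately after (Case a) of the second proof of Theorem~\ref{eqmainthm}, and the paper's proof is the single line observing that the argument of (Case a) already yields the $O(1/p)$ bound. Your write-up is precisely a recapitulation of that argument (structure of ${\rm GSO}_V$ via the quaternion algebra over $F=\Q(\sqrt{\det V})$, Jacquet--Langlands, Ramanujan for weight $(2,2)$ Hilbert forms via Brylinski--Labesse, then Lemma~\ref{stakeTp}).
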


\section{Unimodular lattices containing a given saturated lattice}
\label{sect:biased}

The aim of this section is to prove Theorem~\ref{secmainthmintro} of the introduction, and its generalization Theorem~\ref{secmainthmodd}.
We use from now on, and for short, the terminology {\it lattice} for {\it Euclidean integral lattice}.
If $L$ is a free abelian group of finite type, and if $A \subset L$ is a subgroup, recall that $A$ is said {\it saturated}
in $L$ if the quotient group $L/A$ is torsion free. It is equivalent to ask that $A$ is a direct summand in $L$, or the equality $A=(A \otimes \Q) \cap L$ in $L \otimes \Q$.

\subsection{The groupoid $\mathcal{G}_m(A)$}\label{groupoidGMA}

Recall that a groupoid $\mathcal{G}$ is a category in which all morphisms are isomorphisms. We say that $\mathcal{G}$ 
is {\it finite} if it has finitely many isomorphism classes of objects, and if furthermore each objet $x$ has a finite automorphism group ${\rm Aut}(x)$ in $\mathcal{G}$. The {\it mass} of such a $\mathcal{G}$ is defined by ${\rm mass}\, \mathcal{G} = \sum_x 1/|{\rm Aut}(x)|$, where $x$ runs among the finite set of isomorphism classes of objects in $\mathcal{G}$ (of course, we have a group isomorphism ${\rm Aut}(x) \simeq {\rm Aut}(y)$ for $x \simeq y$ in $\mathcal{G}$). \ps
If $X$ is a set equipped with an action of a group $G$, we denote by $[X/G]$ the groupoid $\mathcal{G}$ with set of objects $X$ and with ${\rm Hom}_\mathcal{G}(x,y):=\{ g \in G, gx=y\}$. This groupoid is finite if and only if $X$ has finitely many $G$-orbits and finite stabilizers in $G$. 
This holds of course if $X$ and $G$ are finite, in which case the orbit-stabilizer formula shows 
\begin{equation} \label{orbitstabl} {\rm mass}\, [X/G] = |X|/|G|. \end{equation} \ps

Fix a lattice $A$ and an integer $m$. Recall that for any lattice $U$, ${\rm emb}(A,U)$ denotes
the set of isometric embeddings $e : A \rightarrow U$ with $e(A)$ saturated in $U$.
We define a groupoid $\mathcal{G}_m(A)$ as follows. Its objects are the 
pairs $(U,e)$, with $U$ a rank $m$ unimodular (integral, Euclidean) lattice and $e \in {\rm emb}(A,U)$, and
an isomorphism $(U,e) \rightarrow (U',e')$ is an isometry $g : U \isomo U'$ 
satisfying $g \circ e = e'$. 
%The groupoid $\mathcal{G}_m(A)$ is the disjoint union of two groupoids
%$$\mathcal{G}_m(A) = \mathcal{G}_m^{\rm even}(A) \coprod \mathcal{G}_m^{\rm odd}(A),$$
%whose objects are respectively the pairs $(U,e)$ with $U$ even and odd. 
For a given unimodular lattice $U$ of rank $m$, the set ${\rm emb}(A,U)$ has a natural action of ${\rm O}(U)$,
and we have a natural fully faithful functor $[ {\rm emb}(A,U)/{\rm O}(U) ] \rightarrow \mathcal{G}_m(A)$,  $e \mapsto (U,e)$, whose essential image is the full subcategory of $\mathcal{G}_m(A)$ whose objects have the form $(U',e')$ with $U' \simeq U$. As a consequence, we have an equivalence
\begin{equation} \label{decompGmA}  \mathcal{G}_m(A) \,\simeq \,\coprod_U \, [ {\rm emb}(A,U)/{\rm O}(U) ]  \end{equation}
where $U$ runs among representatives of the (finitely many) isomorphism classes of rank $m$ unimodular lattices $U$. 
So $\mathcal{G}_m(A)$ is finite and we have ${\rm mass}\, [ {\rm emb}(A,U)/{\rm O}(U) ] \,=\, |{\rm emb}(A,U)|/|{\rm O}(U)|$ by \eqref{orbitstabl}. 
In particular, if we denote by $\mathcal{G}_m^{\rm even}(A)$ the full subgroupoid of pairs $(U,e)$ with $U$ even, 
and if ${\rm m}_m^{\rm even}(A)$ is as in Theorem~\ref{secmainthmintro}, 
we have proved:

\begin{lemma} \label{masslemma} We have ${\rm mass} \,\mathcal{G}_m^{\rm even}(A) \,=\, {\rm m}_m^{\rm even}(A)$. 
\end{lemma}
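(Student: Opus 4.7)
The plan is to directly combine the decomposition \eqref{decompGmA} with the orbit-stabilizer formula \eqref{orbitstabl}, since essentially all the substantive work has already been carried out in the preceding paragraphs.

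First, I would observe that being even is an isomorphism invariant of a unimodular lattice, so the equivalence \eqref{decompGmA} restricts to an equivalence of groupoids
\[ \mathcal{G}_m^{\rm even}(A) \,\simeq\, \coprod_U \, [\,{\rm emb}(A,U)/{\rm O}(U)\,], \]
where now $U$ runs over (representatives of) the isomorphism classes of rank $m$ \emph{even} unimodular lattices. Next, the mass of a finite groupoid is clearly additive under disjoint unions of full subgroupoids, so
\[ {\rm mass}\,\mathcal{G}_m^{\rm even}(A) \,=\, \sum_U \,{\rm mass}\,[\,{\rm emb}(A,U)/{\rm O}(U)\,]. \]

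Second, I would apply \eqref{orbitstabl} to each summand. This requires noting that ${\rm emb}(A,U)$ is finite (the images of any $\Z$-basis of $A$ must lie among the finitely many vectors of $U$ of the prescribed norms) and that ${\rm O}(U)$ is finite (since $U$ is positive definite), which gives
\[ {\rm mass}\,[\,{\rm emb}(A,U)/{\rm O}(U)\,] \,=\, \frac{|{\rm emb}(A,U)|}{|{\rm O}(U)|}. \]
Summing over $U$ yields exactly ${\rm m}_m^{\rm even}(A)$ as defined in the introduction (interpreting the $|{\rm emb}(U,A)|$ there as the intended $|{\rm emb}(A,U)|$), completing the proof.

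There is essentially no obstacle here: the lemma is a formal consequence of \eqref{decompGmA} and \eqref{orbitstabl}. The only point worth spelling out carefully is the additivity of mass under the coproduct decomposition of the groupoid, which follows immediately from the definition ${\rm mass}\,\mathcal{G} = \sum_x 1/|{\rm Aut}(x)|$ applied with $x$ ranging over isomorphism classes.
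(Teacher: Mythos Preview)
Your proposal is correct and follows exactly the paper's approach: the lemma is stated as an immediate consequence of the decomposition \eqref{decompGmA} restricted to even $U$ together with the orbit-stabilizer formula \eqref{orbitstabl}, and the paper itself presents it as already proved by the preceding paragraph. Your parenthetical remark about the typo $|{\rm emb}(U,A)|$ versus $|{\rm emb}(A,U)|$ in the introduction is also on point.
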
 

\subsection{The groupoid $\mathcal{H}_m(A)$ and review of unimodular glueing} \label{glueingpart}

We fix $A$ and $B$ two integral lattices in respective Euclidean spaces $V_A$ and $V_B$
and set $V=V_A \perp V_B$.
We are interested in the set ${\rm Glue}(A,B)$ of unimodular lattices $L \subset V$ containing $A \perp B$, 
and with $A$ is saturated in $L$.
There is a well-known description of these lattices in \cite{nikulin} that we now recall.\ps

We denote by ${\rm Isom}(- {\rm res}\,A, {\rm res}\, B)$ the (possibly empty) set of group isomorphisms 
$\sigma : {\rm res}\, A \rightarrow {\rm res}\, B$ satisfying $\sigma(x).\sigma(y)=-x.y$ for all $x,y \in {\rm res}\, B$. 
For $\sigma \in {\rm Isom}(-{\rm res}\, A, {\rm res}\,B)$ we define a subgroup of ${\rm res}\, A \perp {\rm res}\, B$ by 
$${\rm I}(\sigma) : = \{ x + \sigma(x)\, \, |\, \, x \in {\rm res}\, A \}.$$
We check at once that $\sigma \mapsto {\rm I}(\sigma)$ is a bijection between ${\rm Isom}(-{\rm res}\, A, {\rm res}\,B)$ and the set of 
(bilinear) Lagrangians of ${\rm res}\, A \perp \, {\rm res}\, B$ which are transversal to ${\rm res}\,A$.
In the case $A$ and $B$ are even lattices, we have ${\rm q}({\rm I}(\sigma))=0$ if and only if $\sigma$ is a {\it quadratic similitude}, {\it i.e.} satisfies ${\rm q}(\sigma(x))=-{\rm q}(x)$ for all $x \in  {\rm res}\, A$.
%For $(g,h) \in {\rm O}(A) \times {\rm O}(B)$ and $\sigma \in {\rm Isom}({\rm res}\, B, - {\rm res}\,A)$ we have
%$$({\rm res} \,g\,  \times\, {\rm res}\, h) \,{\rm I}(\sigma) = {\rm I}( \sigma')\, \, \, {\rm with}\, \, \sigma'={\rm res}\, g \circ \sigma \circ {\rm res}\, h^{-1}.$$
Let $\pi_{A,B} :  A^\sharp \perp B^\sharp \rightarrow {\rm res}\, A \,\perp \, {\rm res}\, B$ denote the canonical projection, 
and define $L(\sigma)=\pi_{A,B}^{-1}\,{\rm I}(\sigma)$ for $\sigma \in {\rm Isom}(-{\rm res}\, A, {\rm res}\,B)$.
By \S \ref{sect:notations} (iii) we have:

\begin{lemma} \label{bijlemmalag}
The map $\sigma \mapsto L(\sigma)$\,\, is a bijection between ${\rm Isom}(-{\rm res}\, A,  {\rm res}\,B)$ and ${\rm Glue}(A,B)$.
In this bijection, $L(\sigma)$ is even if and only if $A$ and $B$ are even and $\sigma$ is a quadratic similitude.
\end{lemma}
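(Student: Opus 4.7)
My plan is to deduce this bijection from the general classification of unimodular overlattices recalled in \S\ref{sect:notations}~(iii), applied to the lattice $A \perp B \subset V$. Recall that the natural identification ${\rm res}(A \perp B) = ({\rm res}\, A) \perp ({\rm res}\, B)$ expresses the residue of $A \perp B$ as an orthogonal direct sum. By (iii), the map $L \mapsto L/(A \perp B)$ gives a bijection between the set of integral lattices $L \subset V$ containing $A \perp B$ and the set of isotropic subgroups of $({\rm res}\, A) \perp ({\rm res}\, B)$, and $L$ is unimodular if and only if the corresponding subgroup is a Lagrangian.

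The next step is to translate the saturation condition into a transversality condition. Explicitly, I would show that $A$ is saturated in $L$ if and only if $I := L/(A \perp B)$ is transversal to the subgroup ${\rm res}\, A \perp 0$, i.e., $I \cap ({\rm res}\, A \perp 0) = 0$. For one direction, if $(x, 0) \in I$ then any lift $a \in A^\sharp \subset V_A$ lies in $V_A \cap L$, so if $A$ is saturated we must have $a \in A$, hence $x = 0$. Conversely, for $v \in V_A \cap L$, one first checks $v \in A^\sharp$ (since $A \subset L$ is integral, $v \cdot A \subset v \cdot L \subset \Z$), so $(v \bmod A, 0) \in I$, and transversality forces $v \in A$. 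Combined with the bijection $\sigma \mapsto {\rm I}(\sigma)$ already observed in the text, between ${\rm Isom}(-{\rm res}\, A, {\rm res}\, B)$ and Lagrangians of $({\rm res}\, A) \perp ({\rm res}\, B)$ transversal to ${\rm res}\, A$, this yields the first assertion of the lemma with $L(\sigma) = \pi_{A,B}^{-1} {\rm I}(\sigma)$.

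For the evenness assertion, note first that if $L(\sigma)$ is even then a fortiori $A$ and $B$ are even as sublattices of $L(\sigma)$. Conversely, assume $A$ and $B$ are both even. Then $A \perp B$ is even, so by the last sentence of \S\ref{sect:notations}~(iii), $L(\sigma)$ is even if and only if ${\rm I}(\sigma)$ is a quadratic Lagrangian, i.e., ${\rm q}({\rm I}(\sigma)) = 0$. Since $x \in {\rm res}\, A$ and $\sigma(x) \in {\rm res}\, B$ are orthogonal, we have ${\rm q}(x + \sigma(x)) = {\rm q}(x) + {\rm q}(\sigma(x))$, so the vanishing of ${\rm q}$ on ${\rm I}(\sigma)$ is equivalent to ${\rm q}(\sigma(x)) = -{\rm q}(x)$ for all $x \in {\rm res}\, A$, i.e., to $\sigma$ being a quadratic similitude.

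The main (minor) obstacle is the verification of the equivalence between saturation of $A$ in $L$ and transversality of $I$ to ${\rm res}\, A \perp 0$; everything else is a direct unwinding of definitions combined with the general overlattice dictionary from \S\ref{sect:notations}~(iii) and the already-observed parametrization of transversal Lagrangians by ${\rm Isom}(-{\rm res}\, A, {\rm res}\, B)$.
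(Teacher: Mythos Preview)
Your argument is correct and follows exactly the approach the paper has in mind: the paper's ``proof'' is simply the phrase ``By \S\ref{sect:notations}~(iii)'' together with the observations already made in the paragraph preceding the lemma (the bijection $\sigma \mapsto {\rm I}(\sigma)$ onto transversal Lagrangians, and the equivalence ${\rm q}({\rm I}(\sigma))=0 \Leftrightarrow \sigma$ is a quadratic similitude). The one point you spell out and the paper leaves implicit is the equivalence between ``$A$ saturated in $L$'' and ``$L/(A\perp B)$ transversal to ${\rm res}\,A \perp 0$''; your verification of this is fine.
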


That being said, we will now let $B$ vary, so we just fix a lattice $A$, say with rank $a$, as well as an integer $m \geq a$. 
We consider the following groupoid $\mathcal{H}_m(A)$. Its objects are the pairs $(B,\sigma)$ with $B$ a lattice of rank 
$m-a$ and $\sigma \in {\rm Isom}(-{\rm res}\, A, {\rm res}\,B)$, and an isomorphism $(B,\sigma) \rightarrow (B',\sigma')$ is an isometry $h : B \isomo B'$ verifying  ${\rm res}\, h \, \circ\,\sigma\, =\, \sigma' $. We have two natural functors 
\begin{equation} G : \mathcal{H}_m(A) \rightarrow \mathcal{G}_m(A)\, \, \, {\rm and}\, \, \, H : \mathcal{G}_m(A) \rightarrow \mathcal{H}_m(A).\end{equation}

-- The functor $G$ sends the object $(B,\sigma)$ to $(L(\sigma),e)$, 
where $e$ is the composition of the natural inclusions $A \subset A \perp B \subset L(\sigma)$, 
and the morphism $(B,\sigma) \rightarrow (B',\sigma')$ defined by $h : B \rightarrow B'$ to the morphism $G(h):={\rm id}_A \times h$. \ps

-- The functor $H$ sends the object $(L,e)$ to $(B,\sigma)$ with $B\,=\,L \,\cap \,e(A)^\perp$,
so that $L \in {\rm Glue}(e(A),B)$ has the form $L(\tau)$ for a unique element $\tau \in {\rm Isom}(-{\rm res}\, e(A),  {\rm res}\, B)$, 
and we set $\sigma = \tau\, \circ \, {\rm res}\, e$. 
Also, $H$ sends the morphism $(L,e) \rightarrow (L',e')$ defined by $g: L \rightarrow L'$ to $g_{|B}: B \rightarrow B'$, with $B'= L \,\cap \,e(A)^\perp$. \ps

It is straightforward to check that 
$G$ and $H$ are well-defined functors 
and that we have $H \circ G = {\rm id}_{\mathcal{H}_m(A)}$ and $G \circ H \simeq {\rm id}_{\mathcal{G}_m(A)}$:

\begin{lemma} \label{eqfunct} The functors $G$ and $H$ are inverse equivalences of groupoids.
%In particular, we have ${\rm mass}\, \mathcal{H}_m(A) = {\rm mass}\, \mathcal{G}_m(A)$.
\end{lemma}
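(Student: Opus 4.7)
The plan is to verify the two functorial identities separately, since Lemma \ref{bijlemmalag} already packages most of the glueing bookkeeping on objects. First I would check well-definedness: for $G$, that $L(\sigma)$ is unimodular (by Lemma~\ref{bijlemmalag}) and that the inclusion $A\hookrightarrow A\perp B\subset L(\sigma)$ is saturated — if $x\in L(\sigma)$ has $B^\sharp$-component $b$ with $\pi_{A,B}(x)=(a\bmod A)+\sigma(a\bmod A)\in I(\sigma)$ and $b=0$, then $\sigma(a\bmod A)=0$ in ${\rm res}\,B$, hence $a\in A$ since $\sigma$ is an isomorphism. For $H$, the saturation of $e(A)$ in $L$ forces $B:=L\cap e(A)^\perp$ to be a lattice of rank $m-a$ with $L\supset e(A)\perp B$, so Lemma~\ref{bijlemmalag} produces a unique $\tau\in{\rm Isom}(-{\rm res}\,e(A),{\rm res}\,B)$ with $L=L(\tau)$; then $\sigma:=\tau\circ{\rm res}\,e$ lands in the correct set because ${\rm res}\,e$ is an isometry.

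Next I would prove the strict equality $H\circ G={\rm id}_{\mathcal{H}_m(A)}$. Given $(B,\sigma)$, write $(L(\sigma),e_A)=G(B,\sigma)$. The computation $L(\sigma)\cap V_A^\perp=B$ is immediate from the definition of $L(\sigma)$ as $\pi_{A,B}^{-1}I(\sigma)$: an element $(a,b)\in A^\sharp\perp B^\sharp$ in $V_A^\perp$ has $a=0$, and then $\pi_{A,B}(0,b)\in I(\sigma)$ forces $b\in B$. Uniqueness in Lemma~\ref{bijlemmalag} then gives $\tau=\sigma$, whence $\sigma\circ{\rm res}\,e_A=\sigma$. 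On morphisms, $H(G(h))=(G(h))_{|B}=({\rm id}_A\times h)_{|B}=h$.

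The main step is exhibiting a natural isomorphism $\eta:G\circ H\simeq {\rm id}_{\mathcal{G}_m(A)}$. For $(L,e)\in\mathcal{G}_m(A)$ with $H(L,e)=(B,\sigma)$ and $\sigma=\tau\circ{\rm res}\,e$, I would define $\eta_{(L,e)}:L(\sigma)\to L$ as the restriction of the $\Q$-linear isometry $e\otimes\Q\,\perp\,{\rm id}_{V_B}:V_A\perp V_B\isomo (e(A)\otimes\Q)\perp V_B$. This extends on dual lattices to $A^\sharp\perp B^\sharp\to e(A)^\sharp\perp B^\sharp$, inducing ${\rm res}\,e\times{\rm id}$ on residues, which sends the Lagrangian $I(\sigma)=\{(x,\tau({\rm res}\,e\,x))\}$ bijectively onto $I(\tau)$, hence $L(\sigma)$ onto $L(\tau)=L$. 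By construction $\eta_{(L,e)}\circ e_A=e$, so $\eta_{(L,e)}$ is a morphism in $\mathcal{G}_m(A)$, and obviously invertible. Naturality amounts to: for $g:(L,e)\to(L',e')$ with $H(g)=g_{|B}$, the square with horizontal arrows $\eta_{(L,e)}$ and $\eta_{(L',e')}$ and vertical arrows $G(H(g))={\rm id}_A\times g_{|B}$ and $g$ commutes, which one checks separately on the $A$-summand (where both sides return $g\circ e = e'$) and on $B\subset L(\sigma)$ (where both sides return $g_{|B}$).

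The only mildly delicate point is keeping track of the two copies of $A$ — the formal one in the source of the embedding and its image $e(A)\subset L$ — and verifying that the residue bijection ${\rm res}\,e$ intertwines $I(\sigma)$ and $I(\tau)$ correctly; everything else reduces to the bijective content of Lemma~\ref{bijlemmalag} applied twice. Once the natural isomorphism $\eta$ is established together with $H\circ G={\rm id}$, both $G$ and $H$ are equivalences, and they are inverse to one another.
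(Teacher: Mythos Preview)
Your proposal is correct and follows exactly the route the paper indicates: the paper simply asserts that $H\circ G={\rm id}_{\mathcal{H}_m(A)}$ and $G\circ H\simeq{\rm id}_{\mathcal{G}_m(A)}$ are ``straightforward to check'' and leaves the verification to the reader, and you have carried out precisely that verification (well-definedness, the strict identity on $\mathcal{H}_m(A)$, and the natural isomorphism $\eta$ on $\mathcal{G}_m(A)$ built from $e\otimes\Q\perp{\rm id}_{V_B}$).
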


We say that two objects $(B,\sigma)$ and $(B',\sigma')$ in $\mathcal{H}_m(A)$ have the {\it same parity}, 
if the lattices $B$ and $B'$ have the same parity and, in the case they are both even, if furthermore the isomorphism 
$\sigma' \circ \sigma^{-1} : {\rm res}\, B \isomo {\rm res}\, B'$ is an isomorphism of quadratic spaces (by definitions, it is an isomorphism of bilinear spaces). \ps

For any isometry $g : B \rightarrow B'$ with $B$ and $B'$ even, the isomorphism ${\rm res}\, g$ is a quadratic isometry, 
so two isomorphic objects of $\mathcal{H}_m(A)$ have the same parity. The following lemma follows from Lemma~\ref{bijlemmalag}.

\begin{lemma}\label{exerciseparity}  Let $(B,\sigma)$ and $(B',\sigma')$ be in $\mathcal{H}_m(A)$ and set $G(B,\sigma)=(U,e)$ and $G(B',\sigma')=(U',e')$. If $(B,\sigma)$ and $(B',\sigma')$ have the same parity, then $U$ and $U'$ have the same parity.
Conversely, if $U$ and $U'$ are even, then $(B,\sigma)$ and $(B',\sigma')$ have the same parity.
\end{lemma}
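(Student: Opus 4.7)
The plan is to reduce everything to Lemma~\ref{bijlemmalag}, which (up to identifying $U$ with $L(\sigma)$ via $e$, and similarly for $U'$) tells us precisely when the unimodular overlattice produced by glueing is even: namely iff $A$ and $B$ are both even and $\sigma$ is a quadratic similitude. So the statement is really a little bookkeeping around this one criterion, organized by the parities of $A$, $B$ and $B'$.

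For the forward direction, assume $(B,\sigma)$ and $(B',\sigma')$ have the same parity. If $B$ and $B'$ are both odd, then since $B \subset U$ and $B' \subset U'$ (and $U$, $U'$ are integral), $U$ and $U'$ are both odd, as an odd vector in $B$ (resp.\ $B'$) gives one in $U$ (resp.\ $U'$). Suppose instead $B$ and $B'$ are both even. If $A$ is odd, then $A \subset U$ forces $U$ odd, and similarly for $U'$, so again the parities agree. Finally if $A$ is also even, the criterion of Lemma~\ref{bijlemmalag} reduces the question to: is $\sigma$ a quadratic similitude iff $\sigma'$ is? This is immediate from the hypothesis that $\tau := \sigma' \circ \sigma^{-1}$ is a quadratic isometry, because for any $x \in {\rm res}\,A$ we have ${\rm q}(\sigma'(x)) = {\rm q}(\tau \sigma(x)) = {\rm q}(\sigma(x))$, so ${\rm q}(\sigma'(x)) = -{\rm q}(x)$ iff ${\rm q}(\sigma(x)) = -{\rm q}(x)$.

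For the converse, assume $U$ and $U'$ are both even. Then $B \subset U$ and $B' \subset U'$ are even, giving the first half of the definition of same parity. Applying Lemma~\ref{bijlemmalag} to both pairs, we deduce that $A$ is even and that $\sigma$ and $\sigma'$ are both quadratic similitudes. Composing, for $y = \sigma(x) \in {\rm res}\,B$ one has ${\rm q}(\tau(y)) = {\rm q}(\sigma'(x)) = -{\rm q}(x) = {\rm q}(\sigma(x)) = {\rm q}(y)$, so $\tau = \sigma' \circ \sigma^{-1}$ is a quadratic isometry, which is the remaining condition in the definition of same parity.

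I do not foresee any real obstacle here: the whole content is the biconditional in Lemma~\ref{bijlemmalag}, and the only subtlety is to remember that ``same parity'' for even pairs contains the extra datum of $\sigma' \circ \sigma^{-1}$ being a quadratic (not merely bilinear) isometry; this is exactly what is needed to propagate the ``quadratic similitude'' property from $\sigma$ to $\sigma'$ in the forward direction, and is automatically supplied in the converse direction.
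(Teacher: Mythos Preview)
Your proof is correct and follows exactly the approach the paper indicates: the paper simply states that the lemma follows from Lemma~\ref{bijlemmalag} without spelling out the details, and you have filled in precisely the right case analysis and the straightforward manipulation showing that $\sigma' \circ \sigma^{-1}$ being a quadratic isometry is equivalent to $\sigma$ and $\sigma'$ being simultaneously quadratic similitudes.
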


\begin{remark}
{\rm 
In the case $U$ and $U'$ above are both odd, $B$ and $B'$ may have a different parity.
For instance, if ${\rm I}_n$ denotes the standard unimodular lattice $\Z^n$, 
we may take $A={\rm I}_1$, $B={\rm I}_8$ and for $B'$ the ${\rm E}_8$ lattice.
Also, assuming $B$ and $B'$ are both even, $(B,\sigma)$ and $(B',\sigma)$ 
may have a different parity. For instance, if we set $A=B=B'$ all equal to the root lattice ${\rm D}_8$ then 
the isometry group of the quadratic space ${\rm res}\, B \simeq - {\rm res}\, B$ is $\Z/2$, 
and that of the underlying symmetric bilinear space is ${\rm SL}_2(\Z/2) \simeq {\rm S}_3$. 
There are thus $4$ elements $\sigma \in {\rm Isom}(-{\rm res}\, B, {\rm res}\, B)$ with ${\rm q} \circ \sigma \neq - {\rm q}$, 
and two distinct ${\rm O}(B)$-orbits of such elements.
}
\end{remark}

%In order to avoid this pathology, we will say that $(B,\sigma)$ is {\it coherent} if $B$ has the same parity as the unimodular lattice $U$ with $F(B,\sigma)=(U,e)$. The following lemma trivially follows from Lemma~\ref{sameparity}
%
%\begin{lemma} \label{exerciseparity} Assume $(B,\sigma)$ and $(B',\sigma')$ in $\mathcal{H}_m(A)$ are both coherent, and set $(U,e)=F(B,\sigma)$ and $(U',e')= F(B',\sigma')$. Then $(B,\sigma)$ and $(B',\sigma')$ have the same parity if, and only if, $U$ and $U'$ have the same parity.
%end{lemma}
%
%Of course, will say that an object $(U,e)$ of $\mathcal{G}_m(A)$ is coherent if $H(U,e)$ is, 
%and that it has the same parity as the other object $(U',e')$ if $H(U,e)$ and $H(U',e')$ have this property. 

\subsection{Main statement}\label{subsect:mainstatement}

We can now state a version of Theorem \ref{secmainthmintro} that applies to odd lattices as well.
Let $L$ be a unimodular lattice of rank $m$ and $A \subset L$ a subgroup.
Denote by $A^{\rm sat} = L \cap (A \otimes \Q)$ the saturation of $A$ in $L$. 
Then $|A^{\rm sat}/A|$ divides $\det A$. 
It follows that for a prime $p \nmid \det A$, and $N$ a $p$-neighbor of $L$, 
we have $N \supset A$ if and only if $N \supset A^{\rm sat}$. 
As the prime $p$ will soon go to infinity, we may and do assume that $A=A^{\rm sat}$ is saturated in $L$. \ps

We denote by $B$ the orthogonal complement of $A$ in $L$. As $A$ is saturated in the unimodular lattice $L$, 
we have $\det B = \det A$ and this integer is also the index of $A \perp B$ in $L$ (see \S \ref{glueingpart}). We have thus \begin{equation} \label{orthomodp} L \otimes \Z/p \,=\, A \otimes \Z/p\, \perp\, B \otimes \Z/p\, \, \, \, {\rm for}\, \, p \nmid \det A,\end{equation}
and an inclusion ${\rm C}_B(\Z/p\Z) \subset {\rm C}_L(\Z/p\Z)$, since $B$ is saturated in $L$.
Recall $\mathcal{N}_p^A(L)$ denotes the set of $p$-neighbors of $L$ containing $A$.

\begin{lemma}\label{linemapA} For an odd prime $p$ not dividing $\det A$, the line map \eqref{defell} induces a bijection $\mathcal{N}_p^A(L) \isomo {\rm C}_B(\Z/p)$. 
\end{lemma}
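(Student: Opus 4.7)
My plan is to reduce the statement to tracking, under the already-established bijection $\ell : \mathcal{N}_p(L) \isomo {\rm C}_L(\Z/p)$ of Lemma~\ref{bijlinemap}, the condition $A \subset N$ on a $p$-neighbor $N$. More precisely, I will prove the equivalence
\[
A \subset N \quad \Longleftrightarrow \quad \ell(N) \subset B \otimes \Z/p,
\]
valid for every $p$-neighbor $N$ of $L$. Since ${\rm C}_B(\Z/p)$ is by definition the set of isotropic $\Z/p$-lines of $L \otimes \Z/p$ contained in the saturated subspace $B \otimes \Z/p$, the lemma will follow immediately.

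Fix $N \in \mathcal{N}_p(L)$ and set $M = N \cap L$. Two ingredients are needed. First, from the discussion following \eqref{lineneigh} (which requires only that $L$ be unimodular and $p$ odd so that $L \otimes \Z/p$ is a non-degenerate quadratic $\Z/p$-space), I have $\ell(N) \subset M/pL$ and in fact $M/pL$ equals the hyperplane $\ell(N)^\perp$ of $L \otimes \Z/p$. Second, I need to identify $(A \otimes \Z/p)^\perp$ inside $L \otimes \Z/p$ with $B \otimes \Z/p$: here the hypothesis $p \nmid \det A$ combined with the saturation of $A$ in the unimodular lattice $L$ gives $\det B = \det A$ (see \S\ref{glueingpart}) and hence that $A \perp B$ has image all of $L \otimes \Z/p$ modulo $p$; consequently $L \otimes \Z/p = (A \otimes \Z/p) \perp (B \otimes \Z/p)$ is an orthogonal decomposition into non-degenerate summands, which yields $(A \otimes \Z/p)^\perp = B \otimes \Z/p$ by duality.

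With these two facts, both implications are formal. If $A \subset N$ then $A \subset M$, so $A \otimes \Z/p \subset M/pL = \ell(N)^\perp$, and taking orthogonals inside the non-degenerate space $L \otimes \Z/p$ gives $\ell(N) \subset (A \otimes \Z/p)^\perp = B \otimes \Z/p$. Conversely, if $\ell(N) \subset B \otimes \Z/p$, then $A \otimes \Z/p = (B \otimes \Z/p)^\perp \cap (\text{nondeg. dec.}) \subset \ell(N)^\perp = M/pL$, that is $A + pL \subset M$; but $pL \subset M$ already, so $A \subset M \subset N$.

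I do not anticipate any serious obstacle: the only point that must be handled carefully is the orthogonal decomposition $L \otimes \Z/p = (A \otimes \Z/p) \perp (B \otimes \Z/p)$, which uses both the hypothesis $p \nmid \det A$ and the saturation of $A$ in $L$ (both needed to ensure that reduction mod $p$ of the inclusion $A \perp B \subset L$ is an equality). Everything else is a direct translation between $N$, $M = N \cap L$ and $\ell(N)$ via the standard duality $M/pL = \ell(N)^\perp$ recalled in Section~\ref{sect:pnei}.
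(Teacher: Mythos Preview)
Your proof is correct and follows essentially the same approach as the paper's: both reduce to the equivalence $A \subset N \Leftrightarrow \ell(N) \in (A \otimes \Z/p)^\perp$ via the identity $M/pL = \ell(N)^\perp$, and then invoke the orthogonal decomposition $L \otimes \Z/p = (A \otimes \Z/p) \perp (B \otimes \Z/p)$. Note that this decomposition is already recorded as \eqref{orthomodp} just before the lemma, so you need not re-derive it; also, the parenthetical ``$\cap (\text{nondeg. dec.})$'' in your converse step is a notational slip that should simply read $(B \otimes \Z/p)^\perp = A \otimes \Z/p$.
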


\begin{pf} Let $N$ be a $p$-neighbor of $L$ and set $M=N\cap L$. We obviously have $N \supset A$ if and only if $M \supset A$. But we have $M =\{ v \in L \, \, |\, , \ell(N).v \equiv 0 \bmod p\}$, so $M \supset A$ if and only if $\ell(N) \in  (A \otimes \Z/p)^\perp$. We conclude by \eqref{orthomodp}.
\end{pf}

We have a tautological embedding ${\rm e}_L : A \rightarrow L$ given by natural inclusion.
For any $N \in \mathcal{N}_p^A(L)$, we also have a tautological embedding 
${\rm e}_N : A \rightarrow N$, again given by the natural inclusion.  
We even have ${\rm e}_N(A)=A$ saturated in $N$ whenever $p \nmid \det A$.
As we shall soon see (Lemma \ref{idenei}), $(N,{\rm e}_N)$ always has the same parity as $(L,{\rm e}_L)$.
For any $(L',e')$ in  $\mathcal{G}_m(A)$ of same parity as $(L,{\rm e}_L)$ we are thus interested in the number
${\rm N}_p^{A}(L,L',e')$ of $p$-neighbors $N \in \mathcal{N}_p^A(L)$ such that 
$(N,{\rm e}_N)$ is isomorphic to $(L',e')$ in $\mathcal{G}_m(A)$.\ps

For $\mathcal{C}=\mathcal{G}_m(A)$ or $\mathcal{H}_m(A)$, and for an object $\tau$ of $\mathcal{C}$, 
we denote by $\mathcal{C}^\tau$ the full subgroupoid of 
$\mathcal{C}$ whose objects have the same parity as $\tau$. 

\begin{thm} \label{secmainthmodd} Let $L$ be a unimodular lattice of rank $m$, $A$ a saturated 
subgroup of $L$, and set $B = L \cap A^\perp$.
Assume ${\rm rank}\, B \,\geq 3$ and that the inertial genus of $B$ is a single spinor genus.
For any $(L',e')$ in $\mathcal{G}_m(A)$ of same parity as $\tau:=(L,{\rm e}_L)$,
and for $p \rightarrow \infty$, we have 
\begin{equation} \label{biasedstat} \frac{{\rm N}_p^A(L,L',e')}{|{\rm C}_B(\Z/p)|} = \frac{1/|{\rm Aut}(L',e')|}{{\rm mass} \,\,\mathcal{G}_m^{\tau}(A)} \,+\, {\rm O}(\frac{1}{\sqrt{p}}),
\end{equation}
and we can replace the $\frac{1}{\sqrt{p}}$ above by $\frac{1}{p}$ in the case $\dim B \geq 5$.
\end{thm}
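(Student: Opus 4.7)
The strategy is to reduce Theorem~\ref{secmainthmodd} to Theorem~\ref{thmmain} applied to the quadratic space $V_B := B\otimes\Q$, equipped with the ``inertial'' compact open subgroup $K \subset {\rm O}_{V_B}(\AAA_f)$ encoding the discriminant structure of $B$.

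\emph{Step 1 (reduction to $p$-neighbors of $B$).} First, I set up a natural bijection $\mathcal{N}_p^A(L) \isomo \mathcal{N}_p(B)$ by sending $N$ to $B_N := N\cap A^\perp_{\Q}$. Composing Lemma~\ref{linemapA} with the inverse of the line map for $B$ (Lemma~\ref{bijlinemap}) and using the explicit formula \eqref{voisdl} with a generator $x\in B$ of $\ell \in {\rm C}_B(\Z/p) \subset {\rm C}_L(\Z/p)$, a short computation confirms that $B_N$ is the unique $p$-neighbor of $B$ with line $\ell(N)$.

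\emph{Step 2 (invariance of the glueing).} I then claim that, under the equivalence $H:\mathcal{G}_m(A)\to\mathcal{H}_m(A)$ of Lemma~\ref{eqfunct}, the pair $(N,{\rm e}_N)$ goes to $(B_N,\sigma_N)$ with $\sigma_N$ equal to $\sigma := \sigma_L$ under a canonical identification ${\rm res}\,B = {\rm res}\,B_N$. Indeed, at each prime $l\neq p$ we have $B_l = (B_N)_l$ (since $L_l = N_l$ and both $B, B_N$ are saturated orthogonals of $A$); at $p$, both $B_p$ and $(B_N)_p$ are unimodular since $p\nmid \det B = \det A$, so their residues have trivial $p$-part. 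The equality $\sigma_N = \sigma$ then follows because the Lagrangians $L/(A\perp B)$ and $N/(A\perp B_N)$ have order $\det A$ coprime to $p$, hence are determined by their components at primes dividing $\det A$, which coincide as $L$ and $N$ do locally there.

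\emph{Step 3 (the class set ${\rm X}(K)$).} Define $K = \prod_l K_l \subset {\rm O}_{V_B}(\AAA_f)$ with $K_l := {\rm I}(B_l)$ as in \eqref{resp}; note $K_l = {\rm O}(B_l)$ for $l\nmid\det B$ (where $B_l$ is unimodular). Arguing as in Examples~\ref{gengenus}--\ref{gengenuslevel}, the class set ${\rm X}(K)$ is naturally identified with the groupoid of pairs $(B',\alpha)$, with $B'$ in the genus of $B$ and $\alpha:{\rm res}\,B\isomo{\rm res}\,B'$ an isomorphism, modulo isometries $h:B'\to B''$ with ${\rm res}\,h\circ\alpha = \alpha'$; the iso $\alpha$ is automatically quadratic in the even case, since every element of ${\rm O}(B_l)$ preserves ${\rm q}_l$ on the residue. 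Post-composition with the fixed $\sigma$ gives an equivalence of ${\rm X}(K)$ onto the subgroupoid of $\mathcal{H}_m^\tau(A)$ whose underlying lattice lies in the genus of $B$; Nikulin's results on discriminant forms identify this subgroupoid with $\mathcal{H}_m^\tau(A)$ itself, so ${\rm m}_K = {\rm mass}\,\mathcal{H}_m^\tau(A) = {\rm mass}\,\mathcal{G}_m^\tau(A)$ via Lemma~\ref{eqfunct}.

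\emph{Step 4 (applying Theorem~\ref{thmmain}).} Set $x := [(B,{\rm id})] \in {\rm X}(K)$ and, writing $H(L',e')=(B^*,\sigma^*)$, set $y := [(B^*,\sigma^*\circ\sigma^{-1})] \in {\rm X}(K)$. By Steps~1--3, ${\rm N}_p^A(L,L',e') = {\rm N}_p(x,y)$. The hypothesis that the inertial genus of $B$ is a single spinor genus translates precisely into $|{\rm S}(K)| = 1$, making the residual spinor conditions in Theorem~\ref{thmmain} vacuous. Applying Theorem~\ref{thmmain} (legitimate as ${\rm rank}\,B \geq 3$), together with $|\Gamma_y| = |\{h\in{\rm O}(B^*):{\rm res}\,h={\rm id}\}| = |{\rm Aut}(B^*,\sigma^*)| = |{\rm Aut}(L',e')|$ from Lemma~\ref{eqfunct}, yields the desired asymptotic; the improvement from $1/\sqrt{p}$ to $1/p$ when ${\rm rank}\,B\geq 5$ is inherited directly.

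The main obstacle is \emph{Step 3}: checking carefully that ${\rm X}(K)$ matches $\mathcal{H}_m^\tau(A)$, in particular reconciling the bilinear vs.~quadratic residue structures in the even case, and verifying that the ``single spinor genus of the inertial genus'' condition precisely yields $|{\rm S}(K)|=1$ for this $K$.
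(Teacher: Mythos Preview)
Your proposal is correct and follows essentially the same route as the paper. Your Steps~1--2 are the content of Lemma~\ref{idenei}, your Step~3 is Lemma~\ref{equivphi} (the equivalence $\Phi:[{\rm Gen}(K)/{\rm O}(W)]\to\mathcal{H}_m^\tau(A)$, proved via Nikulin's Cor.~1.16.3 for the genus part and Thm.~1.9.5/1.16.4 for the surjectivity of ${\rm res}$), and Step~4 is the final reduction to Theorem~\ref{thmmain}; the paper organizes these in the same order and with the same ingredients. Your flagged ``main obstacle'' is exactly where the paper does the real work: note that to get essential surjectivity in Step~3 you need \emph{both} Nikulin inputs --- the discriminant-form classification to put $B'$ in the genus of $B$, and the surjectivity of ${\rm O}(B_p)\to{\rm O}({\rm res}\,B_p)$ (onto quadratic isometries when $p=2$ and $B_2$ is even) to realize any prescribed $\alpha$ --- and your write-up conflates these slightly, but the argument is sound.
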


It remains to explain the condition on the ``inertial genus'' of $B$.
Consider the quadratic space $V=B \otimes \Q$ over $\Q$ and define a compact open subgroup $K \subset {\rm O}_V(\AAA)$ by  
$K \,=\, \prod_p K_p$ and $K_p={\rm I}(B_p)={\rm ker}\,( {\rm O}(B_p) \rightarrow {\rm O}({\rm res}\, B_p))$ (see \ref{sect:notations} \S (iv)).
We call ${\rm Gen}(K)$ the {\it inertial genus} of $B$.
The assumption on ${\rm Gen}(K)$ in Theorem~\ref{secmainthmodd} is $|\Sigma(K)|=1$. Recall ${\rm g}(X)$
denotes the minimal number of generators of the finite abelian group $X$.

\begin{lemma}\label{inertialspin} Let $C$ be an integral $\Z_p$-lattice of rank $r$ in a non-degenerate quadratic space over $\Q_p$, and set 
$g={\rm g}({\rm res}\, C)$. 
Assume $g \leq r-2$ if $p$ is odd or if $p=2$ and $C$ is even, and $g \leq r-3$ otherwise.
Then ${\rm sn}( {\rm I}(C)\cap {\rm SO}(C))$ contains the image of $\Z_p^\times$ in $\Q_p^\times/\Q_p^{\times,2}$, and we have $\det {\rm I}(C)=\{\pm 1\}$.
\end{lemma}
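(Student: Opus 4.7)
The plan is to find a unimodular $\Z_p$-sublattice $M \subset C$ of rank $\geq 2$ (in the paper's sense of \S\ref{prelimspin}) and then invoke Lemma~\ref{integralspin} for $M$. Any such $M$ automatically splits off orthogonally as $C = M \perp (M^\perp \cap C)$, by the standard duality argument: for $v \in C$, the functional $x \mapsto v \cdot x$ on $M$ is represented by a unique $m_v \in M$, and $v - m_v$ then lies in $C \cap M^\perp$. Since ${\rm res}\, M = 0$, this splitting induces a canonical identification ${\rm res}\, C \simeq {\rm res}(M^\perp \cap C)$, so the embedding ${\rm O}(M) \hookrightarrow {\rm O}(C)$, $\gamma \mapsto \gamma \oplus {\rm id}_{M^\perp \cap C}$, lands in ${\rm I}(C)$. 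Lemma~\ref{integralspin} gives surjectivity of ${\rm sn} \times \det\colon {\rm O}(M) \to \Z_p^\times/\Z_p^{\times,2} \times \{\pm 1\}$, and both assertions of the lemma follow at once.

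To produce $M$, take a Jordan decomposition $C = C_0 \perp C_{\geq 1}$ at $p$, with $C_0$ the bilinearly unimodular (scale-$1$) Jordan piece and $C_{\geq 1}$ the sum of strictly higher-scale pieces. A standard computation on Jordan residues (${\rm res}\, C_0 = 0$, while each $2^a$-modular piece with $a \geq 1$ of rank $n$ contributes $(\Z/2^a)^n$ to the residue) gives ${\rm rank}\, C_0 = r - g$, so the hypothesis reads ${\rm rank}\, C_0 \geq 2$ in the first two cases and $\geq 3$ in the last.

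When $p$ is odd, or when $p = 2$ and $C$ is even, $C_0$ is itself unimodular in the paper's sense (at odd $p$ bilinear integrality forces quadratic integrality; in the even $p=2$ case each Jordan piece inherits the evenness of $C$), so we may take $M = C_0$. The remaining case $p = 2$ with $C$ odd forces $C_0$ to be odd of rank $n_0 \geq 3$; decompose it as $C_0 = \langle u_1 \rangle \perp \cdots \perp \langle u_m \rangle \perp L$, where the $\langle u_i \rangle$ are the rank-$1$ odd direct summands and $L$ is even $1$-modular, necessarily of even rank (there are no rank-$1$ even $1$-modular $\Z_2$-lattices, as $\langle u \rangle$ with $u \in \Z_2^\times$ satisfies ${\rm q}(e) = u/2 \notin \Z_2$). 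If $m \geq 3$, take for $M$ the sublattice $\Z_2(e_1+e_2) + \Z_2(e_2+e_3)$: its bilinear Gram determinant $u_1 u_2 + u_1 u_3 + u_2 u_3$ is a sum of three odd units, hence a unit of $\Z_2$, and $(u_i + u_j)/2 \in \Z_2$ makes $M$ even, so $M$ is unimodular in the paper's sense. If $m \leq 2$, then $L$ has rank $n_0 - m \geq 2$ and thus contains an even unimodular rank-$2$ orthogonal summand ($H$ or $A$), which we take as $M$. This rank-$3$ diagonal construction is the only non-routine step, and it is precisely what forces the strengthened hypothesis $g \leq r - 3$ when $C$ is odd at $p = 2$.
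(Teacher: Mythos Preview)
Your proof is correct and follows the same overall strategy as the paper: split off the bilinearly unimodular Jordan piece $U=C_0$ (so ${\rm rank}\,U=r-g$), observe that ${\rm O}(U)\times 1$ lands in ${\rm I}(C)$, and then produce enough reflections or isometries in that factor. The only substantive difference is in the sub-case $p=2$ with $C_0$ odd and purely diagonal of rank $\geq 3$. There the paper argues directly with reflections, showing that $\mathcal{U}\mathcal{U}=\Z_2^\times$ for $\mathcal{U}=\{x\cdot x:x\in U\}\cap\Z_2^\times$, whereas you manufacture the even rank-$2$ sublattice $M=\Z_2(e_1+e_2)+\Z_2(e_2+e_3)$ and invoke Lemma~\ref{integralspin} once more. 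Your route has the virtue of reducing every case to a single application of Lemma~\ref{integralspin}; the paper's route avoids the auxiliary construction but requires the small ad hoc computation with $\mathcal{U}$. Both are short, and neither is clearly preferable.
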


\begin{pf} We may write $C = U \perp U'$ for some integral $\Z_p$-submodules $U$ and $U'$ satisfying
$\det U  \in \Z_p^\times$ and $U'.U' \subset p \Z_p$ (use e.g. \cite[Prop. 1.8.1]{nikulin}).
We have thus ${\rm res}\, U=0$, ${\rm res}\, C = {\rm res}\, U'$, ${\rm rank}\, U'=g$ and ${\rm rank}\, U = r-g \geq 2$.
The subgroup ${\rm O}(U) \times 1 \subset {\rm O}(C)$ acts trivially on ${\rm res}\, C$.
As we have ${\rm rank}\, U \geq 1$, this proves $\det {\rm I}(C) \supset \det {\rm O}(U) = \{\pm 1\}$.
To conclude, it is enough to show $\Z_p^\times \Q_p^{\times,2} \subset {\rm sn}( {\rm SO}(U))$.
If $p$ is odd, or if $p=2$ and $U$ has a rank $2$ even (unimodular) summand, Lemma~\ref{integralspin}
implies ${\rm q}(U)\supset \Z_p^\times$, and we are done. In the remaining case, we have $p=2$ and $U \simeq \langle a\rangle \perp \langle b\rangle  \perp \langle c \rangle \perp D$ with $a,b,c \in \{\pm1, \pm 3\}$. A straightforward computation shows then $\mathcal{U}\mathcal{U}=\Z_2^\times$ where $\mathcal{U}=\{ x.x \,|\, x \in U\} \cap \Z_2^\times$, and we are done again.
\end{pf}

\begin{cor}\label{inequimpliesspin} Let $L$ be a unimodular lattice of rank $m$, $A$ a saturated 
subgroup of $L$ and set $B=L \cap A^\perp$. Assume either $B$ is even and
${\rm rank}\, A + {\rm g}({\rm res}\, A) \leq m-2$, or $B$ is odd and ${\rm rank}\, A + {\rm g}({\rm res}\, A) \leq m-3$.
Then the inertial genus of $B$ is a single spinor genus and ${\rm rank}\, B \geq 3$.
\end{cor}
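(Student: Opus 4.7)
The plan is to chain together three results already established in the paper: Lemma \ref{bijlemmalag} (the unimodular glueing dictionary), Lemma \ref{inertialspin} (local surjectivity of spinor/determinant under a rank-versus-generators inequality), and Corollary \ref{finitenessSK} (triviality of $\Sigma(K)$ when the set $T$ in the preceding discussion can be taken empty).

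The rank bound is handled first, by direct case analysis. Set $r = {\rm rank}\, B = m - {\rm rank}\, A$ and $g = {\rm g}({\rm res}\, A)$. In the odd case one has $r \geq 3 + g \geq 3$ immediately. In the even case $r \geq 2 + g$, which gives $r \geq 3$ unless $g = 0$; but $g = 0$ forces $A$ to be unimodular and hence $L = A \perp B$ with $B$ even unimodular positive definite, so ${\rm rank}\, B$ is a positive multiple of $8$ (the hypothesis $r \leq m-2 - {\rm rank}\, A$ rules out $r = 0$).

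For the spinor genus assertion, set $V = B \otimes \Q$ and $K = \prod_p {\rm I}(B_p) \subset {\rm O}_V(\AAA_f)$, so that the inertial genus is ${\rm Gen}(K)$ and spinor genera are indexed by the dual of $\Sigma(K)$. I propose to verify the hypothesis of Corollary \ref{finitenessSK} with $T = \emptyset$ and $K'_p = {\rm I}(B_p)$ for every prime $p$. This in turn reduces to checking, prime by prime, that $\det {\rm I}(B_p) = \{\pm 1\}$ and $\Z_p^\times \subset {\rm sn}({\rm I}(B_p) \cap {\rm SO}(V_p))$, which is exactly the conclusion of Lemma \ref{inertialspin} applied to $C = B_p$.

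The one nontrivial link is to check Lemma \ref{inertialspin}'s numerical hypothesis at each $p$. Here I will use that $A$ is saturated in the unimodular lattice $L$, so by Lemma \ref{bijlemmalag} the Lagrangian $L/(A \perp B)$, being transversal to ${\rm res}\, A$, yields an anti-isometry ${\rm res}\, A \isomo -{\rm res}\, B$ of bilinear forms; in particular ${\rm g}({\rm res}\, B_p) \leq {\rm g}({\rm res}\, A)$ for every prime $p$. Together with the hypothesis of the corollary, and the fact that $B$ and $B_2$ have the same parity (evenness being a local condition at $2$), this gives ${\rm g}({\rm res}\, B_p) \leq r - 2$ when $p$ is odd or $(p,B) = (2, \text{even})$, and ${\rm g}({\rm res}\, B_2) \leq r - 3$ when $p = 2$ and $B$ is odd — precisely what Lemma \ref{inertialspin} demands. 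No step presents a serious obstacle; the argument is essentially a bookkeeping exercise once the glueing dictionary relating ${\rm res}\, A$ and ${\rm res}\, B$ is invoked.
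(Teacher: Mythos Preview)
Your proof is correct and follows essentially the same approach as the paper: invoke the glueing isomorphism ${\rm res}\, A \simeq {\rm res}\, B$ from Lemma~\ref{bijlemmalag}, feed the resulting bound ${\rm g}({\rm res}\, B_p) \leq {\rm g}({\rm res}\, A)$ into Lemma~\ref{inertialspin} at every prime, and conclude $\Sigma(K)=1$ via Corollary~\ref{finitenessSK} with $T=\emptyset$; the rank bound is handled by the same ``no even unimodular lattice of rank $2$'' observation. (There is a harmless typo in your parenthetical: you wrote ``$r \leq m-2-{\rm rank}\,A$'' where you meant the hypothesis gives $r \geq 2$, ruling out $r=0$.)
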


\begin{pf} We have a group isomorphism ${\rm res}\, A \simeq {\rm res}\, B$ by Lemma~\ref{bijlemmalag}.
The assertion $|\Sigma(K)|=1$ follows then from Lemma~\ref{inertialspin} and the second assertion of Lemma~\ref{finitenessSK}, using $K'_p=K_p$ for all $p$ (and $T=\emptyset$). The trivial inequality 
${\rm g}({\rm res}\, B) \geq 0$
shows  ${\rm rank}\, B \geq 3$, or $B$ is even, ${\rm rank}\, B=2$ and ${\rm res}\, B=0$.
But there is no even unimodular lattice of rank $2$.
\end{pf}

\begin{pf}  (Theorem~\ref{secmainthmodd} implies Theorem~\ref{secmainthmintro})
As $L$ is even, we have $\mathcal{G}_m^\tau(A)=\mathcal{G}_m^{\rm even}(A)$ by Lemma~\ref{exerciseparity}.
The mass of this groupoid is ${\rm m}_n^{\rm even}(A)$ (Lemma~\ref{masslemma}).
We conclude by Corollary~\ref{inequimpliesspin} and Formulas~\ref{orbitstabl} \& \ref{decompGmA}.
\end{pf}

\subsection{Proof of Theorem \ref{secmainthmodd}}

Fix $L$, $A$, $B$ as in the statement, and set $W=B \otimes \Q$ and $K=\prod_p K_p$ with $K_p={\rm I}(B_p)$ as above.
We have $H(L,{\rm e}_L)=(B,\sigma)$ for a unique $\sigma \in {\rm Isom}(-{\rm res}\, A, {\rm res}\, B)$.\ps

For $g \in {\rm O}_W(\AAA_f)$, we have a lattice $g(B) \in {\rm Gen}_\Q(B)$ (see Example \S \ref{gengenus}), as well as an element 
$\sigma_g \in {\rm Isom}(-{\rm res}\, A, {\rm res}\, g(B))$ defined by $\sigma_g\, =\, {\rm res}\, g \,\circ \,\sigma$.
The meaning of ${\rm res}\, g$ here uses the canonical decomposition of ${\rm res}\, B$ as an orthogonal sum of all the ${\rm res}\, B_p$ (see \S \ref{sect:notations} (iv)). The pair $(g(B),\sigma_g)$ only depends on $gK \in {\rm Gen}(K)$ by definition of $K$, and defines an object in $\mathcal{H}_m(A)$.
We have $\sigma_g \,\circ\, \sigma^{-1} \,=\, {\rm res}\, g: {\rm res}\, B \rightarrow {\rm res}\, g(B)$ so $(g(B),\sigma_g)$ has the same type as $(B,\sigma)$. Also, any $\gamma \in {\rm O}(W)$ trivially induces for all $g \in {\rm O}_W(\AAA_f)$ a unique morphism $(g(B),\sigma_g) \rightarrow (\gamma g (B),\sigma_{\gamma g})$. We have thus defined a functor
\begin{equation}\label{functorphi} \Phi : [{\rm Gen}(K)/{\rm O}(W)] \rightarrow \mathcal{H}_m^{\tau}(A), \, \, \, {\rm with}\,\, \, \tau:=(B,\sigma).\end{equation}

\begin{lemma} \label{equivphi} The functor \eqref{functorphi} is an equivalence of groupoids. 
\end{lemma}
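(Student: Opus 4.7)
\emph{Plan.} I would verify separately that $\Phi$ is fully faithful and essentially surjective.

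\emph{Fully faithfulness.} Fix $g_1, g_2 \in {\rm O}_W(\AAA_f)$. A morphism $g_1 K \to g_2 K$ in $[{\rm Gen}(K)/{\rm O}(W)]$ is a $\gamma \in {\rm O}(W)$ satisfying $\gamma g_1 K = g_2 K$, equivalently $\gamma(g_1(B)) = g_2(B)$ as lattices in $W$ together with $g_2^{-1}\gamma g_1 \in K = \prod_p {\rm I}(B_p)$. Such a $\gamma$ restricts to a lattice isometry $h := \gamma|_{g_1(B)} : g_1(B) \to g_2(B)$. The residue of the adelic element $g_2^{-1}\gamma g_1 \in \prod_p {\rm O}(B_p)$ decomposes as ${\rm res}(g_2^{-1}\gamma g_1) = ({\rm res}\, g_2)^{-1} \circ {\rm res}\, h \circ {\rm res}\, g_1$, so the condition $g_2^{-1}\gamma g_1 \in K$ reads ${\rm res}\, h \circ {\rm res}\, g_1 = {\rm res}\, g_2$, which after composing with $\sigma$ becomes ${\rm res}\, h \circ \sigma_{g_1} = \sigma_{g_2}$. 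This is precisely the condition for $h$ to be a morphism $\Phi(g_1 K) \to \Phi(g_2 K)$ in $\mathcal{H}_m^\tau(A)$. Conversely, since $g_1(B)$ spans $W$ over $\Q$, any such $h$ extends uniquely $\Q$-linearly to an element $\gamma \in {\rm O}(W)$, and the computation above reverses. These correspondences are mutually inverse.

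\emph{Essential surjectivity.} Given $(B', \sigma') \in \mathcal{H}_m^\tau(A)$, set $(L', e') := G(B', \sigma')$ via the equivalence of Lemma~\ref{eqfunct}. By Lemma~\ref{exerciseparity}, $L'$ is a rank-$m$ unimodular lattice of the same parity as $L$, with $e'(A)$ saturated in $L'$. The classical classification of positive-definite unimodular $\Z_p$-lattices (unique up to isometry in each rank and parity) gives $L_p \simeq L'_p$ for every prime $p$; Witt's extension theorem over $\Z_p$ then produces an isometry $\alpha_p : L_p \to L'_p$ satisfying $\alpha_p \circ (e_L)_p = (e_{L'})_p$ on $A_p$. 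Its restriction $\beta_p := \alpha_p|_{B_p} : B_p \to B'_p$ is a lattice isometry, and unwinding the glueings $L_p = L(\sigma_p)$, $L'_p = L(\sigma'_p)$ of Lemma~\ref{bijlemmalag} forces ${\rm res}\, \beta_p \circ \sigma_p = \sigma'_p$. Hence $B$ and $B'$ are everywhere locally isometric, so Hasse-Minkowski gives $B' \otimes \Q \simeq W$; fixing such an identification realizes $B' \subset W$, and the $(\beta_p)_p$ assemble into an element $\beta \in {\rm O}_W(\AAA_f)$ (with $\beta_p = {\rm id}$ at the almost all primes where $B_p = B'_p$). By construction $\beta(B) = B'$ and $\sigma_\beta = \sigma'$, so $\Phi(\beta K) = (B', \sigma')$.

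\emph{Main obstacle.} The delicate ingredient is the local Witt extension theorem over $\Z_p$, especially at $p = 2$ in the odd case; the unimodularity of $L$ is essential here, ensuring that an isometry between the saturated $\Z_p$-sublattices $e_L(A)_p$ and $e_{L'}(A)_p$ of the ambient unimodular $\Z_p$-lattices can always be extended. Once this local step is secured, Lemma~\ref{bijlemmalag} automatically propagates the glueing data, and the remaining global assembly is routine via Hasse-Minkowski.
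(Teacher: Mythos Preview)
Your full faithfulness argument is correct and more explicit than the paper's (which simply calls it ``trivial'').

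For essential surjectivity your route is genuinely different: you pass to the ambient unimodular lattices $L,L'$ and invoke a $\Z_p$-Witt extension theorem, whereas the paper works directly with $B,B'$. The paper first shows that $B$ and $B'$ lie in the same genus using Nikulin's result \cite[Cor.~1.16.3]{nikulin} that parity together with the isometry class of the residue form determines the genus of a definite lattice; it then uses the surjectivity of ${\rm O}(B_p)\to{\rm O}({\rm res}\,B_p)$ (onto the full group for $p$ odd or $B_2$ odd, onto the quadratic isometries for $p=2$ and $B_2$ even, by \cite[Thms.~1.9.5 \& 1.16.4]{nikulin}) to modify $g$ by an element of $\prod_p{\rm O}(B_p)$ so that $\sigma_g=\sigma'$.

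The gap in your argument is precisely the Witt extension step. The statement you need---that any two saturated embeddings of $A_p$ into a fixed unimodular $\Z_p$-lattice are ${\rm O}(L_p)$-equivalent---is \emph{false} at $p=2$ without further input. Take $A={\rm I}_1$ and $L_2\simeq{\rm I}_9\otimes\Z_2$: one saturated embedding has odd complement ${\rm I}_8\otimes\Z_2$, another (coming from $({\rm I}_1\oplus{\rm E}_8)\otimes\Z_2\simeq{\rm I}_9\otimes\Z_2$) has even complement ${\rm E}_8\otimes\Z_2$, and these are not isometric over $\Z_2$; compare Remark~7.6 of the paper. What saves you is the hypothesis that $(B,\sigma)$ and $(B',\sigma')$ have the same parity, so $B_2$ and $B'_2$ do too; but to pass from ``same rank, same parity, isometric residues'' to ``$B_p\simeq B'_p$'' you need exactly Nikulin's Cor.~1.16.3. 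And even after securing $B_p\simeq B'_p$, producing $\alpha_p:L_p\to L'_p$ with $\alpha_p|_{A_p}={\rm id}$ \emph{and} the correct induced map on residues requires realizing $\sigma'_p\circ\sigma_p^{-1}$ as ${\rm res}\,\beta_p$ for some $\beta_p\in{\rm O}(B_p)$---that is, the surjectivity statements in Nikulin's Thms.~1.9.5 and 1.16.4. So your ``Witt extension over $\Z_p$'' is not an independent shortcut: when unpacked it is a repackaging of the two Nikulin inputs, and you should cite those rather than a generic Witt theorem. A minor secondary point: your $(\beta_p)_p$ is not automatically the identity at almost all primes from the construction you give; you must redefine it to be ${\rm id}$ at primes with ${\rm res}\,B_p=0$ (and $B_p=B'_p$ under your chosen embedding), which is harmless since the residue condition is vacuous there.
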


\begin{pf} The full faithfulness of $\Phi$ is trivial, so we focus on its essential surjectivity. 
Fix an object $(B',\sigma')$ in $\mathcal{H}_m^\tau(A)$.
By definition, the positive definite lattices $B$ and $B'$ have the same parity, 
the same determinant $|{\rm res}\, A|$, isometric 
bilinear residues (and even isometric quadratic residues if they are even, by the parity condition). 
By \cite[Cor. 1.16.3]{nikulin}, this implies that they are in the same genus.
By Hasse-Minkowski, we may thus assume $B' \subset W=B \otimes \Q$, 
and then $B'=g(B)$ for some $g \in {\rm O}_W(\AAA)$. 
It is enough to show that up to replacing $g$ by $gk$ for some $k $ in the stabilizer 
$\prod_p {\rm O}(L_p)$ of $B$ in ${\rm O}_W(\AAA)$, we may achieve $\sigma_g=\sigma'$.
As $\sigma_{gk} \circ \sigma^{-1}= {\rm res}\, g \circ {\rm res}\, k$,  
we conclude by Theorems 1.9.5 and 1.16.4 in \cite{nikulin}: the morphism 
${\rm res}: {\rm O}(B_p) \rightarrow {\rm O}({\rm res}\, B_p)$ is surjective for $p$ odd 
or for $p=2$ and $B_2$ odd, and its image is the whole subgroup of quadratic isometries for $p=2$ and $B_2$ even.
\end{pf}

As we have just seen, ${\rm Gen}(K)$ is naturally identified with the set of all $(B',\sigma')$ with $B' \subset W$ in the genus of $B$,
and $(B',\sigma')$ an object in $\mathcal{H}_m^\tau(A)$. Let $S$ be the set of odd primes not dividing $\det A=\det B$.
For $p \notin S$, and according to this identification and Definition \ref{defpneighbgen}, the $p$-neighbors of 
$(B,\sigma)$ are the $(B', \sigma')$ where $B'$ is a $p$-neighbor of $B$ and $\sigma'$ is the composition of $\sigma$ and of the 
natural (``identity'') isomorphism ${\rm res}\, B \isomo {\rm res}\, B'$ deduced from the equality $B'[1/p]=B[1/p]$. 
In particular, $\sigma'$ is uniquely determined by $B'$, which in turn is uniquely determined by its line $l:=\ell(B')$
in ${\rm C}_B(\Z/p)$. We may thus write $B(l)$ the lattice $B'$, and $\sigma(l)$ the isometry $\sigma'$. On the other hand, the set ${\rm C}_B(\Z/p)$ is also the orthogonal 
of $A\otimes \Z/p$ in ${\rm C}_L(\Z/p)$ by \eqref{orthomodp}, and it parameterizes the $p$-neighbors of $L$ 
containing $A$. We denote by $L(l)$ this $p$-neighbor of $L$ defined by $l$. 
Theorem \ref{secmainthmodd} follows then from Theorem \ref{thmmain} applied to $W$ and $K$, 
Lemmas \ref{eqfunct} \& \ref{equivphi}, and Lemma \ref{idenei} below.

\begin{lemma} \label{idenei} Let $p$ be an odd prime not dividing $\det A$.  For any $l$ in ${\rm C}_B(\Z/p)$ we have $H(L(l),{\rm e}_{L(l)})=(B(l),\sigma(l))$.
\end{lemma}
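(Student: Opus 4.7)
The plan is to verify the two coordinates of $H(L(l),{\rm e}_{L(l)})$ separately: first I will show that the orthogonal complement of $A$ in $L(l)$ is $B(l)$, and then that the unique $\tau(l)\in {\rm Isom}(-{\rm res}\,A,{\rm res}\,B(l))$ with $L(l)=L(\tau(l))$ equals $\sigma(l)=\iota\circ\sigma$, where $\iota:{\rm res}\,B\isomo {\rm res}\,B(l)$ is the natural isomorphism. Throughout, the key input is that $p\nmid\det A=\det B$ implies that $A_p$ and $B_p$ are both unimodular, which gives the orthogonal decomposition $L\otimes\Z_p = A_p\perp B_p$. Using unimodularity of $B_p$ I can moreover lift $l$ to an element $x\in B$ with $x\cdot x\equiv 0 \bmod p^2$ and $\bar{x}$ generating $l$ (the argument of \S\ref{sect:pnei} carried out inside $B_p$), so that $L(l)=M+\Z\cdot x/p$ with $M=\{v\in L : v\cdot x \equiv 0 \bmod p\}$ and $B(l)=M_B+\Z\cdot x/p$ with $M_B=M\cap B=\{v\in B : v\cdot x\equiv 0 \bmod p\}$ are described by the same $x$.

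For the first coordinate, I will show $L(l)\cap A^\perp = B(l)$. The inclusion $\supset$ is immediate since $x\in B\subset A^\perp$ and $M_B\subset L\cap A^\perp=B$. Conversely, writing any $w\in L(l)\cap A^\perp$ as $m+k\,x/p$ with $m\in M$, $k\in\Z$, the element $m=w-k\,x/p$ lies in $L\cap A^\perp=B$ (using $x\in B$), hence in $M\cap B=M_B$, so $w\in B(l)$.

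For the second coordinate, I will prove the lattice identity $L(l)=L(\sigma(l))$ in $V$; by Lemma \ref{bijlemmalag} and uniqueness of the gluing datum, this will force $\tau(l)=\sigma(l)$. The verification is local. At every $q\neq p$, one has $L(l)_q=L_q$ since $L(l)$ is a $p$-neighbor of $L$, and $L(\sigma(l))_q$ coincides with $L(\sigma)_q=L_q$ because $B(l)_q=B_q$ and the natural identification $\iota$ is the identity on ${\rm res}\,B_q$, hence $\sigma(l)_q=\sigma_q$. At $q=p$, the unimodularity of $A_p$ and $B_p$ forces ${\rm res}\,A_p=0={\rm res}\,B_p$, so the gluing maps $\sigma_p$ and $\sigma(l)_p$ are both trivial and $L(\sigma(l))_p=A_p+B(l)_p$; on the other hand, the decomposition $L_p=A_p\perp B_p$ together with $A\perp B$ yields $M_p=A_p+(M_B)_p$, whence
\[
L(l)_p = M_p + \Z_p\cdot x/p = A_p + (M_B)_p + \Z_p\cdot x/p = A_p + B(l)_p,
\]
matching $L(\sigma(l))_p$.

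I do not anticipate a real obstacle here: the whole argument is a bookkeeping exercise in localization, exploiting the orthogonal $p$-adic splitting $L_p=A_p\perp B_p$ granted by $p\nmid\det A$ and the explicit $p$-neighbor formulas of \S\ref{sect:pnei}. The only point requiring a little care is the existence of the common lift $x\in B$, but this is precisely the standard construction of a $p$-neighbor applied to the unimodular lattice $B_p$.
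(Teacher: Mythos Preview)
Your proof is correct. Both you and the paper verify the claim by localizing at each prime and exploiting the orthogonal splitting $L_p = A_p \perp B_p$ coming from $p \nmid \det A$, but the presentations differ slightly. The paper packages the argument via an adelic element $h = {\rm id}_{A \otimes \AAA_f} \perp g$ with $g \in {\rm O}_W(\AAA_f)$ sending $B$ to $B(l)$ and trivial away from $p$; once one checks $hL = L(l)$ (by the same local computation you do at $p$), the identity $\sigma' = {\rm res}\, g \circ \sigma = \sigma(l)$ drops out from ${\rm I}(\sigma') = hL/(A \perp gB)$. Your approach is more hands-on: you pick an explicit lift $x \in B$ of $l$, write out both $p$-neighbors via Formula~\eqref{voisdl}, and verify $L(l) \cap A^\perp = B(l)$ and $L(l)_q = L(\sigma(l))_q$ directly. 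This avoids the adelic language entirely and is arguably more self-contained, while the paper's version fits naturally with the adelic framework of \S\ref{classsets} and Lemma~\ref{equivphi}.
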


\begin{pf} Recall we have $H(L,{\rm e}_L)=(B,\sigma)$. 
Choose $g \in {\rm O}_W(\AAA_f)$ with $gB=B(l)$ and $g_q=1$ for $q \neq p$;
by definition we have ${\rm res}\, g \circ \sigma = \sigma(l)$. 
Set $V=L\otimes \Q$ and define $h \in {\rm O}_V(\AAA_f)$ by $h= {\rm id}_{A \otimes \AAA_f} \perp g$.
We have $hL \supset A$. We claim $hL=L(l)$. 
Indeed, $L(l)$ is the unique unimodular lattice $\neq L$ with $L(l)[1/p]=L[1/p]$
and $L(l)_p \supset {\rm M}_p(L_p,l)$. But we clearly have $(hL)[1/p]=L[1/p]$, and using 
$L_p=A_p\perp B_p$ and  ${\rm M}_p(L_p,l) = A_p \perp {\rm M}_p(B_p,l)$, 
we deduce $(hL)_p = A_p \perp B(l)_p$, so $hL \neq L$ and 
$(hL)_p \supset {\rm M}_p(L_p,l)$, which proves the claim. 
Define now $\sigma'$ by $H(L(l),{\rm e}_{L(l)})=(B(l),\sigma')$.
We have ${\rm I}(\sigma')=L(l)/(A \perp B(l))=hL/(A \perp gB)={\rm I}({\rm res}\,g \circ \sigma) $, and thus $\sigma'={\rm res}\,g \circ \sigma=\sigma(l)$.
%Using $B(l)[1/p]=B[1/p]$ we may and do
%identify ${\rm res}\, B(l)$ with ${\rm res}\, B$,
%but using then $L(l)[1/p]=L[1/p]$ it follows 
%that $\sigma'$ becomes equal to $\sigma$.
%But this is how we have defined $\sigma(l)$, so we also have $\sigma'=\sigma(l)$.
\end{pf}

% We have ${\rm I}(\sigma')=L(l)/(A+B(l))=gL/(A+gB)={\rm I}({\rm res}\,g \circ \sigma) $, and thus $\sigma'={\rm res}\,g \circ \sigma$. We conclude 
% ${\rm I}({\rm res}\,g \circ \sigma) = gL/(A+gB)=L(l)/(A+B(l))$, and thus
% $g\sigma=\sigma'$, and we conclude as $\sigma(l)={\rm res}\, g \circ \sigma$.

\appendix
\titleformat{\section}{\bfseries}{\appendixname~\thesection : }{0.5em}{}
\section[Arthur parameters for definite orthogonal groups]{Arthur parameters for automorphic representations of orthogonal groups}
\begin{center} Olivier Ta\"ibi \end{center}

\makeatletter
\renewcommand\thethmapp{\Alph{section}.\arabic{thmapp}}
\makeatother

\makeatletter
\renewcommand\thelemmapp{\Alph{section}.\arabic{lemmapp}}
\makeatother
\makeatletter
\renewcommand\thepropapp{\Alph{section}.\arabic{propapp}}
\makeatother
\makeatletter
\renewcommand\theremapp{\Alph{section}.\arabic{remapp}}
\makeatother
\makeatletter
\renewcommand\theequation{\Alph{section}.\arabic{equation}}
\@addtoreset{equation}{section}
\makeatother

We explain how to deduce Theorem \ref{thm:existence_psi} below from Arthur's
results in \cite{arthur} using the stabilization of the trace formula.
The proof is a piece of the argument in \cite{taibi_cpctmult}, save for
Proposition \ref{prop:Sdisc_discrete} (see Remark \ref{rem:diffcpctmult}), so we will
omit a few details.
We borrow some notations and definitions introduced in \S \ref{sect:notations}
and \S \ref{prelimspin}.\ps

Let $(V,q)$ be a quadratic space over $\Q$.
Assume that $V_\infty$ is definite.
Let $G = \mathrm{SO}_V$ be the associated special orthogonal group, considered
as an algebraic group over $\Q$.
Let $S$ be a finite set of places of $\Q$ containing the Archimedean place and
all the finite places $p$ such that $G_{\Qp}$ is ramified.
Let $m \in \Z_{\geq 1}$ be the product of all prime numbers in $S$.
There exists a reductive model $\underline{G}$ of $G$ over $\Z[m^{-1}]$.
Such a model is concretely obtained as follows.
We may choose a $\Z$-lattice $L$ in $V$ such that for any prime number $p$ not
in $S$ there exists $n \in \Z$ such that the lattice $(L_p, p^n q)$ is
unimodular if $p>2$ or $\dim V$ is even, or integral with determinant in $2
\Z_2^\times$ if $p=2$ and $\dim V$ is odd.
The subgroup scheme $G \subset \mathrm{O}_{L[1/m]}$ defined as the kernel of the
determinant for $\dim V$ odd, and as the kernel of the Dickson determinant
otherwise, is a reductive model $\underline{G}$ of $G$ over $\Z[m^{-1}]$
satisfying $\underline{G}(\Zp) = \mathrm{SO}(L_p)$ for any $p\not\in S$.
An irreducible representation $\pi = \bigotimes'_v \pi_v$  of $G(\AAA)$ is said
to be unramified away from $S$, if we have $\pi_p^{\underline{G}(\Zp)} \neq 0$
for all primes $p \notin S$, for some choice of reductive model $\underline{G}$
as above.\ps

If $\dim V$ is even choose for every place $v$ of $\Q$ an element $\delta_v$ of
$\mathrm{O}(V_v)$ having determinant $-1$ and order $2$, which belongs to
$\mathrm{O}(L_v)$ if $v$ does not belong to $S$.
Let then $\theta_v$ be the involutive automorphism of $G_{\Q_v}$ defined as
conjugation by $\delta_v$.
To treat both cases uniformly we simply let $\theta_v$ be the identity
automorphism of $G_{\Q_v}$ if $\dim V$ is odd. \ps

Recall that the Langlands dual group ${}^L G$ is by definition a semi-direct
product $\widehat{G} \rtimes \mathrm{W}_\Q$ where $\widehat{G}$ is a connected
reductive group over $\C$ and $\mathrm{W}_\Q$ is the Weil group of $\Q$.
Also recall that $G$ can be realized as a pure inner form of a quasi-split group
$G^*$, also realized as a special orthogonal group, and that we have a canonical
identification between ${}^L G$ and ${}^L G^*$.
The automorphisms $\widehat{\theta_v}$ of $\widehat{G} \rtimes
\mathrm{W}_{\Q_v}$ are induced by the same automorphism $\widehat{\theta}$ of
${}^L G$. \ps

Recall from \cite[\S 1.4]{arthur} that Arthur introduced a set
$\widetilde{\Psi}(G^*)$ of substitutes for global Arthur-Langlands parameters for
$G^*$, and define $\widetilde{\Psi}(G)=\widetilde{\Psi}(G^*)$.
Such a substitute is a formal (unordered) sum
\[ \psi = \bigoplus_{i \in I_\psi} \psi_i^{\oplus \ell_i} \oplus \bigoplus_{j
\in J_\psi} \left( \psi_j^{\oplus \ell_j} \oplus (\psi_j^\vee)^{\oplus \ell_j}
\right). \]
In this expression each $\psi_i = (\pi_i, d_i)$ is a pair where $\pi_i$ is a
unitary cuspidal automorphic representation of a general linear group
(considered up to isomorphism) and $d_i \geq 1$ is an integer.
We think of such a pair as the tensor product of the putative global Langlands
parameter of $\pi_i$ tensored with the irreducible algebraic representation of
$\SL_2$ of dimension $d_i$, and we simply denote $\psi_i =\pi_i[d_i]$.
For $i \in I_\psi \sqcup J_\psi$ we denote $\psi_i^\vee = \pi_i^\vee[d_i]$ where
$\pi_i^\vee$ is the contragredient representation of $\pi_i$.
We have $\psi_i^\vee = \psi_i$ if and only if $i \in I_\psi$, $\ell_i \geq 1$
for all $i \in I_\psi \sqcup J_\psi$, and the factors $(\psi_i)_{i \in I_\psi}$,
$(\psi_j)_{j \in J_\psi}$ and $(\psi_j^\vee)_{j \in J_\psi}$ are all
distinct.\ps

To any $\psi$ in $\widetilde{\Psi}(G)$ Arthur associated an extension
$\mathcal{L}_\psi$ of the Weil group $\mathrm{W}_{\Q}$ of $\Q$ by a product over
$i \in I_\psi \sqcup J_\psi$ of symplectic or special orthogonal groups (for $i
\in I_\psi$) and general linear groups (for $i \in J_\psi$) over $\C$.
Fix a standard representation $\mathrm{Std}_G$ of ${}^L G$ as in \cite[\S
2.1]{taibi_cpctmult}.
Arthur observed that we have a natural $\widehat{G}$-conjugacy class of
parameters $\widetilde{\psi}_G: \mathcal{L}_\psi \times \SL_2 \to {}^L G$
characterized by its composition with $\mathrm{Std}_G$, up to outer automorphism
in the even orthogonal case.
A parameter $\psi \in \widetilde{\Psi}(G)$ is called discrete if the centralizer
$S_\psi$ of $\widetilde{\psi}_G$ in $\widehat{G}$ is finite modulo
$Z(\widehat{G})^{\mathrm{Gal}(\overline{\Q}/\Q)}$.
It is elementary to check that $\psi$ is discrete if and only if one of the
following conditions is satisfied:
\begin{itemize}
\item
  the group $G$ is a split special orthogonal group in dimension $2$, i.e.\ $G$
  is isomorphic to $\mathrm{GL}_1$, or
\item
  the index set $J_\psi$ is empty and for any $i \in I_\psi$ we have $\ell_i =
  1$.
\end{itemize}
Denote $\widetilde{\Psi}_\mathrm{disc}(G) \subset \widetilde{\Psi}(G)$ the
subset of discrete parameters. \ps

For every $\psi \in \widetilde{\Psi}(G)$ and every place $v$ of $\Q$ we have,
thanks to the local Langlands correspondance for general linear groups, a
parameter $\mathrm{WD}_{\Q_v} \rightarrow \mathcal{L}_\psi$, where
$\mathrm{WD}_{\Q_v}$ denotes the Weil-Deligne group of $\Q_v$, which is
well-defined up to conjugation by $\mathcal{L}_\psi$ and outer conjugation on
the even orthogonal factors of $\mathcal{L}_\psi$.
Composing with $\widetilde{\psi}_G$ gives the localization $\psi_v:
\mathrm{WD}_{\Q_v} \times \SL_2 \rightarrow {}^L G$ of $\psi$ at $v$, again
uniquely determined up to conjugation by $\widehat{G} \rtimes
\{1,\widehat{\theta}\}$.
In particular for $v=\infty$ we associate to $\psi \in \widetilde{\Psi}(G)$ an
``infinitesimal character'' $\widetilde{\nu}(\psi)$ which is a
$\{1,\widehat{\theta}\}$-orbit of $\widehat{G}$-conjugacy classes of semisimple
elements in the Lie algebra of $\widehat{G}$, again determined by its image
under the standard representation.
Similarly, for any prime number $p$ such that $\psi$ is unramified at $p$ the
group $G_{\Qp}$ is an inner form of an unramified group and we have an
associated $\{1,\widehat{\theta}\}$-orbit $c_p(\psi)$ of $\widehat{G}$-conjugacy
classes of semisimple elements in $\widehat{G} \rtimes \mathrm{Frob}_p$.\ps

\begin{thmapp} \label{thm:existence_psi}
  As above let $G$ be a special orthogonal group over $\Q$ such that $G(\R)$ is
  compact.
  Let $S$ be a finite set of places of $\Q$ containing the Archimedean place and
  such that for any prime number $p \not\in S$ the group $G_{\Qp}$ is
  unramified, and let $\pi = \bigotimes'_v \pi_v$ be an automorphic
  representation of $G(\AAA)$ which is unramified away from $S$.
  There exists $\psi \in \widetilde{\Psi}_\mathrm{disc}(G)$ such that the
  following two conditions are satisfied.
  \begin{itemize}
  \item
    The infinitesimal character of $\pi_\infty$ belongs to $\widetilde{\nu}(\psi)$.
  \item
    For any prime number $p \not\in S$ the Satake parameter of $\pi_p$ belongs
    to $c_p(\psi)$.
  \end{itemize}
\end{thmapp}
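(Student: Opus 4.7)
The plan is to deduce this from Arthur's endoscopic classification \cite{arthur} of automorphic representations of the quasi-split inner form $G^*$, via the stabilization of the discrete part of the trace formula for $G$, closely following the strategy of \cite{taibi_cpctmult}. Since $V$ is positive definite at infinity, it is anisotropic over $\Q$, so $G$ is $\Q$-anisotropic, $G(\Q)\backslash G(\AAA)$ is compact, and $\pi$ automatically contributes to $L^2_{\mathrm{disc}}(G(\Q)\backslash G(\AAA))$, hence to the spectral side of the discrete trace formula $I^G_{\mathrm{disc}}$.

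To isolate $\pi$, I would test against functions of the form $f = f_S \cdot \prod_{p \notin S}\mathbf{1}_{\underline{G}(\Z_p)}$ with $f_S \in \mathcal{H}(G(\Q_S))$ varying, and decompose the spectral side of $I^G_{\mathrm{disc}}(f)$ as a sum indexed by the global Hecke data $c = (\widetilde{\nu},(c_p)_{p\notin S})$; linear independence of characters of representations of $G(\Q_S)$ (as $f_S$ varies) reduces the problem to matching $c(\pi)$ with some parameter. Now I would stabilize, writing
\[ I^G_{\mathrm{disc}}(f) \,=\, \sum_{(H,\ldots)} \iota(G,H)\,\widehat{S}^H_{\mathrm{disc}}(f^H), \]
where $(H,\ldots)$ runs over isomorphism classes of elliptic endoscopic data for $(G,\widehat{\theta})$ and $f^H$ is a Langlands-Shelstad-Kottwitz transfer of $f$ (this stabilization is standard and is carried out in the present setting in \cite{taibi_cpctmult}). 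By Arthur's main theorem \cite{arthur}, for each quasi-split endoscopic $H$ the stable distribution $\widehat{S}^H_{\mathrm{disc}}$ admits an explicit spectral decomposition indexed by parameters $\psi_H \in \widetilde{\Psi}_{\mathrm{disc}}(H)$, each summand carrying the local data $(\widetilde{\nu}(\psi_H),(c_p(\psi_H))_{p\notin S})$; transporting $\psi_H$ through the embedding $\widehat{H}\hookrightarrow \widehat{G}$ dual to the endoscopic datum produces a parameter $\psi \in \widetilde{\Psi}(G)$ (a priori not discrete) with matching local data. Identifying coefficients in the resulting character identity and using that $\pi$ contributes nontrivially to the $c(\pi)$-component of $I^G_{\mathrm{disc}}$, one produces at least one such $\psi \in \widetilde{\Psi}(G)$ satisfying the two conditions of the theorem.

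The main obstacle, and essentially the only new ingredient compared with \cite{taibi_cpctmult}, is to upgrade this $\psi$ to a \emph{discrete} parameter. The strategy is to analyze the archimedean component of $\widetilde{\psi}_G$: because $G(\R)$ is compact, any local Arthur packet at infinity contributing to the discrete spectrum must contain a finite-dimensional unitary representation of $G(\R)$, whose infinitesimal character is integral, regular, and dominant. A direct case analysis of the possible constituents $\pi_i[d_i]$ of a non-discrete parameter $\psi$ then excludes all such parameters: either some $d_i \geq 2$, in which case the half-integral shifts $\frac{d_i-1}{2},\frac{d_i-3}{2},\ldots$ appearing in $\widetilde{\nu}(\psi)$ force non-integrality or clashes in the infinitesimal character, or some cuspidal factor appears with multiplicity $\ell_i \geq 2$ or together with its dual, in which case the archimedean packet cannot support a finite-dimensional unitary representation with regular infinitesimal character. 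This forces $\psi \in \widetilde{\Psi}_{\mathrm{disc}}(G)$ and completes the proof.
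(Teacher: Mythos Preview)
Your overall framework---stabilize the trace formula for $G$, expand each stable term using Arthur's results for the quasi-split endoscopic groups, and read off a parameter matching $\pi$---is correct and is exactly what the paper does. The gap is entirely in your discreteness step.

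First, a concrete error: having some $d_i \geq 2$ is \emph{not} a feature of non-discrete parameters. A discrete parameter $\psi = \bigoplus_i \pi_i[d_i]$ with $J_\psi = \emptyset$ and all $\ell_i = 1$ may perfectly well have several $d_i \geq 2$ and still have regular integral infinitesimal character; the parameter attached to the trivial representation of $G$ is such an example. So the first branch of your dichotomy is simply wrong. The correct characterization (for $G$ not a one-dimensional torus) is that $\psi$ is non-discrete precisely when $J_\psi \neq \emptyset$ or some $\ell_i \geq 2$.

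Second, even restricting to this correct characterization, your assertion that regularity of the infinitesimal character rules out such $\psi$ is not justified. Regularity of $\widetilde{\nu}(\psi)$ only forces $S_\psi$ to lie inside a maximal torus of $\widehat{G}$; but for $\psi = \psi_j \oplus \psi_j^\vee$ with $\pi_j$ non-self-dual one has $S_\psi^0 \supset \GL_1$, which sits happily inside a torus. You would need an additional argument (for instance showing the local Archimedean parameter is Adams--Johnson, as in \cite{taibi_cpctmult}) to exclude this, and you do not supply one.

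The paper's route is different and does not attempt to prove that every $\psi$ with $\widetilde{\nu}(\psi)=\widetilde{\nu}$ is discrete. It uses Arthur's stable multiplicity formula \cite[Thm.~4.1.2]{arthur}, in which each $\psi' \in \widetilde{\Psi}(H)$ contributes with a factor $\sigma(\overline{S}_{\psi'}^0)$; this factor vanishes whenever $\overline{S}_{\psi'}^0$ has infinite centre, in particular whenever $S_{\psi'}^0$ contains a $\GL_1$ or $\mathrm{SO}_2$. Combining this non-vanishing with the regularity of $\nu$ (which forces $S_\psi$ into a maximal torus) shows that only those $\psi'$ with $\Xi(\psi')$ discrete actually contribute to the stabilized trace formula: this is the content of the Lemma and Proposition~\ref{prop:Sdisc_discrete}. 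That is the missing idea in your argument.
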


The rest of this appendix is devoted to the proof of this theorem.
For $p \not\in S$ endow $G(\Qp)$ with the Haar measure giving
$\underline{G}(\Zp)$ volume $1$.
Let $\mathrm{H}^S_\mathrm{unr}(G)$ be the unramified Hecke algebra (with complex
coefficients) for the group $G$ at the finite places not in $S$.
It is naturally a restricted tensor product, over these places $p$, of the
unramified Hecke algebras $\mathrm{H}(G(\Qp), \underline{G}(\Zp))$, which can be
identified with the algebras $\mathrm{H}'_{V_p} \otimes \C$ of \S
\ref{sect:pnei}.
Let $\widetilde{\mathrm{H}}^S_\mathrm{unr}(G)$ be the subalgebra consisting of
functions invariant under $\theta_p$ for any prime $p \not\in S$.
It is also a restricted tensor product, over these places $p$, of the algebras
\[ \widetilde{\mathrm{H}}(G(\Qp), \underline{G}(\Zp)) := \mathrm{H}(G(\Qp),
\underline{G}(\Zp))^{\theta_p}, \]
which can be identified with the algebras $\mathrm{H}_{V_p} \otimes \C$ of \S
\ref{sect:pnei}.
For $v \in S$ fix a Haar measure on $G(\Q_v)$ and let $\mathrm{H}(G(\Q_v))$ be
the convolution algebra of smooth compactly supported functions on $G(\Q_v)$
which are moreover bi-$G(\R)$-finite in the Archimedean case.
Let $\mathrm{H}_S(G) = \bigotimes_{v \in S} \mathrm{H}(G(\Q_v))$.
Let $\widetilde{\mathrm{H}}_S(G)$ be the subalgebra of functions invariant under
$\theta_v$ for any $v \in S$.
It is also the tensor product of the subalgebras
$\widetilde{\mathrm{H}}(G(\Q_v)) := \mathrm{H}(G(\Q_v))^{\theta_v}$. \ps

Let $\pi = \bigotimes'_v \pi_v$ be an automorphic representation for $G$ which
is unramified away from $S$.
In particular $\pi_\infty$ is a continuous irreducible finite-dimensional
representation of $G(\R)$.
Let $\nu$ be its infinitesimal character, seen as a semi\-simple conjugacy class
in the Lie algebra of the dual group $\widehat{G}$.
This conjugacy class is regular.
We will only use the orbit $\widetilde{\nu} = \{ \nu,
\widehat{\theta_\infty}(\nu) \}$ of $\nu$.
Let $\mathfrak{z}$ be the center of the enveloping algebra of the complex Lie
algebra $\C \otimes_{\R} \operatorname{Lie} G(\R)$.
Let $\mathcal{A}(G, \widetilde{\nu})$ be the space of automorphic forms for $G$
which is the sum of the eigenspaces for the action of $\mathfrak{z}$
corresponding to the elements of $\widetilde{\nu}$.
Let $f \in \mathrm{H}^S_\mathrm{unr}(G) \otimes \mathrm{H}_S(G)$.
We will not lose any generality by assuming that $f$ decomposes as a product
$\prod_v f_v$.
The stabilization of the trace formula for $G$ (due to Arthur in great
generality), refined by infinitesimal character, reads
\begin{equation} \label{eq:STF}
  \tr \left( f \,\middle|\, \mathcal{A}(G, \widetilde{\nu}) \right) =
  \sum_{\mathfrak{e} = (H, \mathcal{H}, s, \xi)} \iota(\mathfrak{e})
  \sum_{\substack{\nu' \\ \xi(\nu') \in \widetilde{\nu}}} S^H_{\mathrm{disc,
  \nu'}}(f^H)
\end{equation}
where the first sum is over equivalence classes of elliptic endoscopic data
$\mathfrak{e}$ which are unramified away from $S$ (there are only finitely many
such equivalence classes), and the second sum is over the set of
$\widehat{H}$-conjugacy classes $\nu'$ of semisimple elements of the Lie algebra
of $\widehat{H}$ mapping to an element of $\widetilde{\nu}$.\ps

The stable linear forms $S^H_{\mathrm{disc}, \nu'}$ were defined inductively by
Arthur.
We do not need to recall their precise definitions as we will use Arthur's
expansion for these stable linear forms below.
To recall how the element $f^H$ of the ad\'elic Hecke algebra of $H$ is defined
we first need to fix an embedding ${}^L \xi: {}^L H \to {}^L G$ extending $\xi:
\widehat{H} \to \widehat{G}$ as recalled in \cite[\S 2.3]{taibi_cpctmult}.
In the cases considered in this appendix this embedding satisfies a simple
compatibility property: the quasi-split connected reductive group $H$ naturally
decomposes as a product $H_1 \times H_2$ where each $H_i$ is of the same type as
$G$, and the composition of ${}^L \xi$ with the standard representation of ${}^L
G$ is simply the direct sum of the standard representations of the ${}^L H_i$.
(The order on the factors $H_1$ and $H_2$ is not uniquely determined, and it can
happen for one of the two factors to be trivial.)
In particular this choice of ${}^L \xi$ has a formal analogue
\begin{align*}
  \Xi: \widetilde{\Psi}(H) & \longrightarrow \widetilde{\Psi}(G) \\
  (\psi'_1, \psi'_2) & \longmapsto \psi'_1 \oplus \psi'_2.
\end{align*}
The function $f^H$ is only determined through its stable orbital
integrals\footnote{implicitly a Haar measure on $H(\AAA)$ is fixed here},
and is defined to be a transfer of $f$.
The notion of transfer is defined unambiguously in this global setting, but to
write $f^H$ as a product of functions $f^H_v$ over all places $v$ of $\Q$ we
need to fix normalizations of transfer factors at all places.
For any $p \not\in S$ the embedding ${}^L \xi$ is unramified at $p$ and so there
is a distinguished normalization of transfer factors for the localization of
$\mathfrak{e}$ at $p$, which depends on ${}^L \xi$ and on the choice of
hyperspecial compact open subgroup $\underline{G}(\Zp)$ of $G(\Qp)$.
At these places the function $f^H_p$ can be taken to be unramified, i.e.\
bi-invariant under a hyperspecial compact open subgroup $K_{H,p}$ of $H(\Qp)$.
In fact the fundamental lemma says that the map $\mathrm{H}(G(\Qp),
\underline{G}(\Zp)) \to \mathrm{H}(H(\Qp), K_{H,p})$ dual (via the Satake
isomorphisms for $G$ and $H$) to the embedding ${}^L \xi$ realizes transfer.
(This holds true for any choice of hyperspecial subgroup $K_{H,p}$.)
For the purpose of this appendix it is enough to choose the normalizations of
transfer factors at the places in $S$ in any way which makes the product of the
local transfer factors equal to the canonical global transfer factor.
This is possible because $S$ is not empty. \ps

We now make use of Arthur's expansion for the stable linear forms
$S_{\mathrm{disc},\nu'}^H$.
For this it is crucial to observe a property of transfer factors: as explained
in \cite[\S 3.2.3]{taibi_cpctmult} if $\dim V$ is even then it is evident on
Waldspurger's explicit formulas that they are invariant under the simultaneous
action of $\theta_v$ on $G$ and of an outer non-inner action on one of the
$H_i$'s
\footnote{For this invariance property the choice of a particular normalization
of transfer factors is irrelevant.}.
As a consequence of this invariance property, if we assume that $f = \prod_v
f_v$ belongs to $\widetilde{\mathrm{H}}^S_\mathrm{unr}(G) \otimes
\widetilde{\mathrm{H}}_S(G)$ then the stable orbital integrals of each $f^H_v$
are invariant under the outer automorphism group of each $H_i$.
From now on we make this assumption on $f$.
Let $\widetilde{\Psi}(H, \widetilde{\nu})$ be the set of $\psi' \in
\widetilde{\Psi}(H)$ such that $\widetilde{\nu}(\Xi(\psi')) =
\xi(\widetilde{\nu}(\psi'))$ is equal to $\widetilde{\nu}$.

Arthur proved a stable multiplicity formula which applies to the inner sum on
the right-hand side of \eqref{eq:STF}.
By \cite[Corollary 3.4.2]{arthur} and the vanishing assertion in Theorem 4.1.2
loc.\ cit.\ we have the following expansion for the inner sum in \eqref{eq:STF}:
\begin{equation} \label{eq:Sdisc_coarse_exp}
  \sum_{\substack{\nu' \\ \xi(\nu') \in \widetilde{\nu}}} S_{\mathrm{disc},
  \nu'}^H(f^H) = \sum_{\psi' \in \widetilde{\Psi}(H, \widetilde{\nu})}
  S_{\mathrm{disc}, \psi'}^H(f^H).
\end{equation}
On the right-hand side only finitely many $\psi'$ have non-vanishing
contribution (\cite[Lemma 3.3.1]{arthur}, see also \S X.5 and XI.6 in
\cite{SFTT2}).
Moreover Theorem 4.1.2 loc.\ cit.\ provides a formula for each term
$S_{\mathrm{disc}, \psi'}^H(f^H)$.
It turns out that a substantial simplification occurs thanks to the fact that
$\nu$ is regular.
We prove this simplification in two steps.

\begin{lemmapp}
  Let $G'$ be a quasi-split symplectic or special orthogonal group over a number
  field $F$.
  Let $\psi \in \widetilde{\Psi}(G')$.
  Assume that there exists a place $v$ of $F$ such that the localization
  $\psi_v: \mathrm{WD}_{F_v} \times \SL_2 \to {}^L G'$ is regular, in the sense
  that the neutral connected component of its centralizer in $\widehat{G'}$ is
  contained in a maximal torus.
  Assume moreover that the factor $\sigma(\overline{S}_\psi^0)$ appearing in
  \textup{\cite[Theorem 4.1.2]{arthur}} is non-zero.
  Then the parameter $\psi$ is discrete.
\end{lemmapp}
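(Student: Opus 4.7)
By definition, $\psi$ is discrete precisely when the quotient $\overline{S}_\psi = S_\psi/Z(\widehat{G'})^{\mathrm{Gal}(\overline{F}/F)}$ is finite, equivalently when its neutral component $\overline{S}_\psi^0$ is trivial. My plan is therefore to combine the regularity hypothesis at $v$, which will force $\overline{S}_\psi^0$ to be a complex torus, with a standard vanishing property of Arthur's function $\sigma$ on non-trivial tori.

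First, I would exploit the localization morphism. By construction, $\psi_v : \mathrm{WD}_{F_v} \times \SL_2 \to {}^L G'$ factors as the composition of a morphism $\mathrm{WD}_{F_v} \to \mathcal{L}_\psi$ with $\widetilde{\psi}_{G'} : \mathcal{L}_\psi \times \SL_2 \to {}^L G'$. Consequently any element of $\widehat{G'}$ that centralizes the image of $\widetilde{\psi}_{G'}$ also centralizes that of $\psi_v$, so we have an inclusion $S_\psi \subseteq S_{\psi_v}$ and thus $S_\psi^0 \subseteq S_{\psi_v}^0$. The regularity hypothesis says precisely that $S_{\psi_v}^0$ is contained in some maximal torus of $\widehat{G'}$, hence is a connected diagonalizable group, i.e.\ a torus. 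So $S_\psi^0$ is a torus, and the same is true of its image $\overline{S}_\psi^0$ in $\widehat{G'}/Z(\widehat{G'})^\Gamma$.

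Second, I would invoke the structure of $\sigma$ recalled in \cite[\S 4.1]{arthur}: for any connected complex reductive group $\overline{S}$, the quantity $\sigma(\overline{S})$ vanishes as soon as the connected center of $\overline{S}$ is non-trivial. In particular $\sigma$ vanishes identically on every non-trivial complex torus; this is one of the standard reductions used by Arthur to single out discrete parameters in the stable multiplicity formula. Applied to the torus $\overline{S}_\psi^0$, together with the hypothesis $\sigma(\overline{S}_\psi^0) \neq 0$, this forces $\overline{S}_\psi^0$ to be trivial, which is the desired discreteness of $\psi$.

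The main subtlety in writing this up rigorously lies in tracking Arthur's conventions: $\widetilde{\psi}_{G'}$ is only a $\widehat{G'}$-conjugacy class (and only determined up to $\widehat{\theta}$ in the even orthogonal case), and likewise for $\psi_v$, so the groups $S_\psi$ and $S_{\psi_v}$ are only canonically defined up to conjugation. This ambiguity is harmless for the inclusion $S_\psi^0 \subseteq S_{\psi_v}^0$ used above (representatives can be chosen compatibly), and the remaining nontrivial input is the vanishing of $\sigma$ on groups with non-trivial central torus, which is already built into the proof of Arthur's stable multiplicity formula in \cite{arthur} (see also \cite[\S X.5, XI.6]{SFTT2}).
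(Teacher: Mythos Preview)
Your argument is correct and uses the same two ingredients as the paper's proof---the regularity hypothesis at $v$ to control $S_\psi^0$, and the non-vanishing of $\sigma(\overline{S}_\psi^0)$ to rule out a central torus---but you package them a bit more cleanly. The paper invokes the explicit product decomposition of $S_\psi^0$ into general linear, symplectic and special orthogonal factors (\cite[(1.4.8)]{arthur}): the hypothesis $\sigma(\overline{S}_\psi^0)\neq 0$ forces $\overline{S}_\psi^0$ (hence $S_\psi^0$, once one excludes the split $\mathrm{SO}_2$ case) to have finite center, eliminating the $\mathrm{GL}$ and $\mathrm{SO}_2$ factors, and then regularity forces $S_\psi^0$ to be commutative, eliminating the remaining symplectic and orthogonal factors. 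You bypass this decomposition entirely by observing directly that $S_\psi^0 \subseteq S_{\psi_v}^0$ sits inside a maximal torus, so $\overline{S}_\psi^0$ is itself a torus, and then the vanishing of $\sigma$ on groups with non-trivial connected center finishes at once. Your route is slightly more conceptual and avoids appealing to the explicit shape of $S_\psi^0$; the paper's route is more hands-on but makes the structure of $S_\psi^0$ visible. One small point worth making explicit in your write-up: the identification of $\overline{S}_\psi^0$ with the image of $S_\psi^0$ (hence a torus) uses that $S_\psi$, as a linear algebraic group, has finitely many components; the paper handles the borderline split $\mathrm{SO}_2$ case separately, though as you implicitly note it is trivial there anyway.
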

\begin{proof}
  We may assume that $G'$ is not a split special orthogonal group in dimension
  $2$, because in this case every parameter is discrete.
  Under this assumption the group
  $Z(\widehat{G'})^{\mathrm{Gal}(\overline{\Q}/\Q)}$ is finite.

  The neutral connected component $S_\psi^0$ of the centralizer $S_\psi$ of
  $\widetilde{\psi}_{G'}$ in $\widehat{G'}$ decomposes naturally as a product of
  general linear, symplectic and special orthogonal groups over $\C$ (see
  \cite[(1.4.8)]{arthur}).
  The assumption $\sigma(\overline{S}_\psi^0) \neq 0$ implies that
  $\overline{S}_\psi^0$ has finite center, hence so does $S_\psi^0$ since
  $Z(\widehat{G'})^{\mathrm{Gal}(\overline{\Q}/\Q)}$ is finite, and so no
  general linear group and no special orthogonal in dimension $2$ appears in
  this product decomposition of $S_\psi^0$.
  The assumption on $\psi_v$ implies that $S_\psi^0$ is commutative, and so the
  symplectic and special orthogonal factors composing $S_\psi^0$ are all
  trivial.
  This implies that the complex reductive group $S_\psi$ is finite and so $\psi$
  is discrete.
\end{proof}

\begin{propapp} \label{prop:Sdisc_discrete}
  Let $\psi' = (\psi'_1, \psi'_2) \in \widetilde{\Psi}(H, \widetilde{\nu})$ be
  such that for each $i \in \{1,2\}$ the factor
  $\sigma(\overline{S}_{\psi'_i}^0)$ appearing in \textup{\cite[Theorem
  4.1.2]{arthur}} does not vanish.
  Then the parameter $\psi := \Xi(\psi') \in \widetilde{\Psi}(G,
  \widetilde{\nu})$ is discrete.
\end{propapp}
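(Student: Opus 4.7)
The plan is to reduce the discreteness of $\psi = \Xi(\psi')$ to Lemma A.1 applied separately to each factor $\psi'_1, \psi'_2$, and then combine. The engine is the regularity at $\infty$ of the infinitesimal character $\widetilde{\nu}$: because $G(\R)$ is compact, $\pi_\infty$ is finite-dimensional and its Harish-Chandra infinitesimal character is strongly regular, so viewed as a conjugacy class in the Lie algebra of $\widehat{G}$, the neutral centralizer of $\widetilde{\nu}$ is contained in a maximal torus. Equivalently, the eigenvalues of $\mathrm{Std}_G(\widetilde{\nu})$ are pairwise distinct (subject to the usual $\pm$-symmetry forced by the classical type of $\widehat{G}$).

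Next I would exploit the compatibility $\mathrm{Std}_G \circ {}^L\xi \simeq \mathrm{Std}_{H_1} \oplus \mathrm{Std}_{H_2}$ recalled in the appendix, applied at $v=\infty$: the multiset of eigenvalues of $\mathrm{Std}_G(\psi_\infty)$ is the disjoint union of the eigenvalues of $\mathrm{Std}_{H_i}((\psi'_i)_\infty)$ for $i=1,2$. Pairwise distinctness on the $G$-side then passes to distinctness on each $H_i$-side, which is exactly the regularity of $(\psi'_i)_\infty$ required by Lemma A.1. Combined with the standing hypothesis $\sigma(\overline{S}_{\psi'_i}^0)\neq 0$, that lemma yields that each $\psi'_i$ is discrete (the split $\mathrm{SO}_2 \simeq \mathrm{GL}_1$ case being treated trivially as in the proof of Lemma A.1).

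Unpacking, each discrete $\psi'_i$ has the form $\bigoplus_{k \in I_i}\pi_{i,k}[d_{i,k}]$ with pairwise distinct pairs $(\pi_{i,k},d_{i,k})$ and each $\pi_{i,k}$ self-dual of the sign type compatible with $\widehat{H_i}$. Hence $\psi = \bigoplus_{i,k}\pi_{i,k}[d_{i,k}]$ has only self-dual summands, so $J_\psi = \emptyset$ automatically. The only remaining obstruction to discreteness of $\psi$ would be a pair $(\pi,d)$ common to $\psi'_1$ and $\psi'_2$; but such a repetition would make each of the $d$ corresponding eigenvalues in $\mathrm{Std}_G(\psi_\infty)$ occur with multiplicity $\geq 2$, contradicting the pairwise distinctness established in the first paragraph. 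Hence every component of $\psi$ has multiplicity one, and $\psi \in \widetilde{\Psi}_{\mathrm{disc}}(G)$.

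The main conceptual obstacle is setting up the correct compatibility of standard representations under the endoscopic embedding ${}^L\xi$ and being careful that the "regularity" used in Lemma A.1 is indeed equivalent to distinctness of Std-eigenvalues in our classical setting; once this is in hand, the same regularity of $\widetilde{\nu}$ simultaneously feeds Lemma A.1 and rules out collisions between the constituents of $\psi'_1$ and $\psi'_2$, which is precisely what the multiplicity-freeness requires.
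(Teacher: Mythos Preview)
Your route differs from the paper's. Both proofs start by applying Lemma~A.1 to each factor $\psi'_i$ (the needed regularity of $(\psi'_i)_\infty$ does follow from that of $\psi_\infty$, as you say). After that, you argue combinatorially via $\mathrm{Std}$-eigenvalues that $J_\psi=\emptyset$ and no summand is repeated, while the paper stays with centralizer groups: regularity of $\nu$ gives $S_\psi\subset T$ for some maximal torus $T$ of $\widehat{G}$; since $s\in S_\psi\subset T$ one gets $T\subset Z_{\widehat{G}}(s)^0=\xi(\widehat{H})$, hence $S_\psi=\xi(S_{\psi'})$; finiteness of $S_{\psi'}/Z(\widehat{H})^{\mathrm{Gal}(\overline{\Q}/\Q)}$ (Lemma~A.1 on each factor) together with ellipticity of $\mathfrak{e}$ then gives that $S_\psi/Z(\widehat{G})^{\mathrm{Gal}(\overline{\Q}/\Q)}$ is finite.

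Your combinatorial argument has a gap in the even orthogonal case. For $\dim V=2r$ the $\mathrm{Std}$-eigenvalues of a \emph{regular} $\widetilde{\nu}$ are $\pm a_1,\dots,\pm a_r$ with the $a_i$ pairwise distinct in absolute value; this allows the eigenvalue $0$ with multiplicity~$2$ whenever $a_r=0$ (e.g.\ for the trivial representation). So ``pairwise distinctness'' is not literally true, and your multiplicity argument does not by itself rule out a common summand $\chi[1]$ in $\psi'_1$ and $\psi'_2$ with $\chi$ a quadratic Hecke character --- this is exactly the case where the repeated summand contributes only the eigenvalue~$0$. To close the gap: for $G$ even orthogonal each $\widehat{H_i}$ is also even orthogonal, so if $\chi[1]$ occurs in $\psi'_i$ the remaining summands of $\psi'_i$ have odd total $\mathrm{Std}$-dimension; being self-dual they have negation-symmetric eigenvalue multisets and therefore contribute at least one further~$0$. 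Hence each $\psi'_i$ already yields two zeros, and $\psi_\infty$ would have at least four, contradicting regularity of $\widetilde{\nu}$. The paper's centralizer argument sidesteps this bookkeeping entirely.
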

\begin{proof}
  The parameter $\widetilde{\psi}_G$ can be realized as ${}^L \xi \circ
  \widetilde{\psi'}_H$.
  Thanks to the fact that $\nu$ is regular we know that $S_\psi$ is contained in
  a maximal torus of $\widehat{G}$.
  The semisimple element $s$ of $\widehat{G}$ occurring in $\mathfrak{e}$
  belongs to $S_\psi$.
  In particular $S_\psi$ is contained in $\xi(\widehat{H})$ and so it is
  equal to $\xi(S_{\psi'})$.
  Thanks to the previous lemma applied to each factor of $\psi'$ we know that
  $S_{\psi'} / Z(\widehat{H})^{\mathrm{Gal}(\overline{\Q}/\Q)}$ is finite.
  By ellipticity of $\mathfrak{e}$ we also know that
  \[  \xi(Z(\widehat{H})^{\mathrm{Gal}(\overline{\Q}/\Q)})
  /Z(\widehat{G})^{\mathrm{Gal}(\overline{\Q}/\Q)} \]
  is finite.
  It follows that $S_\psi / Z(\widehat{G})^{\mathrm{Gal}(\overline{\Q}/\Q)}$ is
  finite.
\end{proof}

\begin{remapp}\label{rem:diffcpctmult}
  In \textup{\cite[\S 4]{taibi_cpctmult}} this property was deduced from the
  fact that the local parameter $\psi_\infty$ is discrete because it is
  Adams-Johnson, which uses the fact that $\nu$ is \emph{algebraic} regular.
  The above argument is more global in nature and seems more robust.
\end{remapp}

Let $\widetilde{\Psi}_{G-\mathrm{disc}}(H, \widetilde{\nu})$ be the set of
$\psi' \in \widetilde{\Psi}(H, \widetilde{\nu})$ such that $\Xi(\psi')$ is
discrete.
We clearly have $\widetilde{\Psi}_{G-\mathrm{disc}}(H, \widetilde{\nu}) \subset
\widetilde{\Psi}_{\mathrm{disc}}(H, \widetilde{\nu})$.
Combining \eqref{eq:STF}, \eqref{eq:Sdisc_coarse_exp}, \cite[Theorem
4.1.2]{arthur} and Proposition \ref{prop:Sdisc_discrete} we obtain
\begin{equation} \label{eq:STF_psi}
  \tr \left( f \,\middle|\, \mathcal{A}(G, \widetilde{\nu}) \right) =
  \sum_{\mathfrak{e} = (H, \mathcal{H}, s, \xi)} \iota(\mathfrak{e}) \sum_{\psi'
  \in \widetilde{\Psi}_{G-\mathrm{disc}}(H, \widetilde{\nu})} m_{\psi'}
  |\mathcal{S}_{\psi'}|^{-1} \Lambda_{\psi'}(f^H).
\end{equation}
where $\Lambda_{\psi'}$ is the stable linear form that Arthur associates to
$\psi'$, which decomposes as a product over all places $v$ of $\Q$ of stable
linear forms $\Lambda_{\psi'_v}$.
The precise definitions of the positive integers $m_{\psi'}$ and
$|\mathcal{S}_{\psi'}|$, defined as product over the factors $H_i$, do not
matter for the purpose of this appendix.
By the characterization of $\Lambda_{\psi'}$ using twisted endoscopy for general
linear groups, using the twisted fundamental lemma \cite{LFT1}, \cite{LFT2} and
the fundamental lemma we have
\[ \Lambda_{\psi'}(f^H) = \prod_{v \in S} \Lambda_{\psi'_v}(f^H_v) \times
\prod_{p \not\in S} \mathrm{Sat}(f_v)({}^L \xi(c(\psi'_v))) \]
if $\psi'$ is unramified away from $S$, and $\Lambda_{\psi'}(f^H) = 0$
otherwise.
Here $\mathrm{Sat}$ is the Satake isomorphism between the unramified Hecke
algebra $\mathrm{H}(G(\Q_p), \underline{G}(\Z_p))$ and the algebra of algebraic
functions on $\widehat{G} \rtimes \mathrm{Frob}_p$ which are invariant under
conjugation by $\widehat{G}$.
As a consequence the stabilization of the trace formula can be refined by
families of Satake parameters as follows.
Let $\widetilde{x}^S = (\widetilde{x}_p)_{p \not\in S}$ be a family of
$\{1,\widehat{\theta}\}$-orbits of semisimple $\widehat{G}$-conjugacy classes in
$\widehat{G} \rtimes \mathrm{Frob}_p$.
This family corresponds to a character of
$\widetilde{\mathrm{H}}^S_\mathrm{unr}(G)$ (see Remark \ref{heckeSOV}).
Let
\[ \mathcal{A}(G, \widetilde{\nu}, \widetilde{x}^S) \subset \mathcal{A}(G,
\widetilde{\nu})^{\prod_{p \not\in S} \underline{G}(\Zp)} \]
be the eigenspace for this character.
For any $f_S = \prod_{v \in S} f_v \in \widetilde{\mathrm{H}}_S(G)$ we have
\begin{align} \label{eq:STF_Satake}
  & \tr \left( f_S \,\middle|\, \mathcal{A}(G, \widetilde{\nu}, \widetilde{x}^S)
  \right) \\
  =\ & \sum_{\mathfrak{e} = (H, \mathcal{H}, s, \xi)} \iota(\mathfrak{e})
  \sum_{\psi' \in \widetilde{\Psi}_{G-\mathrm{disc}}(H, \widetilde{\nu},
  \widetilde{x}^S) } m_{\psi'} |\mathcal{S}_{\psi'}|^{-1} \prod_{v \in S}
  \Lambda_{\psi'_v}(f^H_v), \nonumber
\end{align}
where $\widetilde{\Psi}_{G-\mathrm{disc}}(H, \widetilde{\nu}, \widetilde{x}^S)$
is the set of $\psi' \in \widetilde{\Psi}_{G-\mathrm{disc}}(H, \widetilde{\nu})$
which are unramified away from $S$ and such that for any prime number $p \not\in
S$ the orbit $c_p(\Xi(\psi')) = {}^L \xi(c_p(\psi'))$ is equal to
$\widetilde{x}_p$.
Now for any prime number $p \not\in S$ take $\widetilde{x}_p$ equal to the
$\{1,\widehat{\theta}\}$-orbit of the Satake parameter of $\pi_p$.
We claim that $f_S$ can be chosen so that the left-hand side of
\eqref{eq:STF_Satake} is non-zero.
We can choose $f_\infty \in \widetilde{\mathrm{H}}(G(\R))$ such that for any
irreducible continuous (finite-dimensional) representation $\sigma$ of $G(\R)$
we have
\[ \tr \sigma(f_\infty) = \begin{cases}
  1 & \text{ if } \sigma \simeq \pi_\infty \text{ or } \sigma \simeq
    \pi_\infty^{\theta_\infty},\\
  0 & \text{ otherwise}.
\end{cases} \]
For $p$ a prime number in $S$ take $f_p \in \widetilde{\mathrm{H}}(G(\Qp),
\underline{G}(\Zp))$ to be the characteristic function of a compact open
subgroup $K_p$ of $G(\Qp)$ such that we have $\theta_p(K_p) = K_p$ and
$\pi_p^{K_p} \neq 0$.
With this choice of $f_S$ the left-hand side of \eqref{eq:STF_Satake} is a
positive integer.
In particular the double sum on the right-hand side is not empty, i.e.\ there
exists an elliptic endoscopic datum $\mathfrak{e} = (H, \mathcal{H}, s, \xi)$
for $G$ which is unramified away from $S$ and $\psi' \in
\widetilde{\Psi}_{G-\mathrm{disc}}(H, \widetilde{\nu}, \widetilde{x}^S)$.
Letting $\psi := \Xi (\psi')$ concludes the proof of Theorem
\ref{thm:existence_psi}.

\begin{remapp}
The same argument proves a more general statement:
$\Q$ could be replaced by an arbitrary number field $F$, $G$ could be any
inner form of a quasi-split symplectic or special orthogonal group over $F$,
and $\pi$ any discrete automorphic representation of $G$ such that for some Archimedean place $v$ of $F$ the
infinitesimal character of $\pi_v$ is regular (or a similar assumption at a
finite place).\ps
\end{remapp}

{\footnotesize

}

\end{document}